\newtheorem{thm}{Theorem}[section]
\newtheorem{lem}[thm]{Lemma}
\newtheorem{prop}[thm]{Proposition}
\theoremstyle{definition}
\theoremstyle{remark}
\newtheorem{rem}[thm]{Remark}
\newtheorem*{acknow}{Acknowledgments}
\numberwithin{equation}{section}
\newcommand{\eps}{\varepsilon}
\renewcommand{\citet}[1]{\textsuperscript{\cite{#1}}}
\newcommand{\paren}[1]{\left(#1\right)}      
\newcommand{\set}[1]{\left\{#1\right\}}
\newcommand{\abs}[1]{\left\vert#1\right\vert}        
\newcommand{\Real}[1]{\mathbb{#1}}
\newcommand{\trace}[1]{\mathrm{tr}\left(#1\right)}
\renewcommand{\Im}{\mathrm{Im}}
\renewcommand{\Re}[1]{\mathrm{Re}\left\{#1\right\}}
\newcommand{\Det}[1]{\mathrm{det}\left(#1\right)}
\begin{document}

\title[Universality for products of random matrices]{Universality for products of random matrices I: Ginibre and truncated unitary cases}

\author{Dang-Zheng Liu} \address{Key Laboratory of Wu Wen-Tsun Mathematics, Chinese Academy of Sciences, School of Mathematical Sciences, University of Science and Technology of China, Hefei 230026, P.R.~China} \email{dzliu@ustc.edu.cn}

\author{Yanhui Wang} \address{Department of Mathematics, Harbin Institute of Technology, Harbin 150001, P. R. China} \email{yh\_wang@hit.edu.cn}

 \subjclass[2010]{60B20; 41A60}
 \keywords{Ginibre matrices; Multivariate saddle
point method; Product of random matrices; Truncated unitary matrices; Universality}
\date{\today}
\begin{abstract}
Recently, the joint  probability density functions of complex eigenvalues for products of   independent complex Ginibre  matrices
 have been explicitly derived as  determinantal point processes. We   express truncated series coming from   the   correlation kernels as multivariate integrals with singularity  and   investigate saddle point method for such a type of integrals. As an application, we prove that the eigenvalue correlation functions  have the same   scaling limits as those of the single complex Ginibre ensemble, both in the bulk and at the edge of the spectrum.  We also prove that the similar results  hold true for   products of independent truncated unitary matrices.   
\end{abstract}
\maketitle

\small
\tableofcontents
\normalsize

 \newpage
\section{Introduction and main results}

\subsection{Introduction}

Recently, there has been a   surge of interest   in   statistical properties  of eigenvalues for products of  independent random matrices with normal or more general iid entries in the literature, see \cite{burda2013free}, \cite{forrester2013lyapunov} and references therein for applications of such products. In particular, in a series of papers \cite{akemann2012universal,akemann2014universal,akemann2014permanental} Akemann and his coworkers showed that eigenvalues for products of matrices taken from   complex Ginibre ensembles and truncated unitary matrices form determinantal point processes; see also \cite{adhikari2013determinantal,ipsen2014weak} for mixed products of Ginibre matrices,  truncated unitary matrices and the inverses. These open up possibility of asymptotic analysis for local   eigenvalue statistics; see \cite{akemann2015suvey} for a recent survey. The goal of the present paper is to   investigate saddle point method for   a class  of multivariate  integrals with singularity. As applications,  in both  the complex Ginibre and truncated unitary cases, we  prove the pointwise  convergence of    one-point correlation functions   to some density with a single annulus support (see \cite{feinberg1997non} and \cite{guionnet2011single} for the  \textit{single ring phenomenon}) and also evaluate the bulk and edge scaling limits. A forthcoming paper \cite{wang2014} is devoted to studying global and local eigenvalue statistics for a mixed product of Ginibre matrices, truncated unitary matrices and their inverses.

  The principal subject of this paper is to study  two types of products of random matrices    defined as follows.

 \textbf{(i) Products of    complex Ginibre   matrices.}   It refers to the product of independent  induced complex Ginibre ensembles    $X^{(m)} = X_{m} X_{m-1} \cdots X_{1}$  where each $N\times N$ matrix  $X_k$ has the joint probability  density  proportional to
 \begin{equation}  \big(\Det{X^{*}_{k}X_{k}}\big)^{a_k} e^{ - \trace{X^{*}_{k}X_{k}}}. \label{inducedginibre} \end{equation}
 When all $a_k$ are non-negative integers, such a product $X^{(m)}$ of square matrices   can be realized as the product of rectangular matrices with   normal  entries where $a_k$ represents the difference between dimensions. See \cite{fischmann2012induced} for the induced Ginibre ensemble and \cite{ipsen2014weak} for the equivalence of the  two kinds of products.

  \textbf{(ii) Products of   truncated unitary   matrices}. It refers to the product of independent  truncated unitary matrices  $X^{(m)} = X_{m} X_{m-1} \cdots X_{1}$  where each $N\times N$ matrix  $X_k$ has the joint probability  density  proportional to
  \begin{equation}
    \big(\Det{X^{*}_{k}X_{k}}\big)^{a_k} \big(\Det{I_{N}-X^{*}_{k}X_{k}}\big)^{L_k-N}1_{\{  I_{N}-X^{*}_{k}X_{k}>0 \}}, \label{truncatedunitary}
  \end{equation}
  where all $L_{k} \geq N$. In the special case where all $a_k=0$, $X_{k}$ can be treated as the top left $N\times N$ sub-block $X_{k}$ of the unitary group $U(N+L_k)$ with its Haar measure (in this case if some $L_k<N$,  the density of the truncated matrix $X_{k}$ is different in form from \eqref{truncatedunitary} since the later is no longer applicable.), see e.g. \cite{akemann2014universal} and  the discussion in Appendix A therein.    More generally, when all $a_k$ are non-negative integers such a product $X^{(m)}$ of square matrices   can be realized as the product of truncated rectangular sub-block from unitary matrices  where $a_k$ represents the difference between dimensions, see \cite{ipsen2014weak}.      The  product has been studied in \cite{adhikari2013determinantal,akemann2014universal,ipsen2014weak} and  the joint eigenvalue density can be found therein. 

  A fundamental problem in random matrix  theory is to determine the limiting spectral measure  of the   empirical spectral distribution of a random matrix ensemble as the matrix size tends to infinity.  In the non-Hermitian case the famous \textit{circular law} for the complex Gaussian matrix was first shown by Ginibre \cite{ginibre1965statistical};  see also Mehta \cite{mehta1967random}.
  For the general   case when entries are iid complex variables the circular law was described by Girko \cite{girko1984circular}, and was rigorously proved by Bai \cite{bai1997circular}, G\"{o}tze and Tikhomirov \cite{gotze2010circular}, Pan and Zhou  \cite{pan2010circular},  and Tao and Vu \cite{tao2008random,tao2010random2}. In particular, Tao and Vu \cite{tao2010random2} established the law with the minimum assumption that the entry  has finite variance. More recently, the limit law for    products of $m$ independent complex square matrices was   originally considered by Burda, Janik and Waclaw \cite{burda2010spectrum} in  the Gaussian case, and in the iid case then by G\"{o}tze and Tikhomirov \cite{gotze2010asymptotic},  O'Rourke and Soshnikov \cite{o2011products},  and in the elliptic case by O'Rourke, Renfrew, Soshnikov and Vu \cite{o2015products}. As for the product of rectangular matrices, see  \cite{adhikari2013determinantal, burda2010eigenvalues, ipsen2014weak} for the Gaussian case and \cite{Tikhomirov2011Asymptotic,gotze2015Asymptotic} for  the  general case.


  Another fundamental problem in random matrix  theory is to prove the universality of local eigenvalue statistics.  In the non-Hermitian case (see \cite{khoruzhenko2011} for a detailed review), the scaling limits in the bulk and at the  edge are first computed in the complex Ginibre ensemble, see \cite{borodin2009ginibre}, \cite{forrester1999exact}, \cite{forrester2010log} and \cite{mehta1967random} for more details. Very recently, the same limits have been proven to be true  by  Fischmann etc. \cite{fischmann2012induced} in the induced complex Ginibre ensemble, and by Tao and Vu \cite{tao2015random} in the iid case where entries match  moments with the complex Ginibre ensemble to fourth order (Four moment theorem has been established therein). When turning to the product of random matrices, Akemann and  Burda \cite{akemann2012universal} calculated scaling limits in the bulk and at the edge  for products of $m$ independent complex Ginibre ensembles with each $a_k=0$ in \eqref{inducedginibre}.  In particular, a new universal limit at the origin is found, see \cite{osborn2004universal} for $m=2$ and \cite{akemann2012universal} for any fixed $m$.

  Another situation is the truncation of random unitary matrices described   in \cite{zyczkowski2000truncations}, see \cite{dong2012circular,jiang2009approximation,petz2005large} for the limiting spectral measure and \cite{zyczkowski2000truncations} for local scaling limits. Products of such   truncated unitary matrices  have been  investigated   as determinantal processes, based on which the  limit law for eigenvalues is obtained, see e.g.  \cite{adhikari2013determinantal,akemann2014universal,ipsen2014weak}.   Also the scaling limits in the bulk and at the edge  have been evaluated in \cite{akemann2014universal} when  each $a_k=0$  and all the truncations $L_k$ are equal  in \eqref{truncatedunitary}; the case of  $L_{k} < N$ is also treated therein.

\subsection{Main results}

 The joint density function of eigenvalues for the product of  induced complex Ginibre ensembles $X^{(m)}=X_mX_{m-1}\cdots X_1$ with $X_k$ distributed according to \eqref{inducedginibre} reads off \cite{adhikari2013determinantal,akemann2012universal,akemann2014universal}
    \begin{equation}
      P_{N}(z_1,\ldots,z_N) = C \prod_{l=1}^{N} w_{m}(z_{l}) \prod_{1 \leq j<k  \leq N} \abs{z_{k} - z_{j}}^{2},
    \end{equation}
    with respect to the Lebesgue measure on $\Real{C}^{N}$    denoted by $\mu(d z_1)\times \cdots \times \mu(d z_N)$.
Here the constant
    \begin{equation*}
      C= \frac{1}{N!} \prod_{l=0}^{N-1} \prod_{k=1}^{m} \frac{\Gamma(a_{k} + 1)}{\Gamma(a_{k} + l + 1)},
    \end{equation*}
    and the weight function can be expressed in terms of the Meijer G-functions as
    \begin{equation}
      w_{m}(z) = \frac{1}{\pi} \prod_{k=1}^{m} \frac{1}{\Gamma(a_{k}+1)} G_{0,m}^{m,0}(\abs{z}^{2} | a_{1}, \cdots, a_{m}).
    \end{equation}
The weight function $w_{m}(z)$ can also be given by the high-dimensional integral
    \begin{equation}\label{productWeightFunctionSpecifiedbyGinibre}
      \begin{split}
        w_{m}(z)&=\abs{z}^{2a_{m}} \frac{2^{m-1}}{\pi} \prod_{k=1}^{m} \frac{1}{\Gamma(a_{k}+1)} \int_{\Real{R}_{+}^{m-1}} e^{-p(r)} q(r) d^{m-1}r,
      \end{split}
    \end{equation}
    where
    \begin{equation}\label{pforWeightFunction}
      p(r) = p(r_{1}, \cdots, r_{m-1}) = \sum_{k=1}^{m-1} r_{k}^{2} + \frac{\abs{z}^{2}}{r_{1}^{2} \cdots r_{m-1}^{2}},
    \end{equation}
    and
    \begin{equation}\label{qforWeightFunction}
      q(r) = q(r_{1}, \cdots, r_{m-1}) = \prod_{k=1}^{m-1} r_{k}^{2(a_{k}-a_{m})-1},
    \end{equation}
    cf.  \cite{adhikari2013determinantal,akemann2014permanental}.
    This integral representation  proves to be suitable  for asymptotic analysis as $N$ goes to infinity.

As a determinantal  point process the kernel function reads off
  \begin{equation}\label{kernelFunctionofProductInducedGaussianWeight}
    K_{N}(z, z') = \sqrt{w_{m}(z)w_{m}(\overline{z'})} \sum_{l=0}^{N-1} \prod_{k=1}^{m} \frac{\Gamma(a_{k} + 1)}{\Gamma(a_{k} + l + 1)} (z \overline{z'})^{l}.
  \end{equation}
  Actually, noticing the orthogonal relations \cite{akemann2014permanental}
  \begin{equation}
    \int_{\Real{C}} z^{j} \bar{z}^{l} w_{m}(z) \mu(dz) = \delta_{j,l} \prod_{k=1}^{m} \frac{\Gamma(a_{k}+l+1)}{\Gamma(a_{k}+1)},
  \end{equation}
 and using Dyson's integral formula \cite{forrester2010log}, we have
  \begin{equation*}
    P_{N}(z_1,\ldots,z_N) = \frac{1}{N!} \Det{K_{N}(z_{j}, z_{k})}_{1 \leq j, k \leq N}
  \end{equation*}
  and further get  the $n$-point  correlation functions
  \begin{align}
    R_{N,n}(z_{1}, \cdots, z_{n}) :&= \frac{N!}{(N-n)!}\int_{\Real{C}}\cdots \int_{\Real{C}} P_N(z) \,\mu(d z_{n+1})\times \cdots \times \mu(d z_N)\nonumber\\
    &=\Det{K_{N}(z_{j}, z_{k})}_{1 \leq j, k \leq n}.\label{correlationfunction}
  \end{align}


One of our main results concerns local universality for products of independent Ginibre matrices with fixed parameters, see Theorem \ref{limitdensityfixed} in Section \ref{ginibrefixedsection}  for the pointwise convergence  of one-point correlation functions. In the case where all $a_k=0$ those have been  investigated by Akemann and  Burda \cite{akemann2012universal}.
\begin{thm} \label{univeralitytheoremfixed} For fixed $a_{1}, \ldots, a_{m} > -1$, let   $\rho = \frac{1}{\sqrt{m}} \abs{u}^{-(m-1)/m}$ and
     introduce rescaling variables      \begin{equation*}
      z_{k} = u + \frac{v_{k}}{\rho \sqrt{N}}, \quad k = 1, \cdots, n.
    \end{equation*}
 Then for $0 < \abs{u} < 1$,
         \begin{multline} \label{scalingLimitBulk}
          \lim_{N \to \infty} N^{n(m-1)} \rho^{-2n} R_{N,n}\big(N^{m/2}z_1,\ldots, N^{m/2}z_n\big) \\= \Det{\frac{1}{\pi}e^{-\frac{1}{2} (\abs{v_{k}}^{2} + \abs{v_{j}}^{2} - 2v_{k}\overline{v_{j}})}}_{1\leq k, j \leq n}
         \end{multline}
           while for $u= e^{i\phi}$ with $0\leq \phi<2\pi$,
        \begin{multline}
     \label{scalingLimitEdge}\lim_{N \to \infty} N^{n(m-1)}  \rho^{-2n} R_{N,n}\big(N^{m/2}z_1,\ldots, N^{m/2}z_n\big)
             \\=  \Det{\frac{1}{2\pi}e^{-\frac{1}{2} (\abs{v_{k}}^{2} + \abs{v_{j}}^{2} - 2v_{k}\overline{v_{j}})} \mathrm{erfc}\big(\frac{e^{i\phi}\overline{v_{j}} + e^{-i\phi}v_{k}}{\sqrt{2}}\big)}_{1\leq k, j \leq n}.
        \end{multline}
   Moreover, both hold true uniformly for  $v_{1}, \cdots, v_{n}$ in any compact subset of $\mathbb{C}$.
  \end{thm}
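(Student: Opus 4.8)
The plan is to reduce both limits to the asymptotics of the correlation kernel \eqref{kernelFunctionofProductInducedGaussianWeight} and then read them off from the determinant \eqref{correlationfunction}. The starting point is the standard remark that a determinant $\det[K_N(z_k,z_j)]$ is unchanged if $K_N(z,z')$ is replaced by $\frac{g_N(z)}{g_N(z')}K_N(z,z')$ for any nonvanishing $g_N$; hence, writing $\zeta_k:=N^{m/2}z_k=N^{m/2}\bigl(u+v_k/(\rho\sqrt N)\bigr)$, it is enough to find such a gauge factor for which
\[
  \lim_{N\to\infty} N^{m-1}\rho^{-2}\,\frac{g_N(\zeta_k)}{g_N(\zeta_j)}\,K_N(\zeta_k,\zeta_j)=\mathbb K(v_k,v_j)
\]
uniformly for $v_k,v_j$ in a fixed compact subset of $\Real{C}$, where $\mathbb K(v,w)=\tfrac1\pi e^{-\frac12(|v|^2+|w|^2-2v\overline{w})}$ when $0<|u|<1$ and $\mathbb K(v,w)=\tfrac1{2\pi}e^{-\frac12(|v|^2+|w|^2-2v\overline{w})}\,\mathrm{erfc}\bigl((e^{i\phi}\overline{w}+e^{-i\phi}v)/\sqrt2\bigr)$ when $u=e^{i\phi}$; continuity of $\det$ then gives \eqref{scalingLimitBulk} and \eqref{scalingLimitEdge}. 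By \eqref{kernelFunctionofProductInducedGaussianWeight}, $K_N(\zeta_k,\zeta_j)=\sqrt{w_m(\zeta_k)\,w_m(\overline{\zeta_j})}\;S_N(\zeta_k,\zeta_j)$ with $S_N(z,z'):=\sum_{l=0}^{N-1}\prod_{i=1}^m\frac{\Gamma(a_i+1)}{\Gamma(a_i+l+1)}(z\overline{z'})^l$, so the analysis splits into the large-argument behavior of the weight and of this truncated series.

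For the weight I would apply the multivariate saddle point method to the integral representation \eqref{productWeightFunctionSpecifiedbyGinibre}. The phase $p(r)$ of \eqref{pforWeightFunction} has a single critical point $r_1^2=\dots=r_{m-1}^2=|z|^{2/m}$ in $\Real{R}_+^{m-1}$, with critical value $m|z|^{2/m}$, and the singular term $|z|^2/(r_1^2\cdots r_{m-1}^2)$ makes the integrand decay at the singular locus $\{\,r_i=0\,\}$; after the substitution $r_i^2=|z|^{2/m}t_i$, which exhibits the singularity and reduces $p$ to $|z|^{2/m}$ times a fixed phase, a steepest-descent/Laplace estimate yields $w_m(z)=(1+o(1))\,\kappa_m(|z|)\,e^{-m|z|^{2/m}}$ as $|z|\to\infty$, where $\kappa_m$ is an explicit algebraic factor depending on the $a_i$, with the estimate uniform in the range we need. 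Substituting $|z|^2=N^m|z_k|^2$, so that $|z|^{2/m}=N(|z_k|^2)^{1/m}$, and expanding $(|z_k|^2)^{1/m}$ in powers of $N^{-1/2}$ about $|u|^2$ splits $\log\sqrt{w_m(\zeta_k)\,w_m(\overline{\zeta_j})}$ into an $O(N)$ part not involving $v_k,v_j$, an $O(\sqrt N)$ part, an $O(1)$ part, a part of order $\log N$, and $o(1)$.

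For the truncated series I would follow the route announced in the abstract: writing each reciprocal Gamma factor by a Hankel contour integral turns $S_N(\zeta_k,\zeta_j)$ into an $m$-fold integral of $\prod_i\bigl(e^{t_i}t_i^{-a_i-1}\bigr)$ against $\sum_{l=0}^{N-1}\bigl(\xi/(t_1\cdots t_m)\bigr)^l$, with $\xi:=\zeta_k\overline{\zeta_j}=N^m z_k\overline{z_j}$; this is a multivariate integral with a singularity along $t_1\cdots t_m=\xi$, and the truncation at $l=N-1$ is visible as a factor $\bigl(\xi/(t_1\cdots t_m)\bigr)^N$. Its saddle sits at $t_i=\xi^{1/m}\approx N|u|^{2/m}$, competing with an endpoint contribution at $t_i=N$; comparison of the two exponents shows that for $0<|u|<1$ the endpoint term is exponentially smaller---the truncation is then asymptotically invisible---so $S_N(\zeta_k,\zeta_j)=(1+o(1))\,\lambda_m(\xi)\,e^{m\xi^{1/m}}$ for an explicit algebraic $\lambda_m$. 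Multiplying by $\sqrt{w_m(\zeta_k)w_m(\overline{\zeta_j})}$: the $O(N)$ parts of the exponents cancel exactly; the $O(\sqrt N)$ part is purely imaginary and of the form $F(\zeta_k)-F(\zeta_j)$, so it is removed by $g_N=e^{-F}$; the algebraic factors, together with $N^{m-1}\rho^{-2}$, produce the constant $\tfrac1\pi$; and the $O(1)$ part collapses to $-\tfrac12(|v_k|^2+|v_j|^2-2v_k\overline{v_j})$---this is precisely where the choice $\rho=\tfrac1{\sqrt m}|u|^{-(m-1)/m}$ enters. This establishes \eqref{scalingLimitBulk}. When $|u|=1$ one has $\xi^{1/m}\approx N$, so the saddle of $S_N$ lies at the very endpoint of summation: the Gaussian peak of width $\asymp\sqrt N$ in the summation index is cut near its center, at a signed distance $\asymp\sqrt N\,(e^{i\phi}\overline{v_j}+e^{-i\phi}v_k)$, and the surviving partial Gaussian equals $\tfrac12\mathrm{erfc}\bigl((e^{i\phi}\overline{v_j}+e^{-i\phi}v_k)/\sqrt2\bigr)$ times the full one; carrying this through (now with $\rho=1/\sqrt m$) multiplies the bulk answer by this factor, giving \eqref{scalingLimitEdge} with the normalization $\tfrac1{2\pi}$, consistent with $\mathrm{erfc}(0)=1$ and the edge density being half the bulk value.

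The main obstacle is carrying out the two multivariate saddle point analyses uniformly in the presence of the boundary singularity: one has to deform the $(m-1)$- and $m$-dimensional contours through the relevant critical point while staying uniformly away from $\{\,r_i=0\,\}$, respectively $\{\,t_i=0\,\}$, and keep the error terms uniform for $v_1,\dots,v_n$ in a compact set; the genuinely delicate point is the transition as $|u|\to1$, where the saddle of $S_N$ collides with the truncation endpoint, which must be handled by estimates uniform through the collision so that the bulk formula degenerates continuously into the edge formula with its $\mathrm{erfc}$ factor. Once the kernel asymptotics are available with this uniformity, forming the $n\times n$ determinant and passing to the limit is routine.
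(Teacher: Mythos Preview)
Your outline is correct and is essentially the paper's own strategy: gauge away a unimodular factor, do Laplace on the weight \eqref{productWeightFunctionSpecifiedbyGinibre}, rewrite $S_N$ via $m$ Hankel integrals and the geometric sum, and run a multivariate saddle point analysis, with the edge $\mathrm{erfc}$ appearing when the saddle meets the truncation. One point to sharpen: the singularity that matters in the $S_N$ integral is not $\{t_i=0\}$ but the pole along $t_1\cdots t_m=\xi$, and your own saddle $t_i=\xi^{1/m}$ sits \emph{exactly} on it; the paper handles this by a change of variables that pins the pole at $t_m=1$ and then treats the resulting integral as a principal value, developing Theorems~\ref{laplaceMethod1} and~\ref{steepestDecentonEdge} (Sokhotskyi--Plemelj plus Laplace) as the general tool---this is the precise mechanism behind your ``saddle versus endpoint'' heuristic, and in particular the bulk answer comes not from the truncation term being exponentially smaller but from the two PV pieces combining with opposite signs.
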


The next two of our main results concern  limiting  eigenvalue density and  local universality for products of independent Ginibre matrices with varying parameters.
  In order to use easily,  for $r_{\mathrm{in}}>0$ we introduce  a  function    \begin{equation}  \label{characteristicfuncion1} \chi_{\set{u:r_{\textrm{in}}<\abs{u} < r_{\textrm{out}}}}(z)=\begin{cases}
          1  , & r_{\mathrm{in}}<\abs{z}<r_{\mathrm{out}} \\
          1/2, &  \abs{z}=r_{\mathrm{in}}, r_{\mathrm{out}}\\
        0, &  otherwise,
        \end{cases} \end{equation}
        while for  $r_{\mathrm{in}}= 0$ we define
        \begin{equation}  \label{characteristicfuncion2} \chi_{\set{u:\abs{u} < r_{\textrm{out}}}}(z)=\begin{cases}
          1  , &  \abs{z}<r_{\mathrm{out}} \\
          1/2, &  \abs{z}=r_{\mathrm{out}}\\
        0, &  otherwise.
        \end{cases} \end{equation}

        Note that in the present paper we are not concerned with the asymptotic behavior at zero.  As a matter of fact, when $m>1$ the situation  is different from the single Ginibre ensemble, since in the former case the origin might be the hard edge of the spectrum (cf. Theorem \ref{limitdensityfixed}  in Section \ref{ginibrefixedsection} below). Besides, the scaling limits at the origin  have been obtained in \cite{akemann2012universal,akemann2014permanental}. Viewed from this perspective, the evaluation of the functions  at zero  in \eqref{characteristicfuncion1} and \eqref{characteristicfuncion2} is insignificant, cf. Theorem \ref{limitdensityvarying} below.

\begin{thm} \label{limitdensityvarying}
     For  the weight function $w_{m}(z)$   given by (\ref{productWeightFunctionSpecifiedbyGinibre}), suppose that $a_{1} = \delta_{1} N, \cdots, a_{m} = \delta_{m} N$  with $\delta_1,\ldots,\delta_m\geq 0$. For $z\neq 0$, let $\xi_{m}(z)$ be the largest real root of algebraic  equation in $x$
      \begin{equation}
        \prod_{k=1}^{m} (\delta_{k} - \delta_{m} + \abs{z}^{2}x) - \abs{z}^{2} = 0.
      \end{equation} Then  the limiting eigenvalue density
      \begin{equation}
        \begin{split}
          R_{1}(z) &:= \lim_{N \to \infty} N^{m-1}R_{N,1}(N^{m/2}z)\\
          &= \frac{1}{\pi \abs{z}^{2}} \frac{1}{ \sum_{k=1}^{m} \frac{1}{\delta_{k} - \delta_{m} + \abs{z}^{2} \xi_{m}(z)}} \chi_{\set{u:\sqrt{\delta_{1} \cdots \delta_{m}} < \abs{u} < \sqrt{(1+\delta_{1}) \cdots (1+\delta_{m})}}}(z)
        \end{split}
      \end{equation}
     holds true for any complex $z\neq 0$.
    \end{thm}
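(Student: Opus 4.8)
Write $R_{N,1}(z)=K_{N}(z,z)=w_{m}(z)\sum_{l=0}^{N-1}c_{l}\abs{z}^{2l}$ with $c_{l}=\prod_{k=1}^{m}\Gamma(a_{k}+1)/\Gamma(a_{k}+l+1)$. After replacing $z$ by $N^{m/2}z$ (so $\abs{z}^{2}$ becomes $N^{m}\abs{z}^{2}$) and putting $a_{k}=\delta_{k}N$, the quantity to be analysed is
\begin{equation*}
  N^{m-1}R_{N,1}(N^{m/2}z)=N^{m-1}\,w_{m}(N^{m/2}z)\,\sum_{l=0}^{N-1}c_{l}(N^{m}\abs{z}^{2})^{l}.
\end{equation*}
The plan is to obtain sharp Laplace-type asymptotics for each of the two factors after Stirling's formula and to see that the exponentials and all powers of $N$ cancel. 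Both saddle points are governed by a single quantity: let $t_{*}=t_{*}(z)$ be the largest real root of $\prod_{k=1}^{m}(\delta_{k}+t)=\abs{z}^{2}$. Putting $\tau=\abs{z}^{2}\xi_{m}(z)-\delta_{m}$ transforms the defining equation of $\xi_{m}$ into $\prod_{k}(\delta_{k}+\tau)=\abs{z}^{2}$, so $\tau=t_{*}$ and $\delta_{k}-\delta_{m}+\abs{z}^{2}\xi_{m}(z)=\delta_{k}+t_{*}$ for every $k$. Since $t\mapsto\prod_{k}(\delta_{k}+t)$ is increasing for $t\ge0$ with values $\prod_{k}\delta_{k}$ at $t=0$ and $\prod_{k}(1+\delta_{k})$ at $t=1$, we have $t_{*}\in(0,1)$ precisely on the open annulus in $\chi$.

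For the weight, use the integral representation \eqref{productWeightFunctionSpecifiedbyGinibre}: the scaling $r_{k}=\sqrt{N}\,s_{k}$ turns $p(r)$ into $N(\sum_{k}s_{k}^{2}+\abs{z}^{2}/(s_{1}^{2}\cdots s_{m-1}^{2}))$, and combining with Stirling for each $\Gamma(\delta_{k}N+1)$ and the factor $\abs{N^{m/2}z}^{2a_{m}}$ yields
\begin{equation*}
  w_{m}(N^{m/2}z)\sim\frac{2^{m-1}}{\pi(2\pi N)^{m/2}\sqrt{\delta_{1}\cdots\delta_{m}}}\,e^{NA(z)}\int_{\Real{R}_{+}^{m-1}}e^{-Ng(s)}\,\frac{ds_{1}\cdots ds_{m-1}}{s_{1}\cdots s_{m-1}},
\end{equation*}
with $A(z)=\delta_{m}\log\abs{z}^{2}+\sum_{k}(\delta_{k}-\delta_{k}\log\delta_{k})$ and $g(s)=\sum_{k}s_{k}^{2}+\abs{z}^{2}/(s_{1}^{2}\cdots s_{m-1}^{2})-\sum_{k}(\delta_{k}-\delta_{m})\log s_{k}^{2}$. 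Since $z\neq0$, $g\to+\infty$ on the boundary of $\Real{R}_{+}^{m-1}$, so the minimiser $s_{*}$ is interior and solves $s_{*,k}^{2}-(\delta_{k}-\delta_{m})=\abs{z}^{2}/(s_{*,1}^{2}\cdots s_{*,m-1}^{2})$, hence $s_{*,k}^{2}=\delta_{k}+t_{*}$. A direct computation gives $\nabla^{2}g(s_{*})=4I+4(\delta_{m}+t_{*})vv^{\top}$ with $v_{k}=(\delta_{k}+t_{*})^{-1/2}$, so by the matrix determinant lemma
\begin{equation*}
  \Det{\nabla^{2}g(s_{*})}=4^{m-1}(\delta_{m}+t_{*})\sum_{k=1}^{m}\frac{1}{\delta_{k}+t_{*}}.
\end{equation*}
This rank-one structure is exactly the source of the factor $\bigl(\sum_{k}(\delta_{k}-\delta_{m}+\abs{z}^{2}\xi_{m}(z))^{-1}\bigr)^{-1}$ in the theorem; the $(m-1)$-dimensional Laplace expansion now gives the leading behaviour of $w_{m}(N^{m/2}z)$.

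For the sum, Stirling applied to $c_{l}(N^{m}\abs{z}^{2})^{l}=\prod_{k}\bigl(\Gamma(\delta_{k}N+1)/\Gamma((\delta_{k}+l/N)N+1)\bigr)(N^{m}\abs{z}^{2})^{l}$ shows that the $l$-th term equals, up to the slowly varying factor $\prod_{k}\sqrt{\delta_{k}/(\delta_{k}+l/N)}$, the value $e^{Nh(l/N)}$ with $h(t)=mt+\sum_{k}(\delta_{k}\log\delta_{k}-(\delta_{k}+t)\log(\delta_{k}+t))+t\log\abs{z}^{2}$, a strictly concave function with $h'(t)=\log(\abs{z}^{2}/\prod_{k}(\delta_{k}+t))$, maximised at $t_{*}$. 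Euler--Maclaurin and a one-dimensional Laplace estimate give, when $t_{*}\in(0,1)$, $\sum_{l=0}^{N-1}c_{l}(N^{m}\abs{z}^{2})^{l}\sim\sqrt{\delta_{1}\cdots\delta_{m}/\abs{z}^{2}}\,\sqrt{2\pi N}\,\bigl(\sum_{k}(\delta_{k}+t_{*})^{-1}\bigr)^{-1/2}e^{Nh(t_{*})}$. Multiplying the two factors, the powers of $N$ combine as $N^{m-1}\cdot N^{-m/2}\cdot N^{-(m-1)/2}\cdot N^{1/2}=1$ (from the prefactor, the $m$ Stirling ratios in the weight, the $(m-1)$-fold Laplace integral, and the Laplace of the sum, respectively); the exponential rate is $A(z)-g(s_{*})+h(t_{*})$, which vanishes by $s_{*,k}^{2}=\delta_{k}+t_{*}$ and $\abs{z}^{2}=\prod_{k}(\delta_{k}+t_{*})$; and the surviving constants telescope, again via those relations, to $\frac{1}{\pi\abs{z}^{2}}\bigl(\sum_{k}(\delta_{k}+t_{*})^{-1}\bigr)^{-1}$, the asserted value of $R_{1}(z)$ on the annulus. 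For general $z\neq0$ the identity $A(z)-g(s_{*})=-h(t_{*})$ persists, so the exponential rate of $N^{m-1}R_{N,1}(N^{m/2}z)$ equals $\max_{t\in[0,1]}h(t)-h(t_{*})\le0$, with equality iff $t_{*}\in[0,1]$; hence the limit is $0$ off the closed annulus, while on the two bounding circles $t_{*}\in\{0,1\}$ the peak of the summand lies at an endpoint of $0\le l\le N-1$, the one-dimensional Laplace estimate contributes half a Gaussian, and the limit is $\tfrac12$ of the bulk value --- precisely $\chi$.

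The main obstacle is the careful treatment of the truncated sum: justifying Euler--Maclaurin and the Laplace estimate with explicit control of the tail $l\ge N$, and in particular analysing the crossover in which the peak $l\approx t_{*}N$ reaches the cutoff $l=N$. This crossover is the ``singularity'' mentioned in the introduction; it dictates the case split above and produces the value $\tfrac12$ on the boundary circles. Further technical points: uniform error control in Stirling's formula (the $\Gamma$-arguments grow linearly in $N$); verifying that the dominant contribution comes from the saddle built from the \emph{largest} real root $\xi_{m}(z)$; and the determinant evaluation above. The behaviour near $z=0$, where the origin may be a hard edge of the spectrum, is deliberately excluded.
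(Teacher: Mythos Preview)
Your sketch is essentially correct, and it takes a route genuinely different from the paper's. The paper does \emph{not} apply Stirling term-by-term to the truncated sum. Instead it invokes Proposition~\ref{repsumprop}/Lemma~\ref{twopartslemmavarying} to rewrite $T_{N}$ as an $m$-fold contour integral with a singularity at $t_{m}=1$, coming from summing the geometric series $\sum_{l=0}^{N-1}(\cdot)^{l}$ inside the Hankel representation of $1/\Gamma$. The resulting principal-value integral is then analysed via the bespoke saddle-point Theorems~\ref{laplaceMethod1} and~\ref{steepestDecentonEdge}; the density follows by setting $v=v'=0$ in Lemmas~\ref{vWeight}, \ref{vSumP1}, \ref{vSumP2} and multiplying. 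Your approach---Stirling on each summand to reduce to $\sum_{l}e^{Nh(l/N)}$ with strictly concave $h$, then a one-dimensional Laplace/Euler--Maclaurin estimate---is more elementary and perfectly adequate for the \emph{density}, where $z=z'$ and every term is positive. The paper's heavier machinery is not overkill from its own point of view: the same integral representation and saddle theorems are reused to prove the local universality results (Theorems~\ref{univeralitytheoremfixed} and~\ref{univeralitytheoremvarying}), where one needs $K_{N}(z,z')$ with $z\neq z'$ and the summand is complex, so a direct Stirling/concavity argument is no longer available. Two small cautions on your write-up: the ``slowly varying factor'' $\prod_{k}\sqrt{\delta_{k}/(\delta_{k}+l/N)}$ and the prefactor $1/\sqrt{\delta_{1}\cdots\delta_{m}}$ are ill-defined when some $\delta_{k}=0$ (then $\Gamma(\delta_{k}N+1)=1$ exactly and no Stirling is used); the formal cancellation you rely on still goes through, but you should phrase it so. Also, at the inner edge $t_{*}=0$ the half-Gaussian picture only applies when all $\delta_{k}>0$; otherwise the inner radius is $0$ and there is no inner edge to analyse, consistent with the theorem's indicator.
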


\begin{thm} \label{univeralitytheoremvarying}  With the same notation as in Theorem \ref{limitdensityvarying}, let
      \begin{equation}
        \rho = \frac{1}{\abs{u} \sqrt{\sum_{k=1}^{m} \frac{1}{\delta_{k} - \delta_{m} + \abs{u}^{2} \xi_{m}(u)}}},
      \end{equation}
            and introduce rescaling variables
      \begin{equation}
        z_{k} = u + \frac{v_{k}}{\rho \sqrt{N}}, \quad k=1, \cdots, n.
      \end{equation}
      Then  the following hold true uniformly for  $v_{1}, \cdots, v_{n}$ in any compact subset of $\mathbb{C}$.
      \begin{enumerate} [label = $\mathrm{(\arabic*)}$]
        \item \textbf{Bulk limit}.   \label{vScalingLimitBulk} For  $\sqrt{\delta_{1} \cdots \delta_{m}} < \abs{u} < \sqrt{(1+\delta_{1}) \cdots (1+\delta_{m})}$,
          \begin{multline}
            \lim_{N \to \infty} N^{n(m-1)} \rho^{-2n} R_{N,n}\big(N^{m/2}z_1, \ldots,N^{m/2}z_n\big) \\= \Det{\frac{1}{\pi}e^{-\frac{1}{2} (\abs{v_{k}}^{2} + \abs{v_{j}}^{2} - 2v_{k}\overline{v_{j}})}}_{1\leq k, j \leq n}.
          \end{multline}
        \item  \textbf{Inner edge}. For  $\delta_{1}, \ldots, \delta_{m}>0$ and $u = \sqrt{\delta_{1} \cdots \delta_{m}}e^{i\phi}$ with $0\leq \phi<2\pi$,
          \begin{multline}
             \lim_{N \to \infty}N^{n(m-1)} \rho^{-2n} R_{N,n}\big(N^{m/2}z_1, \ldots,N^{m/2}z_n\big) \\
               = \Det{\frac{1}{2\pi}e^{-\frac{1}{2} (\abs{v_{k}}^{2} + \abs{v_{j}}^{2} - 2v_{k}\overline{v_{j}})} \mathrm{erfc}\big(-\frac{e^{i\phi}\overline{v_{j}} + e^{-i\phi}v_{k} }{\sqrt{2} }\big)}_{1\leq k, j \leq n}.
                     \end{multline}
        \item  \textbf{Outer   edge}. For  $u = \sqrt{(1+\delta_{1}) \cdots (1+\delta_{m})} e^{i\phi}$ with $0\leq \phi<2\pi$,
          \begin{multline}
           \lim_{N \to \infty} N^{n(m-1)} \rho^{-2n} R_{N,n}\big(N^{m/2}z_1, \ldots,N^{m/2}z_n\big) \\
               = \Det{\frac{1}{2\pi}e^{-\frac{1}{2} (\abs{v_{k}}^{2} + \abs{v_{j}}^{2} - 2v_{k}\overline{v_{j}})} \mathrm{erfc}
              \big(\frac{e^{i\phi}\overline{v_{j}} + e^{-i\phi}v_{k} }{\sqrt{2} }\big)
               }_{1\leq k, j \leq n}.
                   \end{multline}
      \end{enumerate}
    \end{thm}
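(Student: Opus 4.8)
The plan is to reduce the theorem to the scaling asymptotics of the scalar kernel \eqref{kernelFunctionofProductInducedGaussianWeight}, and to obtain the latter by a multivariate saddle point analysis of an integral representation of that kernel. By \eqref{correlationfunction} each rescaled $n$-point function equals the determinant of the rescaled kernel, and since the determinant in \eqref{correlationfunction} is invariant under a gauge transformation $K_N(z,z')\mapsto\frac{g(z)}{g(z')}K_N(z,z')$ with $g$ nonvanishing, it suffices to exhibit, for a suitable $N$-dependent choice of $g$, the pointwise limit of $N^{m-1}\rho^{-2}K_N\big(N^{m/2}z,N^{m/2}z'\big)$ with $z=u+v/(\rho\sqrt N)$ and $z'=u+v'/(\rho\sqrt N)$; the claimed determinant limits then follow by continuity of $n\times n$ determinants, and uniformity on compact sets of the kernel limit transfers to the correlation functions. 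Theorem \ref{limitdensityvarying} is the diagonal specialization $v=v'=0$, once one notes that $\rho^{2}=\big(\abs u^{2}\sum_{k}(\delta_k-\delta_m+\abs u^{2}\xi_m(u))^{-1}\big)^{-1}$ equals $\pi$ times the asserted one-point density.

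The first step is an integral representation of the kernel suited to saddle point analysis, which is the technical device announced in the introduction. Keep \eqref{productWeightFunctionSpecifiedbyGinibre} for the two weight factors $w_m(z)$, $w_m(\overline{z'})$, and for the truncated series in \eqref{kernelFunctionofProductInducedGaussianWeight} insert the Hankel-loop formula $1/\Gamma(\zeta)=\frac{1}{2\pi i}\int_{\mathcal H}e^{s}s^{-\zeta}\,ds$ into each factor $1/\Gamma(a_k+l+1)$ and sum the geometric series in $l$:
\begin{multline*}
\sum_{l=0}^{N-1}\prod_{k=1}^{m}\frac{\Gamma(a_k+1)}{\Gamma(a_k+l+1)}(z\overline{z'})^{l}
=\frac{\prod_{k=1}^{m}\Gamma(a_k+1)}{(2\pi i)^{m}}\int_{\mathcal H}\!\!\cdots\!\!\int_{\mathcal H}\prod_{k=1}^{m}\big(e^{s_k}s_k^{-a_k-1}\big)\\
\times\Big(\frac{1}{1-w}-\frac{w^{N}}{1-w}\Big)\,ds_1\cdots ds_m,\qquad w:=\frac{z\overline{z'}}{s_1\cdots s_m}.
\end{multline*}
Each of the two pieces on the right is a multivariate integral with a pole along the surface $s_1\cdots s_m=z\overline{z'}$ — the ``multivariate integral with singularity'' of the title. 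Substituting $a_k=\delta_kN$ and the rescaled arguments, and then rescaling $s_k\mapsto Ns_k$ (and the variables of \eqref{productWeightFunctionSpecifiedbyGinibre} by $\sqrt N$), the kernel becomes $N^{-(m-1)}$ times a bounded algebraic prefactor times a multivariate integral whose integrand is $e^{N\Phi}$ against the rational ``singular'' factor, where $\Phi$ assembles the Stirling exponents of the Gamma functions, the quadratic and inverse-product terms of $p(r)$ from \eqref{pforWeightFunction}, and $\log\abs z^{2}$.

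The heart of the proof is the saddle point analysis of this integral. Its critical-point equations reduce to $\prod_{k=1}^{m}(\delta_k+t)=\abs u^{2}$, that is, to the defining equation of $\xi_m$ after the substitution $t=\abs u^{2}\xi_m(u)-\delta_m$; the relevant root $t^{*}$ lies in $(0,1)$ exactly in the bulk annulus, equals $1$ on the outer circle, and equals $0$ on the inner circle, which is why the inner edge requires all $\delta_k>0$. In the bulk the saddle is interior and bounded away from the pole surface $s_1\cdots s_m=z\overline{z'}$, so the $w^{N}$-piece is exponentially negligible and a standard multidimensional Laplace expansion about the saddle applies to the $1/(1-w)$-piece: the $v$-shift of the saddle, re-expanded, reproduces precisely the exponent $v\overline{v'}-\frac12\abs v^{2}-\frac12\abs{v'}^{2}$ (any remaining $v$- or $v'$-only terms being absorbed into the gauge), while the Hessian determinant and the algebraic prefactors combine (together with the $v$-independent part of $e^{N\Phi}$) into $\rho^{2}/\pi$, the normalization $\rho$ of the statement being exactly the one that turns the coefficient of the bilinear term $v\overline{v'}$ into one. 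On each critical circle the saddle coalesces with the pole, equivalently with an endpoint of the $l$-summation; there both pieces contribute, and the boundary-layer scaling $l=O(\sqrt N)$ on the inner circle, respectively $l-(N-1)=O(\sqrt N)$ on the outer circle, converts the truncated sum into an incomplete Gaussian integral, i.e.\ a complementary error function, the sign inside $\mathrm{erfc}$ being $-$ when the saddle sits at the lower endpoint $l=0$ (inner edge) and $+$ when it sits at the upper endpoint $l=N-1$ (outer edge). Assembling the asymptotics of $\sqrt{w_m(z)w_m(\overline{z'})}$ with those of the truncated series, and observing that all the estimates are uniform for $\abs v,\abs{v'}$ bounded (the saddle, its Hessian, and the $v$-shift depending analytically on the data), yields the stated kernel limits uniformly on compacts, whence the theorem follows by passing to $n\times n$ determinants.

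The main obstacle is making this multivariate saddle point rigorous for the singular integral: deforming the Hankel loops (and the $\mathbb{R}_+^{m-1}$-contour of \eqref{productWeightFunctionSpecifiedbyGinibre}) through the saddle while keeping track of any residue picked up as the steepest-descent surface crosses the pole $s_1\cdots s_m=z\overline{z'}$, establishing exponential decay of the off-saddle tails uniformly in the rescaling parameters, and — near the two edges — controlling the coalescence of the saddle with the pole so as to produce the $\mathrm{erfc}$ uniformly in $v,v'$. A secondary, purely computational difficulty is the bookkeeping of the many non-exponential factors — Stirling corrections of the Gamma functions, the Hessian determinants of the several integrals, and the Jacobians of the rescalings — which must combine to leave exactly $\rho^{2}/(\pi N^{m-1})$ with no stray constant; in the diagonal case this is precisely what yields the closed form of $R_1(z)$ in Theorem \ref{limitdensityvarying}. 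The case $m=1$ degenerates to the classical Ginibre asymptotics and serves as a consistency check, so the genuinely new content lies in $m\ge2$, where the singular multivariate integral is unavoidable.
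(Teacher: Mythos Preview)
Your overall architecture matches the paper's: reduce to the kernel via the determinantal structure and gauge invariance, represent the truncated sum by $m$ Hankel integrals (this is Proposition~\ref{repsumprop}/Lemma~\ref{twopartslemmavarying}) and the weight by \eqref{productWeightFunctionSpecifiedbyGinibre} (Lemma~\ref{vWeight}), run a multivariate saddle-point analysis whose critical equation is the defining equation of $\xi_m$, and assemble. The paper packages the singular-integral step as two abstract principal-value results (Theorems~\ref{laplaceMethod1} and~\ref{steepestDecentonEdge}) which are then applied to the two pieces $p_1,p_2$ in Lemmas~\ref{vSumP1}--\ref{vSumP2}; the final proof is a two-line assembly with an explicit gauge $\psi_N$.

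Where your description departs from the paper is the mechanism in the bulk. You assert that the saddle is bounded away from the pole surface, the $w^N$-piece is exponentially negligible, and a \emph{standard} Laplace expansion applies to the $1/(1-w)$-piece. The paper does \emph{not} argue this way. After a change of variables it forces the dominant point $\tilde s$ of \emph{both} pieces to sit exactly on the singular hyperplane $t_m=1$; neither piece is discarded. Each is then evaluated by the principal-value Theorem~\ref{laplaceMethod1} (Sokhotski--Plemelj in the last variable), and each produces the \emph{same} leading term up to a sign: $-\mathrm{sign}(\delta_m-|u|^2\xi)$ for the $p_1$-piece and $-\mathrm{sign}(1+\delta_m-|u|^2\xi)$ for the $p_2$-piece. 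In the bulk these signs are opposite and the difference doubles; outside the annulus they agree and the difference cancels to leading order, which is how the indicator in Theorem~\ref{limitdensityvarying} appears. At the inner (resp.\ outer) edge the relevant normal derivative vanishes, Theorem~\ref{steepestDecentonEdge} replaces Theorem~\ref{laplaceMethod1}, and the sign becomes an $\mathrm{erf}$.

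Your ``interior saddle in $l$, hence tail negligible'' picture is correct as a heuristic on the sum side, but it does not translate into ``standard Laplace on the $1/(1-w)$-integral'': the free saddle of the exponent for that piece lies at $s_k=\delta_k$, where $|w|>1$ in the bulk, so the steepest-descent contour is on the wrong side of the pole and the integral does not equal the infinite sum without the very residue bookkeeping you flag as the main obstacle. Those residues are equal for the two pieces (since $(1-w^N)/(1-w)$ is regular at $w=1$) and cancel, after which one must still compare the two genuine saddle contributions. The paper's device of pinning the critical point to the pole and reading off a $\pm$ sign is precisely what replaces that bookkeeping and keeps the argument short; if you pursue your variant, that residue cancellation and the subsequent comparison of the two saddle values is the step you must actually carry out.
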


Similar results hold true for products of truncated unitary matrices, see Theorem \ref{limitdensityTr} for limiting eigenvalue density and Theorem \ref{univeralitytheoremTr} for local universality in Section \ref{truncatedproductsection}.
\begin{rem}
      We claim that the limiting eigenvalue density $R_{1}(z)$ given in Theorem \ref{limitdensityvarying} is  symmetric with respect to the parameters $\delta_{1}$, $\cdots$, $\delta_{m}$. Actually, for $z\neq 0$ and for given $j<m$, comparing the root $\xi_{j}(z)$  of the equation in $x$
      \begin{equation}
        \prod_{k=1}^{m} (\delta_{k} - \delta_{j} + \abs{z}^{2}x) - \abs{z}^{2} = 0
      \end{equation}
       and the root $\xi_{m}(z)$  of the equation
      \begin{equation}
        \prod_{k=1}^{m} (\delta_{k} - \delta_{m} + \abs{z}^{2}x) - \abs{z}^{2} = 0,
      \end{equation}
      there is an obvious relation
      \begin{equation}
        \xi_{j}(z) = \frac{\delta_{j} - \delta_{m}}{\abs{z}^{2}} + \xi_{m}(z).
      \end{equation}
    If $\xi_{m}(z)$ is chosen as the largest real root, then  $\xi_{j}(z)$ is the corresponding largest real root. In this case we know that the factor in the density function
      \begin{equation}
        \sum_{k=1}^{m} \frac{1}{\delta_{k} - \delta_{j} + \abs{z}^{2} \xi_{j}(z)}
      \end{equation}
      is independent of $j$.
    \end{rem}

The paper is organized as follows. Section \ref{saddlesection} is devoted to saddle point method for a class of multivariate principal value integrals. Two   main theorems are presented and will be used to tackle with  the large $N$ asymptotics for correlation functions in next sections.    In Section \ref{ginibrefixedsection}, the pointwise convergence of one-point correlation functions is proven and scaling limits in the bulk and at the edge are evaluated for products of complex Ginibre ensembles with fixed parameters $a_k$.  The similar results are proven to hold true for    products of complex Ginibre ensembles with  varying  parameters   and  for products of truncated unitary matrices respectively in Sections \ref{varyingginibre} and   \ref{truncatedproductsection}.  Appendix \ref{appendixa} gives explicit expression of the determinant and the inverse for   one class of matrices, which come up repeatedly in the present paper and also in the similar calculations of relevant problems.

\section{Asymptotics for multivariate integrals with singularity}\label{saddlesection}
 To prove the bulk and edge universality, it is sufficient for us to analyse asymptotic behaviour of  both the weight function and the finite sum in \eqref{kernelFunctionofProductInducedGaussianWeight}  in detail.  Laplace's approximation (see, e.g. \cite[p.495]{wong2001asymptotic}) can be used to evaluate asymptotics of the weight function, but it does not work for the finite sum part. Our aim is to tackle  the difficulty and develop  a general method
 of asymptotically evaluating  the $N$-term sum as $N\rightarrow \infty$.  In fact, the finite sum appears ubiquitously as a factor of the correlation kernel in random matrix theory, especially in the determinant  point processes associated with complex eigenvalues.
We will first express the finite sum as a multivariate integral with singularity  (see Section \ref{repfinitesum} below)  and then do  asymptotic analysis.
   In this section we devote ourselves to  steepest decent method for such a type of multivariate integrals with singularity and prove the following two theorems.  For convenience, we just consider the integrals with singularity at zero since  the general case  can be obtained with a translation.

The first result will play an important role in dealing with the universality problem  in the bulk  for complex eigenvalues of random matrices.
  \begin{thm}\label{laplaceMethod1} Let  $(\theta;t) = (\theta_{1}, \cdots, \theta_{m}; t)$ and $D = \gamma_{1} \times \cdots \times \gamma_{m+1}\subset T$, consider the integral   \begin{equation}
      I(\lambda) = \mathrm{P.V.} \int_{D} e^{-\lambda p(\theta; t)-\sqrt{\lambda}f(\theta;t)} \frac{q(\theta;t)}{t} d^{m}\theta \,dt,
    \end{equation}
in which $p(\theta; t), f(\theta;t)$ and $q(\theta; t)$ are independent  of the positive parameter $\lambda$, single-valued and holomorphic in a  domain $T \subset \Real{C}^{m+1}$.   Here each   $\gamma_{k}$   is  a smooth simple curve and  is independent of $\lambda$. Suppose that  $(\theta;t) = (0;0)$ is an interior point of $D$  such that the followings hold:
    \begin{enumerate}[label = $\mathrm{(\roman*)}$]
      \item\label{condition1} The first order partial derivatives
        \begin{equation}
          \alpha:= \frac{\partial p(0;0)}{\partial t} \neq 0,  \qquad   \eta := \nabla_{\theta}f(0;0),
        \end{equation}
        and
        \begin{equation}
          \nabla_{\theta} p(0; 0):= \paren{\frac{\partial p(0; 0)}{\partial \theta_{1}}, \cdots, \frac{\partial p(0; 0)}{\partial \theta_{m}}} = 0.
        \end{equation}
      \item\label{condition2} For the Hessian matrix $A$ of the first $m$ variables,
        \begin{equation}
          \Det{A} = \Det{\frac{\partial^{2} {p(0;0)}}{\partial \theta_{k} \partial \theta_{j}}}_{1 \leq k, j \leq m} \neq 0.
        \end{equation}
        \item\label{condition2.5} \label{condition2.5} There exists   $\lambda_0>0$ such that for every  $\lambda\geq \lambda_0$, $I(\lambda)$ converges absolutely throughout its range except for one neighbourhood of $(0;0)$.

      \item\label{condition3}   For every given $\varepsilon>0$ let  $D_{\eps} = \set{(\theta;t) \in D| \max\{\abs{\theta_1}, \ldots,\abs{\theta_m},\abs{t}\} < \varepsilon},$ there exists $\lambda_1>0$ such that
        $\varrho(\varepsilon,\lambda_1)>0$ where  \begin{multline}\varrho(\varepsilon,\lambda_1) =\\
         \inf\left\{\Re{p(\theta;t)-p(0;0)+\frac{f(\theta;t)-f(0;0)}{\sqrt{\lambda}}}: (\theta;t)\in D\backslash D_{\varepsilon}, \lambda\geq \lambda_1\right\}.\end{multline}

      \item\label{condition4} $q(0;0) \neq 0$.
    \end{enumerate}
    Then as $\lambda \rightarrow \infty$ we have
    \begin{equation}
      I(\lambda) \sim i\pi  (2\pi/\lambda)^{m/2} q(0;0) e^{-\lambda p(0;0)-\sqrt{\lambda} f(0;0)} e^{\frac{1}{2} \eta A^{-1} \eta^{T}}\frac{\epsilon(\alpha,A)}{\sqrt{\det{A}}},
    \end{equation}
    where $\epsilon(\alpha,A)$ equals to 1 or -1   depending  on the argument  of $\alpha$ and the choice of the sign of $\sqrt{\det A}$.
  \end{thm}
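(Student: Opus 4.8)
\emph{Overview.} The plan is to run a multivariate steepest-descent analysis in which the two blocks of variables concentrate at different scales: the $\theta$-variables at the usual saddle scale $\lambda^{-1/2}$, governed by the Hessian $A$, and the singular variable $t$ at the finer scale $\lambda^{-1}$, forced by the simple pole $1/t$ together with the non-vanishing derivative $\alpha=\partial p(0;0)/\partial t$ of \ref{condition1}. In the limit the rescaled integral factorizes into a Gaussian integral in $\theta$ and a Dirichlet-type principal-value integral in $t$; the latter accounts for the factor $i\pi$.

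\emph{Localization and the contours.} First I would discard the part of $D$ away from $(0;0)$: by \ref{condition3} the real part of $p(\theta;t)+\lambda^{-1/2}f(\theta;t)$ on $D\setminus D_\eps$ exceeds its value at $(0;0)$ by at least $\varrho(\eps,\lambda_1)>0$ for $\lambda\ge\lambda_1$, so together with the absolute-convergence hypothesis \ref{condition2.5} the contribution of $D\setminus D_\eps$ to $I(\lambda)$ is $O(e^{-\lambda\Re{p(0;0)}-\sqrt\lambda\Re{f(0;0)}-\lambda\varrho})$, exponentially smaller than the asserted main term. Letting $\eps\to0$ in \ref{condition3} one also gets $\Re{p(\theta;t)}\ge\Re{p(0;0)}$ throughout $D$; taking $\theta=0$ and expanding along $\gamma_{m+1}$ then shows, using $\alpha\ne0$, that $\gamma_{m+1}$ is tangent at $0$ to the anti-Stokes line $\set{t:\Re{\alpha t}=0}$, and similarly (using \ref{condition2}) that $\gamma_1\times\cdots\times\gamma_m$ is near $0$ an admissible steepest-descent contour for $\tfrac12\theta A\theta^{T}$.

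\emph{Expansion, rescaling, and the limit integral.} On $D_\eps$ I would expand $p(\theta;t)=p(0;0)+\alpha t+\tfrac12\theta A\theta^{T}+(\text{h.o.t.})$ --- the linear $\theta$-term vanishing by \ref{condition1} --- together with $f(\theta;t)=f(0;0)+\eta\cdot\theta+\beta t+(\text{h.o.t.})$, $\beta=\partial f(0;0)/\partial t$, and $q(\theta;t)=q(0;0)+(\text{h.o.t.})$, and substitute $\theta=\lambda^{-1/2}\phi$, $t=\lambda^{-1}\tau$, whence $d^{m}\theta\,dt/t=\lambda^{-m/2}\,d^{m}\phi\,d\tau/\tau$. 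Counting orders (with $\theta$ of size $\lambda^{-1/2}$ and $t$ of size $\lambda^{-1}$) the only surviving terms of the exponent are
\begin{equation*}
-\lambda p(0;0)-\sqrt\lambda f(0;0)-\tfrac12\phi A\phi^{T}-\eta\cdot\phi-\alpha\tau,
\end{equation*}
while the terms carrying $\beta$, the $\theta$-Hessian of $f$, and all cubic, quadratic-in-$t$ and mixed contributions are $O(\lambda^{-1/2})$ and drop out --- in particular $\beta$ plays no role --- and the amplitude tends to $q(0;0)\ne0$ by \ref{condition4}. Passing to the limit, $\lambda^{m/2}e^{\lambda p(0;0)+\sqrt\lambda f(0;0)}I(\lambda)$ converges to
\begin{equation*}
q(0;0)\paren{\int e^{-\frac12\phi A\phi^{T}-\eta\cdot\phi}\,d^{m}\phi}\paren{\mathrm{P.V.}\!\int_{\ell}\frac{e^{-\alpha\tau}}{\tau}\,d\tau},
\end{equation*}
where $\ell$ is the line through $0$ in the anti-Stokes direction of $\gamma_{m+1}$. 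The Gaussian integral equals $(2\pi)^{m/2}e^{\frac12\eta A^{-1}\eta^{T}}/\sqrt{\det A}$ up to the branch of $\sqrt{\det A}$, and parametrising $\ell$ by $\alpha\tau=\pm i\sigma$, $\sigma\in\Real{R}$, turns the second factor into the Dirichlet integral $\mathrm{P.V.}\!\int_{-\infty}^{\infty}\sigma^{-1}e^{\mp i\sigma}\,d\sigma=\mp i\pi$, the sign being fixed by $\Arg{\alpha}$ and the orientation of $\gamma_{m+1}$. Collecting the factors reproduces the claim, with $\epsilon(\alpha,A)=\pm1$ absorbing the two sign choices. (Equivalently, the $i\pi$ may be extracted by Sokhotski--Plemelj: deforming $\gamma_{m+1}$ to indent $t=0$ on the side $\Re{\alpha t}>0$ pulls out $i\pi$ times the residue $\int e^{-\lambda p(\theta;0)-\sqrt\lambda f(\theta;0)}q(\theta;0)\,d^{m}\theta$, which is an ordinary $m$-dimensional Laplace integral equal to the asserted main term, plus a remainder to be shown negligible.)

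\emph{The main obstacle.} The hard part will be the rigorous treatment of the principal value in this last passage. Since $\gamma_{m+1}$ is tangent to the anti-Stokes line, $|e^{-\lambda\alpha t}|$ decays along $\gamma_{m+1}$ only on the scale $|t|\sim\lambda^{-1/2}$, whereas $e^{-\lambda\alpha t}$ oscillates on the scale $|t|\sim\lambda^{-1}$; the factor $i\pi$ comes from the interaction of this oscillation with the pole (a Hilbert-transform effect) rather than from concentration, so the relevant integrals are only conditionally convergent and no naive dominated-convergence estimate suffices. I would split off the value of the slowly-varying amplitude at $t=0$, estimate the smooth remainder by an oscillatory (van der Corput / Riemann--Lebesgue) argument uniform in $\lambda$, and simultaneously control the effect of the curvature of $\gamma_{m+1}$ and of the higher-order terms in $p$ and $f$ on the rescaled contour. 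This is exactly where \ref{condition1} (forcing $\alpha\ne0$, hence the tangency and the $\lambda^{-1}$ scale) and \ref{condition3} (localizing and pinning down the admissible contour) enter essentially; once the principal value is under control, the $\theta$-integration is a routine multivariate Laplace method with the linear perturbation $\eta\cdot\theta$ handled by completing the square.
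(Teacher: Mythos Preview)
Your plan is sound and would work, but the paper handles the singular variable differently and thereby avoids precisely the obstacle you flagged. After the same localization to $D_\eps$, the paper does \emph{not} rescale $t$; instead it writes the Taylor expansion as $p(\theta;t)=p(0;0)+\alpha t\bigl(1+S(\theta;t)\bigr)+\tfrac12\theta(A+R(\theta))\theta^{T}$ and performs the \emph{nonlinear} change of variables $\theta'=\theta Q$ (with $QQ^{T}=A+R$) and $s=\alpha t\bigl(1+S(\theta;t)\bigr)$, which straightens the phase exactly: the exponent becomes $-\lambda\bigl(\tfrac12\theta'\theta'^{T}+s\bigr)$ with all higher-order $t$-dependence absorbed into the Jacobian factor $g(\theta';s)$. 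The image $\gamma'(\theta')$ of $\gamma_{m+1,\eps}$ is then completed to a \emph{closed} simple curve $\tilde\gamma$ through $0$, chosen so that $\Re{\tfrac12\theta'\theta'^{T}+s}$ stays bounded below by a positive constant on the added arc; this makes the completion exponentially negligible, and on the closed curve the Sokhotski--Plemelj formula gives $\mathrm{P.V.}\!\oint_{\tilde\gamma}e^{-\lambda s}g(0;s)\,s^{-1}\,ds=\pm i\pi\,g(0;0)$ outright, with no $\lambda$ left and no conditional convergence to justify. The remaining $\theta'$-integral is then a genuine Gaussian.

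So the difference is: you push $t$ to an infinite anti-Stokes line and extract the $i\pi$ from a Dirichlet integral (which, as you say, requires an oscillatory estimate uniform in $\lambda$ to control the error terms and the curvature of $\gamma_{m+1}$), whereas the paper keeps the $t$-contour finite, straightens the phase by a change of variable, closes the contour, and reads off $i\pi$ as a half-residue. Your approach is more in the spirit of classical stationary phase; the paper's is closer to a residue calculation and is technically lighter here. Your parenthetical Sokhotski--Plemelj alternative (indenting $\gamma_{m+1}$) is in the same spirit as the paper's, but the paper's device of first straightening the phase and then \emph{closing} rather than indenting is what makes the error control elementary.
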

  \begin{proof}  We proceed in two steps.

  \textbf{Step 1:}   special case where $f(\theta;t)=0$. First, the integral $I(\lambda)$  can be subdivided into two parts
    \begin{equation}
      I(\lambda) = \mathrm{P.V.} \int_{D_{\varepsilon}} (\cdot)+\mathrm{P.V.}  \int_{D \backslash D_{\varepsilon}} (\cdot):= I_{1}(\lambda) + I_{2}(\lambda).
    \end{equation}
    By Condition \ref{condition3}, for any given $\varepsilon > 0$  there exists a positive number $c$ such that $\Re{p(\theta;t)} \geq c + \Re{p(0;0)}$ for any $(\theta;t) \in D \backslash D_{\varepsilon}$. Therefore, by Condition \ref{condition2.5} and Condition \ref{condition3}
    \begin{align}\label{laplaceMethod1_Decay}
      \abs{I_{2}(\lambda)} &=\abs{\mathrm{P.V.} \int e^{-(\lambda-\lambda_0) p(\theta; t)} e^{-\lambda_0 p(\theta; t)} \frac{q(\theta;t)}{t} d^{m}\theta \,dt}\nonumber\\
      &\leq K e^{-(\lambda-\lambda_{0})(\Re{p(0;0)} + c)}
    \end{align}
    for some constant $K > 0$.

    If we set $$\beta=\left(\frac{\partial^{2} {p(0;0)}}{\partial \theta_{1} \partial t},\ldots,\frac{\partial^{2} {p(0;0)}}{\partial \theta_{m} \partial t}\right),$$ then Taylor expansion of $p(\theta; t)$ at $(0;0)$ gives
    \begin{equation}
      \begin{split}
        p(\theta;t&) = p(0;0) + \alpha t + \frac{1}{2} \theta A \theta^{T} + \frac{1}{2} \beta \theta^{T} t + \cdots \\
        &:= p(0;0) + \alpha t(1+S(\theta; t)) + \frac{1}{2} \theta (A+R) \theta^{T}.\label{pexpansion}
      \end{split}
    \end{equation}
    Here $R= (R_{kj}(\theta))$ is a symmetric matrix whose entries $R_{kj}(\theta)$ are power series of $\theta$ with $R_{kj}(0) = 0$, and $S(\theta;t)$ is a power series of $(\theta; t)$ with $S(0;0) = 0$.

    By Condition \ref{condition2}, we can choose $\varepsilon$ small enough such that the matrix $A+R$ is non-degenerate in the domain $D_{\varepsilon}$.
    Since $A+R$ is symmetric and non-degenerate, there exists a non-degenerate matrix $Q$ such that $A+R = QQ^{T}$. Take  change of variables $(\theta; t) \to (\theta';s)$ with $\theta' = \theta Q$ and $s = \alpha t(1+S(\theta;t))$,  then for sufficiently small $\varepsilon$ we have $\theta=\theta(\theta')$ and $t=t(s,\theta')$. Moreover, under the  mapping  if letting  $\Omega$ be the image of $ \{\theta \in \gamma_{1} \times \cdots \times \gamma_{m}|\max\{\abs{\theta_1}, \ldots,\abs{\theta_m}\} < \varepsilon\}$ and for given $\theta'$ letting $\gamma'(\theta')$ be  the image of $ \{t \in \gamma_{m+1}|\abs{t}< \varepsilon\}$, then
    \begin{equation}
      \begin{split}
        I_{1}(\lambda)
        &= e^{-\lambda p(0;0)} \int_{\Omega} e^{- \frac{1}{2} \lambda \theta' \theta'^{T}} d^{m}\theta' \paren{\mathrm{P. V.} \int_{\gamma'(\theta')} e^{-\lambda s} g(\theta';s) \frac{1}{s} ds}, \label{I1after}
      \end{split}
    \end{equation}
    where
    \begin{equation}\label{ffunction}
      g(\theta';s) = q(\theta;t) \Det{\frac{\partial \theta_{k}'}{\partial \theta_{j}}}^{-1}
      \frac{1+S(\theta;t)}
      {
      1+S(\theta;t)+t
      \frac{\partial S(\theta;t)}{\partial t}
      }.
    \end{equation}
     We emphasize that $\gamma'(\theta')$ is a smooth simple curve passing through $0$.

     On the other hand, let $t_{\pm}$ be the two endpoints of the curve  $ \{t \in \gamma_{m+1}|\abs{t}< \varepsilon\}$,
    by Condition \ref{condition3} there exists  $\eta_1>0$ such that $\Re{p(\theta;t_{\pm})-p(0;0)} \geq \eta_1$ for any $(\theta;t_{\pm}) \in D_{\varepsilon}$. Therefore, for given $\theta'$ if letting $s_{\pm}(\theta')$ be the image of $t_{\pm}$ under the mapping, there exists $\eta_2>0$ such that
    $\Re{\frac{1}{2} \theta'^{T} \theta' + s_{\pm}(\theta')} \geq  \eta_2$ for any $\theta' \in \Omega$.
    This shows that we can choose a simple closed curve  $\widetilde{\gamma} (\theta')\supset \gamma'(\theta')$ such that
    \begin{equation}
\Re{\frac{1}{2} \theta'^{T} \theta' + s} \geq  \eta_2 \label{positiveeta2}
\end{equation}
 holds for any $\theta'\in \Omega$ and $s\in \widetilde{\gamma} (\theta')\backslash \gamma'(\theta')$.

   Like in the estimate of \eqref{laplaceMethod1_Decay},  we know from \eqref{positiveeta2} that   substitution of $\gamma'(\theta')$ with $\widetilde{\gamma} (\theta') $ in \eqref{I1after} does not affect the leading order of $I_{1}(\lambda)$.
    Notice  \begin{equation*}
    \mathrm{P. V.} \int_{\widetilde{\gamma} (\theta')} e^{-\lambda s} g(\theta';s) \frac{1}{s} ds=\mathrm{P. V.} \int_{\widetilde{\gamma} } e^{-\lambda s} g(\theta';s) \frac{1}{s} ds,
    \end{equation*}
   where   $\widetilde{\gamma} $ is a smooth closed simple curve and is  independent of $\theta'$, we thus get
    \begin{equation}\label{laplaceMethod1_Domain}
      \begin{split}
        I_{1}(\lambda) &\sim e^{-\lambda p(0;0)} \int_{\Omega} e^{- \frac{1}{2} \lambda \theta'^{T} \theta'} d^{m}\theta' \, \mathrm{P. V.} \int_{\widetilde{\gamma}} e^{-\lambda s} g(\theta';s) \frac{1}{s} ds \\
      &  \sim e^{-\lambda p(0;0)} \int_{\Real{R}^{m}} e^{- \frac{1}{2} \lambda \theta'^{T} \theta'} d^{m}\theta'  \,\mathrm{P. V.} \int_{\widetilde{\gamma}} e^{-\lambda s} g(0;s) \frac{1}{s} ds \\
        &= i\pi  (2\pi/\lambda)^{m/2} q(0;0) e^{-\lambda p(0;0)} \frac{\epsilon(\alpha,A)}{\sqrt{\det{A}}}
      \end{split}
    \end{equation}
      where we have used  \eqref{ffunction} and the principle value integral
       \begin{equation*}
 \mathrm{P. V.} \int_{\widetilde{\gamma} } e^{-\lambda s} g(0;s) \frac{1}{s} ds=\pm i\pi g(0;0)
    \end{equation*}  by the Sokhotskyi-Plemelj formula \cite[Sect.14.1]{henrici1993applied}. Here the choice of $\pm$ depends on the orientation of integration path.  The sign of $\epsilon(\alpha,A)$ relies on both
    the orientation of integration path and the sign of $\sqrt{\det{A}}$.

    Combination of   \eqref{laplaceMethod1_Decay} and \eqref{laplaceMethod1_Domain} completes the proof of Theorem \ref{laplaceMethod1} in this case.

    \textbf{Step 2}: general case.  

    With \eqref{pexpansion} in mind the Taylor expansion of $f(\theta;t)$ at $(0;0)$ shows
     \begin{equation}
          f(\theta;t)-f(0;0)  =   \eta \theta^{T} +
         \frac{1}{2} \theta  \widetilde{R}(\theta)  \theta^{T}   +   t \widetilde{S}(\theta;t),
       \end{equation}
    where $\widetilde{R}(\theta)$ is a symmetric matrix whose entries are power series of $\theta$, and $\widetilde{S}(\theta;t)$ is a power series of $(\theta;t)$.

%

    Notice  Conditions \ref{condition2},  \ref{condition2.5} and   \ref{condition3}, following the approach in the special case and rescaling $\theta$ and $t$ respectively by $\frac{1}{\sqrt{\lambda}}$ and $\frac{1}{ \lambda}$ near zero, the desired result could be easily derived.
  \end{proof}

The second result will be used to tackle the universality problem  at the edge  for complex eigenvalues of random matrices.
  \begin{thm}\label{steepestDecentonEdge}
   Let  $t = (t_{1}, \cdots, t_{m}, t_{m+1})$ and $D = \gamma_{1} \times \cdots \times \gamma_{m+1}\subset T$, consider
    \begin{equation}
      I(\lambda) = \mathrm{P.V.} \int_{D} e^{-\lambda p(t)-\sqrt{\lambda} f(t)} \frac{q(t)}{t_{m+1}} d^{m+1}t,
    \end{equation}
    in which $p(t), f(t)$ and $q(t)$ are   single-valued and holomorphic in a  domain $T \subset \Real{C}^{m+1}$. Write $\eta: = (\widetilde{\eta},\eta_{m+1}) = \nabla f(0)$ where $\widetilde{\eta}$ is an $m$-dimensional vector.
    Suppose that  $t = 0$ is an interior point of $D$  such that the followings hold:
    \begin{enumerate}[label = $\mathrm{(\roman*)}$]
      \item\label{condition1'} The gradient of $p$ vanishes at $0$, i.e., $\nabla p(0) = 0$, or equivalently
        \begin{equation*}
          \frac{\partial p(0)}{\partial t_{1}} = \cdots = \frac{\partial p(0)}{\partial t_{m}} = \frac{\partial p(0)}{\partial t_{m+1}} =0.
        \end{equation*}
      \item\label{condition2'}  For the Hessian matrix $A$ of the first $m$ variables,
        \begin{equation}
          \Det{A} = \Det{ \frac{\partial^{2} p(0)}{\partial t_{k} \partial t_{j}}}_{1 \leq k, j \leq m} \neq 0,
        \end{equation}
               and the other second-order partial derivatives read off
        \begin{equation*}
          \beta = \paren{\frac{\partial^{2} p(0)}{\partial t_{1} \partial t_{m+1}}, \cdots, \frac{\partial^{2} p(0)}{\partial t_{m} \partial t_{m+1}}}, \qquad  \alpha = \frac{\partial^{2} p(0)}{\partial t_{m+1}^{2}}.
        \end{equation*}
       \item\label{condition2.5'} \label{condition2.5} There exists $\lambda_0>0$ such that for every $\lambda\geq \lambda_0$, $I(\lambda)$ converges absolutely  throughout its range except for one neighbourhood of $0$.

      \item\label{condition3'}For every given $\varepsilon>0$ let  $D_{\eps} = \set{t \in D| \max\{\abs{t_1}, \ldots,\abs{t_{m+1}}\} < \varepsilon},$ there exists $\lambda_1>0$ such that
        $\varrho(\varepsilon,\lambda_1)>0$ where  \begin{multline}\varrho(\varepsilon,\lambda_1) =\\
         \inf\left\{\Re{p(t)-p(0)+\frac{f(t)-f(0)}{\sqrt{\lambda}}}: t\in D\backslash D_{\varepsilon}, \lambda\geq \lambda_1\right\}.\end{multline}
      \item\label{condition4'} $q(0) \neq 0$.
    \end{enumerate}
    Then
    \begin{align}
        I(\lambda)   \sim   (2\pi/\lambda)^{m/2} q(0) & e^{-\lambda p(0)-\sqrt{\lambda} f(0)} e^{\frac{1}{2} \widetilde{\eta} A^{-1} \widetilde{\eta}^{T}} \nonumber\\
        &\times \,\frac{i\pi \epsilon(A)}{\sqrt{\det{A}}}\mathrm{erf}\big( \frac{
        i\eta_{m+1} - i\beta A^{-1} \widetilde{\eta}^{T}
        }{ \sqrt{2\alpha - 2\beta A^{-1} \beta^{T}}
        }\big)
    \end{align}
    where $\mathrm{erf}(z)$ denotes the error function, and $\epsilon(A)$ equals to $1$ or $-1$ depending on the choice of the sign of $\sqrt{\Det{A}}$.
  \end{thm}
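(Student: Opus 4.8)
The plan is to imitate the two-step proof of Theorem~\ref{laplaceMethod1}, the one structural difference being that here the singular variable $t_{m+1}$ is itself a critical direction of $p$. Indeed Condition~\ref{condition1'} asserts that \emph{all} first partials of $p$ vanish at $0$, so $t_{m+1}$ enters $p$ only quadratically, coupled to $\theta:=(t_{1},\dots,t_{m})$ through the row $\beta$. After the quadratic reduction the pole $1/t_{m+1}$ will therefore be multiplied by a genuine Gaussian in the $t_{m+1}$-direction rather than by a factor $e^{-\lambda\alpha t}$ as in Theorem~\ref{laplaceMethod1}, and the Sokhotski--Plemelj residue of that proof gets replaced by the Hilbert transform of a shifted Gaussian, which is exactly what manufactures the error function in the conclusion.

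First I would localize: split $I(\lambda)=I_{1}(\lambda)+I_{2}(\lambda)$ over $D_{\varepsilon}$ and $D\setminus D_{\varepsilon}$ and, exactly as in \eqref{laplaceMethod1_Decay}, use Conditions~\ref{condition2.5'} and~\ref{condition3'} to see that $I_{2}(\lambda)$ is exponentially smaller than the claimed leading term, so only $I_{1}(\lambda)$ matters. Then, since $\nabla p(0)=0$, Taylor's formula gives, in the spirit of \eqref{pexpansion},
\begin{equation*}
 p(t)=p(0)+\tfrac12\,\theta(A+R)\theta^{T}+\beta\theta^{T}t_{m+1}(1+S_{1})+\tfrac12\,\alpha\,t_{m+1}^{2}(1+S_{2}),
\end{equation*}
with $R=R(\theta)$ symmetric, $R(0)=0$, $S_{1},S_{2}$ power series vanishing at $0$, and $f(t)=f(0)+\widetilde\eta\,\theta^{T}+\eta_{m+1}t_{m+1}+(\text{quadratic and higher})$. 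By Condition~\ref{condition2'}, shrinking $\varepsilon$ if necessary, $A+R$ is invertible on $D_{\varepsilon}$. Completing the square in $\theta$ by the affine shift $\theta\mapsto\theta+t_{m+1}\beta A^{-1}$ — which leaves the coordinate $t_{m+1}$, hence the pole, untouched — turns the quadratic part of $p$ into $\tfrac12\theta'\theta'^{T}+\tfrac12 s^{2}$, where $\theta'=(\theta+t_{m+1}\beta A^{-1})Q$, $QQ^{T}=A+R$, and $s=\sqrt{\alpha-\beta A^{-1}\beta^{T}}\,t_{m+1}(1+\cdots)$; the factor $1/t_{m+1}$ becomes $1/s$ times a smooth nonvanishing Jacobian. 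Following \eqref{I1after}--\eqref{ffunction}, \eqref{positiveeta2} and \eqref{laplaceMethod1_Domain}, this brings $I_{1}(\lambda)$, up to exponentially small errors, to the form $e^{-\lambda p(0)}\int_{\Omega}e^{-\frac{\lambda}{2}\theta'\theta'^{T}}d^{m}\theta'\,\mathrm{P.V.}\int_{\widetilde\gamma}e^{-\frac{\lambda}{2}s^{2}}h(\theta';s)\,\frac{ds}{s}$, where $h$ collects the smooth Jacobian factors, the weight $q$, and the remaining $e^{-\sqrt\lambda f}$ factor, and $\widetilde\gamma$ is a fixed simple contour through $0$ built from $\gamma_{m+1}$ exactly as in Theorem~\ref{laplaceMethod1}.

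Next I would rescale $\theta'=\hat\theta/\sqrt\lambda$, $s=\tau/\sqrt\lambda$ near $0$: the quadratic-and-higher part of $f$ then contributes $O(\lambda^{-1/2})$ and drops out (mirroring Step~2 of the proof of Theorem~\ref{laplaceMethod1}), while the linear part of $-\sqrt\lambda f$ survives as the finite form $-\widetilde\eta'\hat\theta^{T}-c'\tau$, with $\widetilde\eta'=\widetilde\eta(Q^{T})^{-1}$ and $c'=(\eta_{m+1}-\beta A^{-1}\widetilde\eta^{T})/\sqrt{\alpha-\beta A^{-1}\beta^{T}}$ — the shift of $\eta_{m+1}$ to $\eta_{m+1}-\beta A^{-1}\widetilde\eta^{T}$ being precisely the cross term $-t_{m+1}\widetilde\eta A^{-1}\beta^{T}$ produced when completing the square. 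The $\hat\theta$-Gaussian gives $\int_{\Real{R}^{m}}e^{-\frac12\hat\theta\hat\theta^{T}-\widetilde\eta'\hat\theta^{T}}d^{m}\hat\theta=(2\pi)^{m/2}e^{\frac12\widetilde\eta A^{-1}\widetilde\eta^{T}}$ (using $\widetilde\eta'(\widetilde\eta')^{T}=\widetilde\eta(QQ^{T})^{-1}\widetilde\eta^{T}=\widetilde\eta A^{-1}\widetilde\eta^{T}$ and $\det Q=\pm\sqrt{\det A}$ to leading order), and there remains the one-dimensional principal value integral $\mathrm{P.V.}\int e^{-\frac12\tau^{2}-c'\tau}\frac{d\tau}{\tau}$, whose contour may be deformed to $\Real{R}$ by the Gaussian decay. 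To evaluate it I would complete the square in $\tau$ and differentiate in the parameter: the function $g(x):=\mathrm{P.V.}\int_{\Real{R}}e^{-\frac12(\tau+x)^{2}}\frac{d\tau}{\tau}$ satisfies $g(0)=0$ and, since $(\tau+x)/\tau=1+x/\tau$, the first-order ODE $g'(x)+x\,g(x)=-\sqrt{2\pi}$, whence $g(x)=-\sqrt{2\pi}\,e^{-x^{2}/2}\int_{0}^{x}e^{u^{2}/2}\,du$; evaluating at $x=c'$ and substituting $u=i\sqrt2\,w$ yields
\begin{equation*}
 \mathrm{P.V.}\int_{\Real{R}}e^{-\frac12\tau^{2}-c'\tau}\,\frac{d\tau}{\tau}=i\pi\,\mathrm{erf}\paren{\frac{i\eta_{m+1}-i\beta A^{-1}\widetilde\eta^{T}}{\sqrt{2\alpha-2\beta A^{-1}\beta^{T}}}}.
\end{equation*}
Collecting the scaling power $\sqrt\lambda\cdot\lambda^{-(m+1)/2}=\lambda^{-m/2}$, the Jacobian factor $1/\sqrt{\det A}$, the value $q(0)\ne0$ (Condition~\ref{condition4'}), the exponential $e^{-\lambda p(0)-\sqrt\lambda f(0)}$, and absorbing the orientation of $\widetilde\gamma$ together with the branch of $\sqrt{\det A}$ into $\epsilon(A)=\pm1$, gives the asserted asymptotics.

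The routine parts — localization, control of the Taylor remainders under the rescaling, and the construction and deformation of the pole contour — are essentially literal transcriptions of the proof of Theorem~\ref{laplaceMethod1}. The two genuinely new points, and where I expect to spend the most care, are: (a) performing the $\theta$-completion of the square so as to eliminate the off-diagonal Hessian block $\beta$ while keeping the simple pole pinned at the origin, which is exactly what produces the combinations $\alpha-\beta A^{-1}\beta^{T}$ and $\eta_{m+1}-\beta A^{-1}\widetilde\eta^{T}$ inside the error function; and (b) the evaluation of the Hilbert transform of the shifted Gaussian via the Dawson-type ODE above, where the error function is born and where the factors of $i$, $\sqrt2$ and the orientation sign must be tracked carefully.
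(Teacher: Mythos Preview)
Your proposal is correct and lands on the same answer by the same mechanism: localize, reduce to a Gaussian model with a simple pole in the $(m{+}1)$st variable, then separate the $m$ transverse Gaussian directions from the one-dimensional principal value integral via the Schur/block decomposition of the Hessian $B=\begin{pmatrix}A&\beta^{T}\\ \beta&\alpha\end{pmatrix}$.

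The paper organizes this slightly differently and more economically. Instead of first performing the $\theta$-completion of the square and the diagonalizing change $\theta'=(\theta+t_{m+1}\beta A^{-1})Q$, $s=\sqrt{\alpha-\beta A^{-1}\beta^{T}}\,t_{m+1}(1+\cdots)$ at the level of the full Taylor expansion (which forces you to track the remainder matrices $R,S_{1},S_{2}$ and the $t_{m+1}$-dependent image of $\gamma_{1}\times\cdots\times\gamma_{m}$), the paper simply rescales all $m{+}1$ variables by $1/\sqrt{\lambda}$ first. The higher-order terms then vanish in the limit, and one is left directly with the exact model integral $\mathrm{P.V.}\int_{\Real{R}^{m+1}}e^{-\frac12 tBt^{T}-t\eta^{T}}\,t_{m+1}^{-1}\,d^{m+1}t$, which is evaluated once and for all in a separate lemma by the block-LDU factorization of $B$; the residual one-dimensional principal value is computed there by expanding $e^{-c\,t_{m+1}}-e^{c\,t_{m+1}}$ in a power series and integrating term by term against the Gaussian, rather than by your Dawson-type ODE. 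Your route buys a closer structural parallel with Theorem~\ref{laplaceMethod1} (and your ODE evaluation of the Hilbert transform of a shifted Gaussian is a clean alternative), while the paper's route avoids the nonlinear change of variables altogether and packages the linear-algebra content into a reusable lemma.
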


  \begin{proof}
    As in the proof of Theorem \ref{laplaceMethod1}, the integral can be split   into
    \begin{equation}
      I(\lambda) = \mathrm{P.V.} \int_{D_{\varepsilon}} (\cdot)+\mathrm{P.V.}  \int_{D \backslash D_{\varepsilon}} (\cdot):= I_{1}(\lambda) + I_{2}(\lambda).
    \end{equation}
    By Condition \ref{condition3'}, for sufficiently small $\varepsilon > 0$ and sufficiently large $\lambda$,  there exists a positive number $c$ independent of $\lambda$ such that $$\Re{p(t)-p(0)+\frac{f(t)-f(0)}{\sqrt{\lambda}}}\geq c$$
    for any $t \in D \backslash D_{\varepsilon}$. Therefore, by Condition \ref{condition2.5}
    \begin{align}\label{laplaceMethod2Decay}
      \abs{I_{2}(\lambda)} \leq K e^{-(\lambda-\lambda_0)(\Re{p(0)} + c)-(\sqrt{\lambda}-\sqrt{\lambda_0})\,\Re{f(0)}}
    \end{align}
    for some constant $K > 0$.

    Next, we turn to  the integral $I_{1}(\lambda)$. Write \begin{equation*}
          B = \begin{pmatrix}
            A & \beta^{T} \\
            \beta & \alpha
          \end{pmatrix},
        \end{equation*}
    then we know from Conditions \ref{condition1'}, \ref{condition2'} and \ref{condition3'} that $\Re{B}>0$. Furthermore, the Taylor expansion of  $p(t)$ at $t=0$ reads off $$p(t)=p(0)+\frac{1}{2} t \big(B + R(t)\big) t^{T}$$
    where $R(t)$ is a symmetric matrix whose entries are power series of $t$ with $R(0)=0$.

   As in the proof of Theorem \ref{laplaceMethod1}, rescaling the variables $t$ by $\frac{1}{\sqrt{\lambda}}$ we have the leading  asymptotic form
    \begin{align}
              I&_{1}(\lambda)\sim q(0) e^{-\lambda p(0)-\sqrt{\lambda} f(0)}\lambda^{-m/2}\, \mathrm{P.V.} \int_{\Real{R}^{m+1}} e^{-\frac{1}{2}   t B t^{T}} e^{-  t \eta^{T}} \frac{1}{t_{m+1}} d^{m+1}t\nonumber\\
              &=  q(0)   e^{-\lambda p(0)-\sqrt{\lambda} f(0)} e^{\frac{1}{2} \widetilde{\eta} A^{-1} \widetilde{\eta}^{T}}
      \Big(\frac{2\pi}{\lambda}\Big)^{\frac{m}{2}}\frac{i\pi}{\sqrt{\det{A}}}\mathrm{erf}\big( \frac{
        i\eta_{m+1} - i\beta A^{-1} \widetilde{\eta}^{T}
        }{ \sqrt{2\alpha - 2\beta A^{-1} \beta^{T}}
        }\big). \label{laplaceMethod2Domain}
        \end{align}
        In the last equality Lemma \ref{integallemma} below has been used.

    Combination of   \eqref{laplaceMethod2Decay} and \eqref{laplaceMethod2Domain} completes the proof of Theorem \ref{steepestDecentonEdge}.
  \end{proof}

  \begin{lem}\label{integallemma}
   Let $A$ be an $m \times m$  complex symmetric matrix,  $\beta$ be an  $m$-dimensional complex vector and $\alpha \in \Real{C}$. Write $t=(t_1,\ldots,t_{m+1})$ and $\eta: = (\widetilde{\eta},\eta_{m+1})\in \Real{C}^{m+1}$ where $\widetilde{\eta}$ is an  $m$-dimensional vector. If the real part of the block matrix \begin{equation*}
          B = \begin{pmatrix}
            A & \beta^{T} \\
            \beta & \alpha
          \end{pmatrix}
        \end{equation*}
    is positive definite, then     \begin{multline}
        \mathrm{P.V.} \int_{\Real{R}^{m+1}} e^{-\frac{1}{2} t B t^{T}} e^{-t \eta^{T}} \frac{1}{t_{m+1}} d^{m+1}t \\
        =(2\pi)^{m/2}  e^{\frac{1}{2} \widetilde{\eta} A^{-1} \widetilde{\eta}^{T}}
       \,\frac{i\pi}{\sqrt{\det{A}}}\mathrm{erf}\big(\frac{
        i\eta_{m+1} - i\beta A^{-1} \widetilde{\eta}^{T}
        }{ \sqrt{2\alpha - 2\beta A^{-1} \beta^{T}}
        }\big).
    \end{multline}
  \end{lem}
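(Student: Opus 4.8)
The plan is to reduce the $(m+1)$-dimensional principal value integral to a one-dimensional principal value integral by first integrating out the $t_1,\ldots,t_m$ variables, which appear only through the Gaussian factor $e^{-\frac{1}{2}tBt^T}e^{-t\eta^T}$. Write $t=(\tilde t,t_{m+1})$ with $\tilde t=(t_1,\ldots,t_m)$, and expand the quadratic form using the block structure of $B$:
\[
\tfrac{1}{2}tBt^T = \tfrac{1}{2}\tilde t A\tilde t^T + t_{m+1}\beta\tilde t^T + \tfrac{1}{2}\alpha t_{m+1}^2 .
\]
First I would complete the square in $\tilde t$: setting $\tilde t = \tilde s - t_{m+1}\beta A^{-1} - \widetilde\eta A^{-1}$ (a real shift is legitimate since the $\tilde t$-integral is absolutely convergent, $\Re A>0$ being inherited from $\Re B>0$, and the integrand is entire in $\tilde t$, so contour shifts cost nothing), one collects the $\tilde t$-dependence into $e^{-\frac{1}{2}\tilde s A\tilde s^T}$, whose Gaussian integral over $\Real{R}^m$ is $(2\pi)^{m/2}/\sqrt{\det A}$ — here the branch of $\sqrt{\det A}$ and hence the implicit factor is the one determined by $\Re A>0$. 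The leftover exponent in $t_{m+1}$ is, after a short computation, $-\frac{1}{2}(\alpha-\beta A^{-1}\beta^T)t_{m+1}^2 - (\eta_{m+1}-\beta A^{-1}\widetilde\eta^T)t_{m+1} + \frac{1}{2}\widetilde\eta A^{-1}\widetilde\eta^T$, where $\alpha-\beta A^{-1}\beta^T$ is the Schur complement of $A$ in $B$ and hence has positive real part.

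This leaves me with the scalar identity
\[
\mathrm{P.V.}\int_{\Real{R}} e^{-\frac{1}{2}a s^2 - b s}\,\frac{ds}{s} = i\pi\,\mathrm{erf}\Big(\frac{-ib}{\sqrt{2a}}\Big)
\]
for $\Re a>0$, with $a=\alpha-\beta A^{-1}\beta^T$ and $b=\eta_{m+1}-\beta A^{-1}\widetilde\eta^T$; substituting this and the Gaussian constant gives exactly the claimed formula. To prove the scalar identity I would differentiate the left side with respect to $b$ (differentiation under the P.V. integral is justified because after differentiating the singular factor cancels and the resulting integral is an ordinary absolutely convergent Gaussian integral), obtaining $-\int_{\Real{R}}e^{-\frac{1}{2}as^2-bs}\,ds = -\sqrt{2\pi/a}\,e^{b^2/(2a)}$; integrating back in $b$ from $0$, using that the P.V. integral vanishes at $b=0$ by oddness of the integrand, and recognizing $\int_0^b e^{x^2/(2a)}dx$ as an error-function antiderivative recovers $i\pi\,\mathrm{erf}(-ib/\sqrt{2a})$ after fixing the constant via the known value $\mathrm{P.V.}\int e^{-s^2/2}s^{-1}ds=0$ and the normalization $\mathrm{erf}(0)=0$. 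Care is needed to pin down which square root branch and sign convention makes $\mathrm{erf}(-ib/\sqrt{2a})$ the correct analytic continuation, and to confirm the P.V. is an entire function of $b$ so the antidifferentiation argument is globally valid.

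The main obstacle is bookkeeping rather than conceptual: keeping the several $\sqrt{\cdot}$ branches mutually consistent (the $\sqrt{\det A}$ from the $m$-dimensional Gaussian, the $\sqrt{2\alpha-2\beta A^{-1}\beta^T}$ inside the error function, and the implicit $\sqrt{a}$ produced when differentiating the scalar integral) so that the final expression is unambiguous, and justifying the real contour shift in $\tilde t$ together with the differentiation under the integral sign in the singular variable. Both are handled by the $\Re B>0$ hypothesis, which guarantees $\Re A>0$ and $\Re(\alpha-\beta A^{-1}\beta^T)>0$ so that every Gaussian in sight is genuinely decaying, and by the elementary fact that the P.V. integral of an entire function against $1/s$ depends holomorphically on parameters.
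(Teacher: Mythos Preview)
Your proposal is correct and follows essentially the same route as the paper: both arguments complete the square in the first $m$ variables via the block/Schur decomposition of $B$, integrate out the resulting $m$-dimensional Gaussian to produce $(2\pi)^{m/2}/\sqrt{\det A}$, and then reduce to the one-dimensional principal value integral in $t_{m+1}$ with Schur complement $\alpha-\beta A^{-1}\beta^T$. The only difference is in the evaluation of that scalar P.V.\ integral: the paper symmetrizes it as $\int_0^\infty e^{-\frac12 a s^2}(e^{-bs}-e^{bs})s^{-1}\,ds$ and identifies the result with $\mathrm{erf}$ by term-by-term integration of the power series, whereas you differentiate in $b$ and integrate back from $b=0$; both are standard and equally rigorous once $\Re a>0$ is noted.
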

  \begin{proof}First, we get $\Re{A}>0$ from  $\Re{B}>0$. Since    $\Im\{A\}$ is symmetric by the assumption, we further have $\det A\neq 0$.
    Noticing the matrix decomposition
      \begin{equation*}
          B = \begin{pmatrix}
            I & 0\\
            \beta A^{-1} & 1
          \end{pmatrix} \begin{pmatrix}
            A & 0\\
            0 & \alpha - \beta A^{-1} \beta^{T}
          \end{pmatrix} \begin{pmatrix}
            I & A^{-1} \beta^{T}\\
            0 & 1
          \end{pmatrix},
      \end{equation*}
      we have
      \begin{equation*}
        \begin{split}
           t B t^{T} + 2t \eta^{T} &= (\widetilde{t} + t_{m+1} \beta A^{-1} + \widetilde{\eta} A^{-1}) A (\widetilde{t} + t_{m+1} \beta A^{-1} + \widetilde{\eta} A^{-1})^{T}+ \\
          & (\alpha - \beta A^{-1} \beta^{T}) t_{m+1}^{2} + 2t_{m+1} (\eta_{m+1} - \beta A^{-1} \widetilde{\eta}^{T}) - \widetilde{\eta} A^{-1} \widetilde{\eta}^{T}.
        \end{split}
      \end{equation*}
      Therefore, the Cauchy principal value integral
      \begin{align}
                 &\mathrm{P.V.} \int_{\Real{R}^{m+1}} e^{-\frac{1}{2} t B t^{T}} e^{-t \eta^{T}} \frac{1}{t_{m+1}} d^{m+1}t = (2\pi)^{m/2}  e^{\frac{1}{2} \widetilde{\eta} A^{-1} \widetilde{\eta}^{T}} \frac{1}{\sqrt{\Det{A}}}   \nonumber \\
          & \qquad\times \mathrm{P.V.} \int_{\Real{R}} e^{-\frac{1}{2} (\alpha - \beta A^{-1} \beta^{T}) t_{m+1}^{2}} e^{- (\eta_{m+1} - \beta A^{-1} \widetilde{\eta}^{T}) t_{m+1}} \frac{1}{t_{m+1}} dt_{m+1} \\
          &= (2\pi)^{m/2}  e^{\frac{1}{2} \widetilde{\eta} A^{-1} \widetilde{\eta}^{T}} \frac{1}{\sqrt{\Det{A}}}  \int_{0}^{\infty}dt_{m+1} e^{-\frac{1}{2} (\alpha - \beta A^{-1} \beta^{T}) t_{m+1}^{2}} \nonumber\\
          & \qquad \times \bigg(\frac{e^{- (\eta_{m+1} - \beta A^{-1} \widetilde{\eta}^{T}) t_{m+1}}}{t_{m+1}} - \frac{e^{(\eta_{m+1} - \beta A^{-1} \widetilde{\eta}^{T}) t_{m+1}}}{t_{m+1}}\bigg) \label{int1}\\
          &=(2\pi)^{m/2}  e^{\frac{1}{2} \widetilde{\eta} A^{-1} \widetilde{\eta}^{T}}
       \,\frac{i\pi}{\sqrt{\det{A}}}\mathrm{erf}\big( \frac{
        i\eta_{m+1} - i\beta A^{-1} \widetilde{\eta}^{T}
        }{ \sqrt{2\alpha - 2\beta A^{-1} \beta^{T}}
        }\big). \label{int2}
            \end{align}
      The last equality follows from  the series expression  of both \eqref{int1} and \eqref{int2} after integrating term-by-term; see also the equation 3.321.1 \cite[p.336]{jeffrey2007table}.
  \end{proof}

\begin{rem} \label{remark2.4} In order to use easily and widely, it is necessary to suppose that in Theorem \ref{laplaceMethod1} $f(\theta;t)=f_{\lambda}(\theta;t)$  does depend on the parameter $\lambda$.  Also if  $$\lim_{\lambda\rightarrow \infty}\sqrt{\lambda}\, \big(f_{\lambda}( \theta/\sqrt{\lambda};t/\lambda)-f_{\lambda}(0;0)\big)=\eta \theta^{T},$$ then the proof  of Theorem \ref{laplaceMethod1} is also applicable which  implies the same conclusion. Likewise,    if
$$\lim_{\lambda\rightarrow \infty}\sqrt{\lambda}\, \big(f( t/\sqrt{\lambda})-f(0)\big)=t  \eta^{T},$$
then  Theorem \ref{steepestDecentonEdge} still holds true.
  \end{rem}

\section{Products of Ginibre matrices with fixed parameters}\label{ginibrefixedsection}
\subsection{Integral representations of   finite sums}\label{repfinitesum} There are two main parts in the kernel function $K_{N}(z, z')$ given by  \eqref{kernelFunctionofProductInducedGaussianWeight}. One is the weight function $w_{m}(z)$ and its integral representation  in   \eqref{productWeightFunctionSpecifiedbyGinibre}  will be used to do asymptotic analysis, see Lemma \ref{laplaceMathedinbulk1} below. The other is the truncated finite sum of hypergeometric series
  \begin{align} T_{N}(z, z')
       = \sum_{l=0}^{N-1} \prod_{k=1}^{m} \frac{\Gamma(a_{k} + 1)}{\Gamma(a_{k} + l + 1)} (z\overline{z'})^{l} \label{finitesum}. \end{align}
  In order to get its asymptotics we need the following integral representation.
       \begin{prop} \label{repsumprop} For   a fixed   nonzero complex number $u$, let   $\rho=\rho(u)\neq 0$. Introduce rescaling variables
  \begin{equation} \label{rescalingz}
    z = u + \frac{v}{\rho \sqrt{N}}, \qquad z'= u + \frac{v'}{\rho \sqrt{N}}
  \end{equation} where  $v$ and $v'$ lie in a compact set of $\Real{C}$.  Then for sufficiently large $N$ we have
  \begin{align}
      T_{N}(N^{m/2}z, N^{m/2}z')  &=   (2\pi i)^{-m}N^{-\sum_{k=1}^{m}a_{k}}(z\overline{z'})^{-a_m}\prod_{j=1}^{m}  \Gamma(a_{j} + 1)\nonumber \\ &\times \int_{\mathcal{C}_{1} \times \cdots \times \mathcal{C}_{m}}Q(t) e^{-\sqrt{N} {F}(t)} \frac{e^{-N{P}_{1}(t)} - e^{-N{P}_{2}(t)}}{t_1 \cdots t_{m}-1}\, d^mt \label{saddleform}
    \end{align}
    where
  \begin{equation}\label{primativeP1}
    {P}_{1}(t) = -t_{1} - \cdots - t_{m-1} -  \abs{u}^{2}t_{m},
  \end{equation}
  \begin{equation}\label{primativeP2}
    {P}_{2}(t) = -t_{1} - \cdots - t_{m-1} -  \abs{u}^{2}t_{m} + \ln(t_1 \cdots t_{m}),
  \end{equation}
  \begin{equation}\label{primativeF}
    {F}(t) = - (u\overline{v'} + v\overline{u})t_{m}/\rho,
  \end{equation}
  and
  \begin{equation}\label{primativeQ}
    Q(t) =  e^{\frac{v\overline{v'}}{\rho^{2}}  t_{m}} \prod_{k=1}^{m} t_{k}^{-a_{k}}.
  \end{equation}
  Here    $\mathcal{C}_{k}$    is a path first going from  $-\infty$ to $r_{k}e^{i(-\pi+\theta_0)}$  ($0<\theta_0<\pi/2$) along the line parallel to the x-axis,  then going anticlockwise   along  the circle with radius of $r_k$  to $r_k e^{i(\pi-\theta_0)}$ and returning to $-\infty$ along the line parallel to the x-axis. Choose $r_1=\cdots=r_{m-1}=\abs{u}^{2/m}$ and $r_m=\abs{u}^{-2(m-1)/m}$.\end{prop}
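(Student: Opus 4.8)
The plan is to turn the finite hypergeometric sum \eqref{finitesum} into an $m$-fold contour integral by means of Hankel's integral representation of the reciprocal Gamma function, to collapse the resulting finite geometric series, and to rescale the integration variables so that the integrand becomes exactly the one in \eqref{saddleform}. The resulting identity will be \emph{exact} rather than asymptotic, the hypothesis ``for sufficiently large $N$'' only being needed to guarantee that $z\overline{z'}\neq0$. Concretely, I would use $\frac{1}{\Gamma(s)}=\frac{1}{2\pi i}\int_{\mathcal H}e^{\tau}\tau^{-s}\,d\tau$, valid for every $s\in\mathbb C$ and any loop $\mathcal H$ that comes in from $-\infty$ below the negative real axis, encircles $0$ once counterclockwise, and returns to $-\infty$ above it ($\tau^{-s}$ principal). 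Applying this to each factor $1/\Gamma(a_k+l+1)$, with $z,z'$ replaced by $N^{m/2}z,N^{m/2}z'$ (so that $z\overline{z'}\mapsto N^m z\overline{z'}$), and interchanging the finite sum with the integrals yields
\begin{multline*}
T_N(N^{m/2}z,N^{m/2}z')=\prod_{j=1}^m\Gamma(a_j+1)\,(2\pi i)^{-m}\\\times\int_{\mathcal H_1\times\cdots\times\mathcal H_m}\prod_{k=1}^m e^{\tau_k}\tau_k^{-(a_k+1)}\sum_{l=0}^{N-1}\Big(\frac{N^m z\overline{z'}}{\tau_1\cdots\tau_m}\Big)^{l}d^m\tau.
\end{multline*}

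Next I would substitute $\tau_k=Nt_k$ for $1\le k\le m-1$ and $\tau_m=Nz\overline{z'}\,t_m$. This is the crucial choice: it collapses the ratio inside the sum to $1/(t_1\cdots t_m)$, and the powers $\tau_k^{-(a_k+1)}$ together with the Jacobians produce precisely the prefactor $N^{-\sum_k a_k}(z\overline{z'})^{-a_m}$ and the factor $\prod_k t_k^{-(a_k+1)}$. Moreover, since $z\overline{z'}=\abs{u}^2+(u\overline{v'}+v\overline{u})/(\rho\sqrt N)+v\overline{v'}/(\rho^2N)$ is an exact identity under the rescaling \eqref{rescalingz}, the exponential splits with no error term,
\[
e^{\tau_m}=e^{Nz\overline{z'}t_m}=e^{N\abs{u}^2t_m}\cdot e^{\sqrt N\,(u\overline{v'}+v\overline{u})t_m/\rho}\cdot e^{(v\overline{v'}/\rho^2)t_m}.
\]

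I would then collapse the geometric series, $\sum_{l=0}^{N-1}(t_1\cdots t_m)^{-l}=\frac{t_1\cdots t_m}{t_1\cdots t_m-1}\bigl(1-(t_1\cdots t_m)^{-N}\bigr)$, and absorb the surplus $t_1\cdots t_m$ into $\prod_k t_k^{-(a_k+1)}$ to obtain $\prod_k t_k^{-a_k}$. Collecting the pieces, the integrand becomes
\begin{multline*}
\prod_{k=1}^m t_k^{-a_k}\;e^{(v\overline{v'}/\rho^2)t_m}\;e^{\sqrt N(u\overline{v'}+v\overline{u})t_m/\rho}\,e^{N(t_1+\cdots+t_{m-1}+\abs{u}^2t_m)}\\\times\frac{1-(t_1\cdots t_m)^{-N}}{t_1\cdots t_m-1},
\end{multline*}
which, using $e^{-NP_1(t)}=e^{N(t_1+\cdots+t_{m-1}+\abs{u}^2t_m)}$ and $e^{-NP_2(t)}=e^{-NP_1(t)}(t_1\cdots t_m)^{-N}$, is exactly $Q(t)\,e^{-\sqrt N F(t)}\,\frac{e^{-NP_1(t)}-e^{-NP_2(t)}}{t_1\cdots t_m-1}$ with $P_1,P_2,F,Q$ as in \eqref{primativeP1}--\eqref{primativeQ}. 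It remains to move the images of the Hankel loops to the prescribed contours $\mathcal C_1,\dots,\mathcal C_m$. Writing $w=t_1\cdots t_m$, one has $\frac{1-w^{-N}}{w-1}=\sum_{j=1}^N w^{-j}$, so the integrand is $\prod_k t_k^{-a_k}$ times a function holomorphic off $\{t_1\cdots t_m=0\}$; in particular the apparent singularity along $t_1\cdots t_m=1$ (which the contours $\mathcal C_k$ do meet, at $(r_1,\dots,r_m)$) is removable, so no principal value is needed, and each loop may be deformed — keeping $0$ enclosed and staying off the cuts $(-\infty,0]$ — to $\mathcal C_k$ of radius $r_k$; these radii are admissible since $r_1\cdots r_m=\abs{u}^{2(m-1)/m}\cdot\abs{u}^{-2(m-1)/m}=1$. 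The arcs used to join the two families of contours near infinity contribute nothing because $\mathrm{Re}(z\overline{z'})>0$ for large $N$ forces $e^{\tau_m}$, and likewise each $e^{\tau_k}$, to decay along the tails parallel to the real axis.

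The genuinely delicate point is the bookkeeping in the rescaling: one must scale $\tau_m$ by $Nz\overline{z'}$ and \emph{not} by the more obvious $N\abs{u}^2$, for otherwise the denominator comes out as $t_1\cdots t_m-z\overline{z'}/\abs{u}^2$ and the series $\sum_l\big(z\overline{z'}/(\abs{u}^2t_1\cdots t_m)\big)^l$ would spill $\sqrt N$- and $O(1)$-order terms into the wrong factors; with the chosen scaling these contributions land exactly in $F$ and $Q$. Keeping track of the branches of $\tau_k^{-(a_k+1)}$ and of $(z\overline{z'})^{-a_m}$ throughout is harmless because $\arg(z\overline{z'})\to0$, so apart from this one observation the argument is a routine contour manipulation.
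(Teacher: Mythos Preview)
Your proposal is correct and follows essentially the same route as the paper: Hankel's integral for each $1/\Gamma(a_k+l+1)$, summation of the geometric series, and the change of variables $\tau_k\mapsto Nt_k$ ($k<m$), $\tau_m\mapsto Nz\overline{z'}\,t_m$, which the paper also singles out as the key step. Your treatment is in fact more careful than the paper's on two points the paper glosses over with ``the paths can be chosen as needed'': the removability of the singularity at $t_1\cdots t_m=1$ and the justification that the contours can be deformed to the prescribed $\mathcal C_k$.
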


  \begin{proof} With the help of the integral of the  reciprocal   Gamma function \cite[p.245]{whittaker1927course}
  \begin{equation}\label{intrepgamma1}
    \frac{1}{\Gamma(a)} = \frac{1}{2\pi i} \int_{\mathcal{C}} t^{-a}e^{t} dt,
  \end{equation}
  where $\mathcal{C}$ is a path starting at $-\infty$, encircling the origin once in the positive direction and returning to $-\infty$, with \eqref{finitesum} in mind we have   \begin{align}\label{partofKernelFunctionofProductInducedGaussianWeight}
       T_{N}&(z, z')
            = \prod_{j=1}^{m} \Gamma(a_{j} + 1) \sum_{l=0}^{N-1} (z\overline{z'})^{l} \prod_{k=1}^{m} \left(\frac{1}{2\pi i} \int_{\mathcal{C}_{k}} t_{k}^{-(a_{k} + l + 1)}e^{t_{k}} dt_{k}\right)\nonumber \\
      &=  (2\pi i)^{-m}\prod_{j=1}^{m}  \Gamma(a_{j} + 1) \int_{\mathcal{C}_{1} \times \cdots \times \mathcal{C}_{m}} \frac{1- \paren{\frac{z\overline{z'}}{t_{1} \cdots t_{m}}}^{N}}{1-\frac{z\overline{z'}}{t_{1} \cdots t_{m}}} \prod_{k=1}^{m} t_{k}^{-(a_{k} + 1)} e^{t_{k}}\, d^m t.
      \end{align}

      Since
  \begin{equation}
    z\overline{z'} = \abs{u}^{2} + \frac{u\overline{v'} + v\overline{u}}{\rho \sqrt{N}} + \frac{v\overline{v'}}{\rho^{2} N}
  \end{equation}
  and further $\Re{z\overline{z'}}>0$ for sufficiently large $N$,  simple calculation shows
  \begin{align}
      T_{N}(N^{m/2}z, N^{m/2}z')  &=   (2\pi i)^{-m}N^{-\sum_{k=1}^{m}a_{k}}(z\overline{z'})^{-a_m}\prod_{j=1}^{m}  \Gamma(a_{j} + 1)\nonumber \\ &\times \int_{\mathcal{C}_{1} \times \cdots \times \mathcal{C}_{m}}Q(t) e^{-\sqrt{N} {F}(t)} \frac{e^{-N{P}_{1}(t)} - e^{-N{P}_{2}(t)}}{t_1 \cdots t_{m}-1}\, d^mt \label{saddleform}
    \end{align}
    where we have used  the change of variables $t_{1} \to N t_{1}, \ldots, t_{m-1} \to N t_{m-1}$ and $ t_{m} \to N z\overline{z'}t_{m}$. The paths   can be chosen as needed.\end{proof}

\subsection{Several lemmas}
    First, we compute asymptotic behaviour of the weight function.
    \begin{lem}\label{laplaceMathedinbulk1}
      For $u\neq 0$, let
      \begin{equation}
        z = u + \frac{v}{\rho \sqrt{N}},
      \end{equation}
      then
      \begin{align}
          w_{m}(N^{m/2}z)&\sim\frac{1}{\pi\sqrt{m}} N^{\sum_{k=1}^{m} a_{k}} \paren{\frac{2\pi}{N}}^{(m-1)/2}\prod_{k=1}^{m} \frac{1}{\Gamma(a_{k}+1)} \nonumber \\
          &\quad \times e^{-Nm\abs{u}^{2/m}-\sqrt{N} \frac{u\overline{v} + v\overline{u}}{\rho} \abs{u}^{-2(m-1)/m}} e^{-\frac{\abs{v}^{2}}{\rho^{2}}\abs{u}^{-2(m-1)/m}} \nonumber\\
          &\quad \times e^{\frac{m-1}{2m} \paren{\frac{u\overline{v} + v\overline{u}}{\rho}}^{2} \abs{u}^{-4(m-1)/m} \abs{u}^{-2/m}} \abs{u}^{-\frac{m-1}{m}+\frac{2}{m} \sum_{k=1}^{m} a_{k}}.
            \end{align}
            Moreover, it holds true uniformly for $v$ in any compact subset of $\mathbb{C}$.
    \end{lem}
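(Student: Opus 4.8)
The plan is to apply the classical real-variable multivariate Laplace method to the integral representation \eqref{productWeightFunctionSpecifiedbyGinibre} of $w_{m}$, the only feature beyond a textbook computation being the extra factor generated by the $\sqrt{N}$-size term in the exponent. Since the argument of $w_{m}$ enters \eqref{productWeightFunctionSpecifiedbyGinibre} only through $\abs{N^{m/2}z}^{2}$, which for $z = u + v/(\rho\sqrt N)$ is the positive real number
\begin{equation*}
\abs{N^{m/2}z}^{2} = N^{m}\paren{\abs{u}^{2} + \frac{u\overline{v} + v\overline{u}}{\rho\sqrt N} + \frac{\abs{v}^{2}}{\rho^{2}N}},
\end{equation*}
the exponent $p(r) = \sum_{k=1}^{m-1}r_{k}^{2} + \abs{N^{m/2}z}^{2}/(r_{1}^{2}\cdots r_{m-1}^{2})$ stays real on $\Real{R}_{+}^{m-1}$ and no contour deformation is needed here.

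First I would substitute $r_{k} = \sqrt N\,\sigma_{k}$, so that the exponent becomes $N\widetilde p_{0}(\sigma) + \sqrt N\,\widetilde f(\sigma) + \widetilde g(\sigma)$ with
\begin{equation*}
\widetilde p_{0}(\sigma) = \sum_{k=1}^{m-1}\sigma_{k}^{2} + \frac{\abs{u}^{2}}{\sigma_{1}^{2}\cdots\sigma_{m-1}^{2}},\qquad \widetilde f(\sigma) = \frac{u\overline{v}+v\overline{u}}{\rho}\,\frac{1}{\sigma_{1}^{2}\cdots\sigma_{m-1}^{2}},\qquad \widetilde g(\sigma) = \frac{\abs{v}^{2}}{\rho^{2}}\,\frac{1}{\sigma_{1}^{2}\cdots\sigma_{m-1}^{2}},
\end{equation*}
while $q(r)\,d^{m-1}r = N^{\sum_{k=1}^{m-1}(a_{k}-a_{m})}\prod_{k=1}^{m-1}\sigma_{k}^{2(a_{k}-a_{m})-1}\,d^{m-1}\sigma$, the two half-powers of $N$ cancelling. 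Since $\widetilde p_{0}\to+\infty$ on the boundary of $\Real{R}_{+}^{m-1}$ and at infinity, a standard tail estimate localises the integral near the unique critical point, which by symmetry is $\sigma^{*} = (\abs{u}^{1/m},\dots,\abs{u}^{1/m})$; one checks $\widetilde p_{0}(\sigma^{*}) = m\abs{u}^{2/m}$, and the Hessian $A$ of $\widetilde p_{0}$ at $\sigma^{*}$ has all diagonal entries $8$ and all off-diagonal entries $4$, i.e.\ $A = 4(I_{m-1}+J_{m-1})$ with $J_{m-1}$ the all-ones matrix, whence by Appendix \ref{appendixa} $\Det{A} = 4^{m-1}m$ and $\mathbf{1}\,A^{-1}\mathbf{1}^{T} = (m-1)/(4m)$.

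The only non-mechanical point is that a naive Laplace estimate discards the $\sqrt N\,\widetilde f$ term; instead I would set $\sigma = \sigma^{*}+\xi/\sqrt N$, so that $N\widetilde p_{0}(\sigma) = N\widetilde p_{0}(\sigma^{*}) + \tfrac12\xi A\xi^{T} + o(1)$ and $\sqrt N\,\widetilde f(\sigma) = \sqrt N\,\widetilde f(\sigma^{*}) + \eta\xi^{T} + o(1)$ with $\eta := \nabla\widetilde f(\sigma^{*})$, and the Gaussian integration in $\xi$ then produces the extra constant $e^{\frac12\eta A^{-1}\eta^{T}}$ — the same completing-the-square mechanism as in the proof of Theorem \ref{laplaceMethod1}, only here without the singular factor $1/t$ — while $\widetilde g(\sigma^{*})$ merely adds the $O(1)$ term $-\abs{v}^{2}\abs{u}^{-2(m-1)/m}/\rho^{2}$ to the exponent. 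Evaluating at $\sigma^{*}$ gives $\widetilde f(\sigma^{*}) = \frac{u\overline v+v\overline u}{\rho}\abs{u}^{-2(m-1)/m}$ and $\eta = -2\frac{u\overline v+v\overline u}{\rho}\abs{u}^{-(2m-1)/m}\mathbf{1}$, so that $\tfrac12\eta A^{-1}\eta^{T} = \frac{m-1}{2m}\paren{\frac{u\overline v+v\overline u}{\rho}}^{2}\abs{u}^{-4(m-1)/m}\abs{u}^{-2/m}$; it remains to multiply in the explicit prefactors $\abs{N^{m/2}z}^{2a_{m}}\sim N^{ma_{m}}\abs{u}^{2a_{m}}$, $\frac{2^{m-1}}{\pi}\prod_{k}\Gamma(a_{k}+1)^{-1}$, the Gaussian normalisation $(2\pi/N)^{(m-1)/2}/\sqrt{\Det{A}}$, and $\prod_{k=1}^{m-1}(\sigma_{k}^{*})^{2(a_{k}-a_{m})-1}$, after which the powers of $N$ consolidate to $N^{\sum_{k=1}^{m}a_{k}}$, the numerical constant to $\frac{2^{m-1}}{\pi}\cdot\frac{1}{2^{m-1}\sqrt m} = \frac{1}{\pi\sqrt m}$, and the powers of $\abs{u}$ to $\abs{u}^{-(m-1)/m + \frac2m\sum_{k=1}^{m}a_{k}}$, reproducing the stated formula. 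Uniformity in $v$ over compact subsets of $\Real{C}$ is automatic, since $\widetilde f$, $\widetilde g$ and $\eta$ depend continuously on $v$ and the Laplace estimate is uniform for parameters in a compact set. The step I expect to demand the most care is making the completing-the-square argument rigorous — controlling the $N$-dependent amplitude uniformly inside the Laplace approximation — rather than the lengthy but routine tracking of the powers of $N$ and $\abs{u}$.
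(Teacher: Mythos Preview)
Your proposal is correct and follows essentially the same route as the paper's proof: after the rescaling $r_{k}\to\sqrt{N}\,\sigma_{k}$ you identify the same saddle point $\sigma^{*}=(\abs{u}^{1/m},\dots,\abs{u}^{1/m})$, the same Hessian $A=4(I+J)$, and apply Laplace's method with the completing-the-square correction $e^{\frac12\eta A^{-1}\eta^{T}}$ coming from the $\sqrt{N}$-term. The only difference is presentational---you spell out the substitution $\sigma=\sigma^{*}+\xi/\sqrt{N}$ explicitly, whereas the paper simply cites the Laplace formula from Wong and records the resulting factor.
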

    \begin{proof} Note  that by rescaling   variables we have
      \begin{equation}
        \begin{split}
          w_{m}(N^{m/2}z) &= \frac{1}{\pi}\abs{z}^{2a_{m}}  2^{m-1}N^{\sum_{k=1}^{m} a_{k}} \prod_{k=1}^{m} \frac{1}{\Gamma(a_{k}+1)}\\
          &\quad \times \int_{\Real{R}_{+}^{m-1}} e^{-\sqrt{N}f(r)}e^{-Np(r)} q(r) d^{m-1}r
        \end{split}
      \end{equation}
      where
      \begin{equation}\label{pforWeightFunction2}
        p(r) = p(r_{1}, \cdots, r_{m-1}) = \frac{\abs{u}^{2}}{r_{1}^{2} \cdots r_{m-1}^{2}} + \sum_{k=1}^{m-1} r_{k}^{2},
      \end{equation}
      \begin{equation}\label{qforWeightFunction2}
        q(r) = q(r_{1}, \cdots, r_{m-1}) = \prod_{k=1}^{m-1} r_{k}^{2(a_{k} - a_{m}) - 1} e^{-\frac{v\overline{v}}{\rho^{2}} \frac{1}{r_{1}^{2} \cdots r_{m-1}^{2}}},
      \end{equation}
      and
      \begin{equation}\label{fforWeightFunction2}
        f(r) = f(r_{1}, \cdots, r_{m-1}) = \frac{u\overline{v} + v\overline{u}}{\rho} \frac{1}{r_{1}^{2} \cdots r_{m-1}^{2}}.
      \end{equation}
      One easily verifies   that the point $s = (\abs{u}^{1/m}, \cdots, \abs{u}^{1/m})$ is the unique saddle point of $p(r)$. 
      Let
      \begin{equation*}
        A = \paren{\frac{\partial^{2} p(s)}{\partial r_{k} \partial r_{j}}}_{1 \leq k,j \leq m-1},
      \end{equation*}
      where
      \begin{equation*}
        \frac{\partial^{2} p(s)}{\partial r_{k} \partial r_{j}} = \begin{cases}
          4, & \text{if~} k \neq j,\\
          8, & \text{if~} k = j,
        \end{cases}
      \end{equation*}
      and
      \begin{equation*}
        \eta := \nabla f(s) = -2 \frac{u\overline{v} + v\overline{u}}{\rho} \abs{u}^{-2(m-1)/m} \abs{u}^{-1/m}\paren{1, \cdots, 1}.
      \end{equation*}

     We see that $p(r) + \frac{1}{\sqrt{N}}f(r)$ has a positive lower bound  on the  region of integration except for a neighbour of $s$ for sufficiently  large $N$. As in the proof of Theorem \eqref{laplaceMethod1} (see Condition \ref{condition3}  therein), the integral except for the part in that neighbour of $s$ is exponentially decaying. Furthermore,
 Laplace's approximation \cite[p.495]{wong2001asymptotic} can be used to give
      \begin{align}
               w_{m}(N^{m/2}z)&\sim \frac{\abs{u}^{2a_{m}}}{\pi\sqrt{m}} N^{\sum_{k=1}^{m} a_{k}} \prod_{k=1}^{m} \frac{1}{\Gamma(a_{k}+1)} \paren{\frac{2\pi}{N}}^{(m-1)/2} \nonumber\\
          &\quad \times e^{-Np(s)} e^{-\sqrt{N}f(s)} e^{\frac{1}{2} \eta A^{-1} \eta^{T}} q(s)
              \end{align}
              from which the desired result follows. Here please refer to Appendix  \ref{appendixa} for the inverse $A^{-1}$.

                Obviously,   the asymptotics holds uniformly for $v$ in any compact subset of $\mathbb{C}$.
    \end{proof}

  We remark that the asymptotics of the weight function has been computed in \cite[Appendix B]{akemann2012universal} when $v=0$. Our computation is very similar to that. Actually, this special case has been  studied by Barnes \cite{barnes1907asymptotic},  and the readers can also find it in the Fields' work \cite{fields1972asymptotic}.

   \begin{lem} \label{twopartslemma} With the same notation as in Proposition \ref{repsumprop},   for sufficiently large $N$ there exists $c_0>0$ such that
  \begin{align}
      &T_{N}(N^{m/2}z, N^{m/2}z')  =   (2\pi i)^{-m}N^{-\sum_{k=1}^{m}a_{k}}(z\overline{z'})^{-a_m}\prod_{j=1}^{m}  \Gamma(a_{j} + 1)\nonumber \\ &  \times \Big(e^{Nm\abs{u}^{2/m}-c_0 N}  e^{\sqrt{N}\frac{u\overline{v'} + v\overline{u}}{\rho} \abs{u}^{-2(m-1)/m}}O(1) \nonumber\\
      &\hspace{3cm}+\int_{\gamma_{1} \times \cdots \times \gamma_{m}}q(t) e^{-\sqrt{N} f(t)} \frac{e^{-Np_{1}(t)} - e^{-Np_{2}(t)}}{t_{m}-1}\, d^mt\Big) \label{decayandleading}
    \end{align}
    where
  \begin{equation}\label{primativep1}
    p_{1}(t) = -t_{1} - \cdots - t_{m-1} -  \frac{\abs{u}^{2} t_{m}}{t_1 \cdots t_{m-1}},
  \end{equation}
  \begin{equation}\label{primativep2}
    p_{2}(t) = -t_{1} - \cdots - t_{m-1} -  \frac{\abs{u}^{2} t_{m}}{t_1 \cdots t_{m-1}} + \ln t_{m},
  \end{equation}
  \begin{equation}\label{primativef}
    f(t) = -  \frac{u\overline{v'} + v\overline{u}}{\rho} \frac{t_{m}}{t_1 \cdots t_{m-1}},
  \end{equation}
  and
  \begin{equation}\label{primativeq}
    q(t) =  e^{\frac{v\overline{v'}}{\rho^{2}} \, \frac{t_{m}}{t_1 \cdots t_{m-1}}} t_{m}^{-a_m}\prod_{k=1}^{m-1} t_{k}^{a_m-a_{k}-1}.
  \end{equation}
 Here    $\gamma_{k}=\{t_k:\abs{t_k}=\abs{u}^{2/m}, \abs{t_k-\abs{u}^{2/m}}<\varepsilon_{0}\}$ ($k=1,\ldots,m-1$) and
 $\gamma_{m}=\{t_m:\abs{t_m}=1, \abs{t_m-1}<\varepsilon_{0}\}$ for some $\varepsilon_{0}>0$. Moreover, both $\Re{p_{1}(t)}$ and $\Re{p_{2}(t)}$
 attain their unique minimum   over $\gamma_{1} \times \cdots \times \gamma_{m}$ at $\tilde{s} = (\abs{u}^{2/m}, \cdots, \abs{u}^{2/m}, 1)$.
 \end{lem}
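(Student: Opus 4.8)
The plan is to deduce the lemma from Proposition~\ref{repsumprop} by a single change of variables followed by a saddle-point localization of the Hankel-type contours. First I would substitute, in the integral of Proposition~\ref{repsumprop} over $\mathcal C_1\times\cdots\times\mathcal C_m$ with integration variables $s=(s_1,\dots,s_m)$,
\[
 s_k=t_k\quad(1\le k\le m-1),\qquad s_m=\frac{t_m}{t_1\cdots t_{m-1}} ,
\]
equivalently $t_m=s_1\cdots s_m$; since $s_k\neq0$ on $\mathcal C_k$ this is a holomorphic substitution with Jacobian $(t_1\cdots t_{m-1})^{-1}$. Plugging it into \eqref{primativeP1}--\eqref{primativeQ} one checks at once that $P_1(s)=p_1(t)$, $P_2(s)=P_1(s)+\ln(s_1\cdots s_m)=p_1(t)+\ln t_m=p_2(t)$, $F(s)=f(t)$, $Q(s)=q(t)\,t_1\cdots t_{m-1}$, and $s_1\cdots s_m-1=t_m-1$, so that $Q(s)\,d^ms=q(t)\,d^mt$ and the integrand passes exactly to $q(t)e^{-\sqrt N f(t)}\bigl(e^{-Np_1(t)}-e^{-Np_2(t)}\bigr)/(t_m-1)$. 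Since $e^{-Np_1(t)}-e^{-Np_2(t)}=e^{-Np_1(t)}(1-t_m^{-N})$ vanishes at $t_m=1$, the factor $(t_m-1)^{-1}$ is harmless, and the transformed integrand is holomorphic in each $t_k$ $(k<m)$ off $t_k=0$ and in $t_m$ off $(-\infty,0]$.

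Next I would split the contour. Because $P_1$ is linear, on $\mathcal C_1\times\cdots\times\mathcal C_m$ the integrand of Proposition~\ref{repsumprop} is controlled by $e^{-N\Re{P_1(s)}}$ up to factors that are harmless on the circular arcs and are swamped by the superexponential decay of $e^{-N\Re{P_1(s)}}$ far out on the rays. On the arcs, writing $s_k=\abs{u}^{2/m}e^{i\psi_k}$ for $k<m$ and $s_m=\abs{u}^{-2(m-1)/m}e^{i\psi_m}$, one has
\[
 \Re{P_1(s)}=-\abs{u}^{2/m}\biggl(\sum_{k=1}^{m-1}\cos\psi_k+\cos\psi_m\biggr)\ge-m\abs{u}^{2/m},
\]
with equality only at $s^{*}=(\abs{u}^{2/m},\dots,\abs{u}^{2/m},\abs{u}^{-2(m-1)/m})$; on the rays $\Re{s_k}\le-\abs{u}^{2/m}\cos\theta_0$ (resp.\ $\Re{s_m}\le-\abs{u}^{-2(m-1)/m}\cos\theta_0$), so there $\Re{P_1(s)}\ge-m\abs{u}^{2/m}+c_0$ for a fixed $c_0>0$, and $\Re{P_1(s)}\to+\infty$ at the far ends. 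The same bounds hold for $\Re{P_2}$, since $\ln\abs{s_1\cdots s_m}=0$ on the arcs (by the choice of radii) and $\ge0$ on the rays. Splitting $\mathcal C_1\times\cdots\times\mathcal C_m$ into a small neighbourhood $V$ of $s^{*}$ and its complement, the complement then contributes $O(1)\,e^{-NP_1(s^{*})-\sqrt NF(s^{*})-c_0N}$, and since $-P_1(s^{*})=m\abs{u}^{2/m}$ and $-F(s^{*})=\tfrac{u\overline{v'}+v\overline{u}}{\rho}\abs{u}^{-2(m-1)/m}$, this is exactly the explicit decaying term of \eqref{decayandleading}.

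On $V$ the substitution applies with $t_1\cdots t_{m-1}$ staying within $O(\varepsilon_0)$ of the positive number $\abs{u}^{2(m-1)/m}$, so the image of $V$ consists of short arcs on $\abs{t_k}=\abs{u}^{2/m}$ through $\abs{u}^{2/m}$ for $k<m$ --- the $\gamma_k$ of the statement, once the size of $V$ is fixed --- together with a short arc near $t_m=1$; I would deform the latter onto $\gamma_m=\{\,\abs{t_m}=1,\ \abs{t_m-1}<\varepsilon_0\,\}$ (legitimate since the integrand is holomorphic near $t_m=1$), the end corrections lying at $\abs{t_m-1}\approx\varepsilon_0$ and hence, by the estimate above, again contributing only a term $O(1)\,e^{Nm\abs{u}^{2/m}-c_0N}e^{\sqrt N(\cdots)}$. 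Collecting the two exponentially small pieces yields \eqref{decayandleading} with the main integral over $\gamma_1\times\cdots\times\gamma_m$. For the final assertion, writing $t_k=\abs{u}^{2/m}e^{i\phi_k}$ for $k<m$ and $t_m=e^{i\phi_m}$, \eqref{primativep1} gives $p_1(t)=-\abs{u}^{2/m}\bigl(\sum_{k=1}^{m-1}e^{i\phi_k}+e^{i(\phi_m-\phi_1-\cdots-\phi_{m-1})}\bigr)$, so $\Re{p_1(t)}=-\abs{u}^{2/m}\bigl(\sum_{k<m}\cos\phi_k+\cos(\phi_m-\phi_1-\cdots-\phi_{m-1})\bigr)\ge-m\abs{u}^{2/m}$, with equality only when all $\phi_k=0$, i.e.\ at $\tilde s=(\abs{u}^{2/m},\dots,\abs{u}^{2/m},1)$; and since $\abs{t_m}=1$ on $\gamma_m$, by \eqref{primativep2} $\Re{p_2(t)}=\Re{p_1(t)}+\ln\abs{t_m}=\Re{p_1(t)}$, so $p_1$ and $p_2$ have the same unique minimizer.

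The hard part will be the contour bookkeeping: carrying out the Hankel-type deformations uniformly in $v,v'$ and in $N$ --- checking that no singularity is crossed and that the integrand decays at $-\infty$, which is exactly why $V$, and with it the arc through $t_m=1$, must be small enough that $\arg(t_1\cdots t_{m-1})$ stays near $0$ --- and producing a uniform constant $c_0>0$ together with the $O(1)$ bounds on the ray pieces, where $f$ is unbounded.
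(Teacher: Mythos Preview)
Your approach is essentially the paper's: split the contour from Proposition~\ref{repsumprop} into a small neighbourhood of $s^{*}=t_{0}=(\abs{u}^{2/m},\dots,\abs{u}^{2/m},\abs{u}^{-2+2/m})$ and its complement, bound the complement by the strict positivity of $\Re{P_{1}(s)-P_{1}(s^{*})}$ there, and on the neighbourhood apply the substitution $t_{m}=s_{1}\cdots s_{m}$ to land on $\gamma_{1}\times\cdots\times\gamma_{m}$. The paper defines its neighbourhood by the condition $\abs{s_{1}\cdots s_{m}-1}<\varepsilon_{0}$ (together with $\abs{s_{k}-\abs{u}^{2/m}}<\varepsilon_{0}$ for $k<m$), so that the image of $s_{m}$ under the substitution is already an arc of the unit circle and no further deformation of the $t_{m}$-contour is needed; your choice of $V$ forces the extra deformation step but is otherwise equivalent.

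One point you should sharpen: on the complement of $V$ the locus $s_{1}\cdots s_{m}=1$ is still present (on the arcs, $\abs{s_{1}\cdots s_{m}}\equiv1$), so ``harmless factors'' is not enough. The paper does not split into separate $P_{1}$- and $P_{2}$-pieces there; instead it keeps the combined quotient and uses the elementary bound
\[
\left|\frac{1-(s_{1}\cdots s_{m})^{-N}}{s_{1}\cdots s_{m}-1}\right|\le N,
\]
valid because $\abs{s_{1}\cdots s_{m}}\ge 1$ everywhere on $\mathcal C_{1}\times\cdots\times\mathcal C_{m}$ (radii multiply to $1$, and on the rays the modulus only increases). That polynomial factor is then absorbed into $e^{-c_{0}N}$. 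With this bound made explicit your argument goes through exactly as the paper's.
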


   \begin{proof}  Let $D=\mathcal{C}_{1} \times \cdots \times \mathcal{C}_{m}$ and  $t_{0} = (\abs{u}^{2/m}, \cdots, \abs{u}^{2/m}, \abs{u}^{-2+2/m})$. Note that  for all $t\in D$ and $t \neq t_{0}$ the  following relations hold
      \begin{equation}
        \Re{P_{1}(t)} > \Re{P_{1}(t_{0})}\  \mathrm{and}\   \Re{P_{2}(t)} > \Re{P_{2}(t_{0})} \ \mathrm{for}\ t \neq t_{0}.
      \end{equation}
     Moreover, by choosing a small $\varepsilon_{0}>0$, setting $$D_{\varepsilon_{0}}=\Big\{t\in D :\abs{t_1\cdots t_m-1}<\varepsilon_0; \abs{t_k-\abs{u}^{2/m}}<\varepsilon_{0}, 1\leq k< m-1\Big\},$$ one can verify  that  there exists a positive number $c_0$ such that for sufficiently large $N$ and for every $t\in D \backslash D_{\varepsilon_0}$
     $$ \Re{P_1(t)-P_1(t_0)+\frac{F(t)-F(t_0)}{\sqrt{N}}} \geq 2c_0$$
     and   $$ \Re{P_2(t)-P_2(t_0)+\frac{F(t)-F(t_0)}{\sqrt{N}}} \geq 2c_0.$$
     We also know from the fact $\abs{ t_1 \cdots t_{m}}\geq 1$ for all $t\in D$  that \begin{align}
      \abs{\frac{1- e^{-N (P_{2}(t)-{P}_{1}(t) )}}{t_1 \cdots t_{m}-1}} &= \abs{\frac{1- (t_1 \cdots t_{m})^{-N}}{t_1 \cdots t_{m}-1}}  \leq N,
    \end{align}
    which can be controlled by $e^{-c_0 N}$.
     These  show that the integral over $D \backslash D_{\varepsilon_0}$ is exponentially decaying  as indicated in \eqref{decayandleading}.

    We next turn to the remaining integral.   Noticing the choice of the contours $\mathcal{C}_{k}$ and the definition of $D_{\varepsilon_{0}}$, we  can always choose a small $\varepsilon_0$ such that the range of $t_1\cdots t_m$ is an arc of the unit circle. By  change of variables $$(t_{1}, \cdots, t_{m-1}, t_{m}) \mapsto (t_{1}, \cdots, t_{m-1}, \frac{t_{m}}{t_{1} \cdots t_{m-1}}),$$ the domain $D_{\varepsilon_{0}}$ is changed into $\gamma_{1} \times \cdots \times \gamma_{m}$, and the extremum property of $\Re{p_{1}(t)}$ and $\Re{p_{2}(t)}$ follows from that of $\Re{P_{1}(t)}$ and $\Re{P_{2}(t)}$ over $D_{\varepsilon_0}$.
    Then the lemma immediately follows.
      \end{proof}

    \begin{lem}\label{laplaceMathedinbulk2}
     For $u\neq 0$, with $z, z'$ in \eqref{rescalingz} we have
      \begin{align}
               &(z\overline{z'})^{-a_m}\,\mathrm{P.V.} \int_{\gamma_{1} \times \cdots \times \gamma_{m}}q(t) e^{-\sqrt{N} {f}(t)} \frac{e^{-N{p}_{1}(t)}}{t_{m}-1}\, d^mt\nonumber\\
          &\sim i^{m} \frac{\pi}{\sqrt{m}}     (2\pi/N)^{(m-1)/2} \abs{u}^{-\frac{m-1}{m}-\frac{2}{m}\sum_{k=1}^{m} a_{k}} e^{Nm\abs{u}^{2/m}+\sqrt{N}\frac{u\overline{v'} + v\overline{u}}{\rho} \abs{u}^{-2(m-1)/m}}\nonumber\\
           &  \quad \times e^{\frac{v\overline{v'}}{\rho^{2}} \abs{u}^{-2(m-1)/m}} e^{-\frac{m-1}{2m} \paren{\frac{u\overline{v'} + v\overline{u}}{\rho}}^{2} \abs{u}^{-4(m-1)/m} \abs{u}^{-2/m}} .
            \end{align}
      where $p_{1}(t)$, $f(t)$ and $q(t)$ are given by (\ref{primativep1}), (\ref{primativef}) and (\ref{primativeq}) respectively.
       Moreover, it holds uniformly for $v, v'$ in any compact subset of $\mathbb{C}$.
    \end{lem}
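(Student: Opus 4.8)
The plan is to apply Theorem \ref{laplaceMethod1} to the principal-value integral over $\gamma_1 \times \cdots \times \gamma_m$, with $\lambda = N$ and with the singular variable $t_m$ (shifted by $1$) playing the role of the variable $t$ in that theorem. First I would recenter: set $\tilde s = (\abs{u}^{2/m}, \cdots, \abs{u}^{2/m}, 1)$, which by Lemma \ref{twopartslemma} is the unique minimizer of both $\Re{p_1}$ and $\Re{p_2}$ over the product contour, and substitute $t_k = \abs{u}^{2/m} + \theta_k$ for $k < m$ and $t_m = 1 + s$, so that the singularity $1/(t_m - 1)$ becomes $1/s$ and the relevant critical point sits at the origin. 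Then I would verify Conditions \ref{condition1}--\ref{condition4}: the first-order vanishing $\nabla_\theta p_1(\tilde s) = 0$ and $\partial p_1(\tilde s)/\partial t_m \neq 0$ follow from a direct differentiation of \eqref{primativep1}, using that $\tilde s$ is a genuine saddle point of $p_1$ in the first $m-1$ variables and that $p_1$ depends linearly (to leading order) on $t_m$; the Hessian $A$ in the first $m-1$ directions is nondegenerate (in fact the same matrix, up to scaling, as the one appearing in Lemma \ref{laplaceMathedinbulk1}, whose inverse is recorded in Appendix \ref{appendixa}); the absolute convergence away from $\tilde s$ (Condition \ref{condition2.5}) and the positive-lower-bound estimate (Condition \ref{condition3}) are exactly the content of the estimates established in the proof of Lemma \ref{twopartslemma}; and $q(\tilde s) \neq 0$ is immediate from \eqref{primativeq}.

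Next I would identify the ingredients feeding into the asymptotic formula of Theorem \ref{laplaceMethod1}. The exponential prefactor comes from $e^{-N p_1(\tilde s)}$ with $p_1(\tilde s) = -(m-1)\abs{u}^{2/m} - \abs{u}^{2}\cdot\abs{u}^{-2(m-1)/m} = -m\abs{u}^{2/m}$, giving $e^{N m \abs{u}^{2/m}}$; the $\sqrt{N}$-term comes from $f(\tilde s) = -\frac{u\overline{v'}+v\overline{u}}{\rho}\abs{u}^{-2(m-1)/m}$; the vector $\eta = \nabla_\theta f(\tilde s)$ and the quadratic form $\tfrac12 \eta A^{-1}\eta^T$ produce the Gaussian-type correction $e^{-\frac{m-1}{2m}(\frac{u\overline{v'}+v\overline{u}}{\rho})^2 \abs{u}^{-4(m-1)/m}\abs{u}^{-2/m}}$ once $A^{-1}$ is inserted from Appendix \ref{appendixa}; the value $q(\tilde s) = e^{\frac{v\overline{v'}}{\rho^2}\abs{u}^{-2(m-1)/m}}\abs{u}^{-\frac{2}{m}(a_m + \sum_{k<m}(a_k - a_m + 1))} \cdot(\text{power of }\abs{u})$ supplies the $e^{\frac{v\overline{v'}}{\rho^2}\abs{u}^{-2(m-1)/m}}$ factor and contributes to the overall power $\abs{u}^{-\frac{m-1}{m} - \frac{2}{m}\sum a_k}$; and the determinant factor $\sqrt{\det A}$, together with the prefactor $(z\overline{z'})^{-a_m}\to \abs{u}^{-2a_m}$, combines with the $\abs{u}^{2a_m}$ arising from $q(\tilde s)$ to leave the clean power stated. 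The sign/orientation constant $\epsilon(\alpha, A)$ and the branch of $\sqrt{\det A}$ are pinned down by the orientation of $\gamma_m$ (traversed counterclockwise on the unit circle) and the choice of square root that makes the Gaussian integral over $\Real{R}^{m-1}$ positive; these choices conspire to give the clean factor $i^m \pi/\sqrt{m}$ in the statement (the $\sqrt{m}$ being $\sqrt{\det A}$ up to the normalization of $A$, exactly as in Lemma \ref{laplaceMathedinbulk1}).

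The main obstacle is the bookkeeping of powers of $\abs{u}$ and of the sign/phase constant, rather than any conceptual difficulty: one must carefully track the Jacobian $\abs{\det(\partial \theta_k'/\partial\theta_j)}^{-1}$ from the change of variables in \eqref{ffunction}, the value of $q$ at $\tilde s$ including all the $\abs{u}^{2/m}$ factors from the $m-1$ coordinates, the contribution $(z\overline{z'})^{-a_m} \sim \abs{u}^{-2a_m}$, and the determinant $\sqrt{\det A}$, and check that they collapse to $\abs{u}^{-\frac{m-1}{m} - \frac{2}{m}\sum_{k=1}^m a_k}$ with coefficient $i^m\pi/\sqrt m$. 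A convenient cross-check is the case $v = v' = 0$: then the integral is, up to elementary factors, a contour-integral representation of the same quantity whose asymptotics appears in the weight-function computation of Lemma \ref{laplaceMathedinbulk1} (cf. \cite{akemann2012universal}), and matching the two confirms both the $\abs{u}$-power and the constant. Finally, uniformity in $v, v'$ over compacts is inherited from the uniformity built into Theorem \ref{laplaceMethod1} together with Remark \ref{remark2.4}, since $v, v'$ enter only through the holomorphic, compactly-varying functions $f$ and $q$ and do not move the contours or the critical point.
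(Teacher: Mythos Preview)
Your proposal is correct and follows essentially the same approach as the paper: apply Theorem \ref{laplaceMethod1} at the point $\tilde s=(\abs{u}^{2/m},\dots,\abs{u}^{2/m},1)$, verify its hypotheses via Lemma \ref{twopartslemma}, compute $\partial p_1(\tilde s)/\partial t_k$, the Hessian $A$, and $\eta=\nabla_\theta f(\tilde s)$, then read off the asymptotic formula using Appendix \ref{appendixa} for $A^{-1}$ and $\det A$. The paper's own proof is precisely this computation, stated somewhat more tersely; your additional remarks about bookkeeping, the $v=v'=0$ cross-check, and uniformity are accurate elaborations of the same argument.
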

    \begin{proof}
     Note that   $\tilde{s}: = (\abs{u}^{2/m}, \cdots, \abs{u}^{2/m}, 1)$ is the unique point lying in $\gamma_{1} \times \cdots \times \gamma_{m}$ such that
      \begin{equation}
        \Re{p_{1}(t)} > \Re{p_{1}(\tilde{s})},\quad t \neq \tilde{s}.
      \end{equation}
     Moreover,  for $p_{1}(t)$  Condition \ref{condition3} in Theorem \ref{laplaceMethod1} holds from Lemma \ref{twopartslemma} and others are easy to verify. By Theorem \ref{laplaceMethod1} we could  do some  computation and thus complete the proof.

     Obviously,       \begin{equation*}
        \frac{\partial p_{1}(\tilde{s})}{\partial t_{k}} = \begin{cases}
          0 ,& 1\leq k<m,\\
          -\abs{u}^{2/m} ,& k = m,
        \end{cases}
      \end{equation*}
      and the Hessian of the first $m-1$ variables reads off
      \begin{equation*}
        A = \paren{\frac{\partial^{2} p_{1}(\tilde{s})}{\partial t_{k} \partial t_{j}}}_{1 \leq k,j \leq m-1}
      \end{equation*}
      where
      \begin{equation*}
        \frac{\partial^{2} p_{1}(\tilde{s})}{\partial t_{k} \partial t_{j}} = \begin{cases}
          -\abs{u}^{-2/m} ,& k \neq j,\\
          -2\abs{u}^{-2/m} ,& k = j.
        \end{cases}
      \end{equation*}
      We also have
      \begin{equation*}
        \frac{\partial f(\tilde{s})}{\partial t_{k}} = \frac{u\overline{v'} + v\overline{u}}{\rho} \abs{u}^{-2}.
      \end{equation*}
      Write $\eta = (\frac{\partial f(\tilde{s})}{\partial t_{1}}, \cdots, \frac{\partial f(\tilde{s})}{\partial t_{m-1}})$, application of Theorem \ref{laplaceMethod1} gives
      \begin{equation*}
        \begin{split}
          & (z\overline{z'})^{-a_m}\,\mathrm{P.V.} \int_{\gamma_{1} \times \cdots \gamma_{m}} {q}(t) e^{-\sqrt{N} {f}(t)} e^{-N{p}_{1}(t)} \frac{1}{t_{m}-1} dt \\
          &\sim  (-1)^{m-1} \abs{u}^{-a_m}i\pi    (2\pi/N)^{(m-1)/2} q(\tilde{s}) e^{-N p_{1}(\tilde{s})-\sqrt{N} f(\tilde{s})} e^{\frac{1}{2} \eta A^{-1} \eta^{T}} \frac{1}{\sqrt{\det{A}}}
        \end{split}
      \end{equation*}
      from which the lemma follows.
    \end{proof}

    \begin{lem}\label{laplaceMathedinbulk3}Let
           \begin{equation}
        \Phi := (z\overline{z'})^{-a_m}\,\mathrm{P.V.} \int_{\gamma_{1} \times \cdots \times \gamma_{m}}q(t) e^{-\sqrt{N} {f}(t)} \frac{e^{-N{p}_{2}(t)}}{t_{m}-1}\, d^mt        \end{equation}
      where $p_{2}(t)$, $f(t)$ and $q(t)$ are given by (\ref{primativep2}), (\ref{primativef}) and (\ref{primativeq}) respectively.
      Then the followings hold   uniformly for $v, v'$ in any compact subset of $\mathbb{C}$.
      \begin{enumerate}[label = $\mathrm{(\Roman*)}$]
        \item\label{ginibresum1} For $\abs{u} \neq 0, 1$,
          \begin{align}
                        \Phi &\sim i^{m} \frac{\pi}{\sqrt{m}}    (2\pi/N)^{(m-1)/2} \abs{u}^{-\frac{m-1}{m}-\frac{2}{m}\sum_{k=1}^{m} a_{k}} e^{Nm\abs{u}^{2/m}+\sqrt{N}\frac{u\overline{v'} + v\overline{u}}{\rho} \abs{u}^{-2(m-1)/m}}\nonumber\\
              &\quad \times e^{\frac{v\overline{v'}}{\rho^{2}} \abs{u}^{-2(m-1)/m}}  e^{-\frac{m-1}{2m} \paren{\frac{u\overline{v'} + v\overline{u}}{\rho}}^{2} \abs{u}^{-4(m-1)/m} \abs{u}^{-2/m}}  \mathrm{sign}(\abs{u}^{2/m} - 1).
                     \end{align}
        \item\label{ginibresum2} For $\abs{u} = 1$,
          \begin{align}
              \Phi &\sim i^{m} \frac{\pi}{\sqrt{m}}    (2\pi/N)^{(m-1)/2} \abs{u}^{-\frac{m-1}{m}-\frac{2}{m}\sum_{k=1}^{m} a_{k}}   e^{Nm\abs{u}^{2/m}+\sqrt{N}\frac{u\overline{v'} + v\overline{u}}{\rho} \abs{u}^{-2(m-1)/m}}\nonumber\\
              &\quad \times e^{\frac{v\overline{v'}}{\rho^{2}} \abs{u}^{-2(m-1)/m}}   e^{-\frac{m-1}{2m} \paren{\frac{u\overline{v'} + v\overline{u}}{\rho}}^{2} \abs{u}^{-4(m-1)/m} \abs{u}^{-2/m}}  \mathrm{erf}(\frac{u\overline{v'} + v\overline{u}}{\rho\sqrt{2m}}).
                  \end{align}
      \end{enumerate}
    \end{lem}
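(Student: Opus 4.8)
The plan is to reduce $\Phi$ to a local analysis at the point $\tilde{s}=(\abs{u}^{2/m},\ldots,\abs{u}^{2/m},1)$, exactly as in Lemma \ref{laplaceMathedinbulk2}: the integrand here differs from that one only in that $p_{1}$ is replaced by $p_{2}=p_{1}+\ln t_{m}$ (see \eqref{primativep1}--\eqref{primativep2}), while the simple pole of $1/(t_{m}-1)$ and the critical point $\tilde{s}$ still both sit at $t_{m}=1$. First I would record the local data. Since $\ln t_{m}$ does not involve $t_{1},\ldots,t_{m-1}$, the gradient and Hessian of $p_{2}$ in those $m-1$ variables agree with those of $p_{1}$, so $\partial p_{2}(\tilde{s})/\partial t_{k}=0$ for $k<m$, and $A=\big(\partial^{2}p_{2}(\tilde{s})/\partial t_{k}\partial t_{j}\big)_{1\le k,j\le m-1}$ is the same non-degenerate matrix (inverse in Appendix \ref{appendixa}) used there, with $q(\tilde{s})\neq 0$. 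In the remaining direction $\partial p_{2}(\tilde{s})/\partial t_{m}=1-\abs{u}^{2/m}$, and $p_{2}(\tilde{s})=-m\abs{u}^{2/m}$. Writing $\Phi=(z\overline{z'})^{-a_{m}}\, I(N)$ with $I(N)$ the $\mathrm{P.V.}$ integral, the prefactor is $\abs{u}^{-2a_{m}}\big(1+o(1)\big)$, so it suffices to find the leading asymptotics of $I(N)$. Finally, Lemma \ref{twopartslemma} tells us that the real part of $p_{2}$ attains its unique minimum on $\gamma_{1}\times\cdots\times\gamma_{m}$ at $\tilde{s}$ and that $\Re{p_{2}(t)-p_{2}(\tilde{s})+\big(f(t)-f(\tilde{s})\big)/\sqrt{N}}$ is bounded below by a positive constant off a neighbourhood of $\tilde{s}$; this furnishes the decay and absolute-convergence hypotheses (Condition \ref{condition3}, resp.\ \ref{condition3'}) of both Theorems \ref{laplaceMethod1} and \ref{steepestDecentonEdge}, the remaining hypotheses being immediate.

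For part \ref{ginibresum1}, i.e.\ $\abs{u}\neq 0,1$, the coefficient $\alpha=\partial p_{2}(\tilde{s})/\partial t_{m}=1-\abs{u}^{2/m}$ is nonzero. After the translation $t_{m}\mapsto t_{m}+1$, which moves the pole to the origin and turns the arc $\gamma_{m}$ into a smooth simple curve through $0$, $I(N)$ has precisely the shape treated by Theorem \ref{laplaceMethod1} with $\lambda=N$, with $m-1$ ``Laplace'' variables $t_{1},\ldots,t_{m-1}$ and singular variable $t_{m}$. Substituting $p_{2}(\tilde{s})$, $f(\tilde{s})$, $q(\tilde{s})$ and $\tfrac12\eta A^{-1}\eta^{T}$ (with $\eta=\nabla f(\tilde{s})$ in the first $m-1$ variables) into the conclusion of Theorem \ref{laplaceMethod1} reproduces the stated formula, the factor $\mathrm{sign}(\abs{u}^{2/m}-1)$ being exactly the sign $\epsilon(\alpha,A)$ dictated by the argument of $\alpha$ and the orientation of $\gamma_{m}$.

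For part \ref{ginibresum2}, i.e.\ $\abs{u}=1$, we have $\alpha=1-\abs{u}^{2/m}=0$, so $\nabla p_{2}(\tilde{s})=0$ and the integral falls under Theorem \ref{steepestDecentonEdge} (with $\lambda=N$ and $m-1$ quadratic variables), again after recentring $t_{m}$ at $1$; one should parametrize $\gamma_{m}$ near $0$ so that the rescaled contour is a real segment, which is what ensures that in the new coordinates the block Hessian $B$ of $p_{2}$ has positive-definite real part. The remaining data are $\alpha=\partial^{2}p_{2}(\tilde{s})/\partial t_{m}^{2}=-1$, $\beta=\big(\partial^{2}p_{2}(\tilde{s})/\partial t_{k}\partial t_{m}\big)_{1\le k<m}=(1,\ldots,1)$, the gradient $\eta=\nabla f(\tilde{s})$ with $\widetilde{\eta}=\tfrac{u\overline{v'}+v\overline{u}}{\rho}(1,\ldots,1)$ and $\eta_{m}=-\tfrac{u\overline{v'}+v\overline{u}}{\rho}$, and $A=-(I+J)$ with $J$ the $(m-1)\times(m-1)$ all-ones matrix. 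Using $(I+J)^{-1}=I-\tfrac1m J$, one computes $2\alpha-2\beta A^{-1}\beta^{T}=-\tfrac2m$, $i\eta_{m}-i\beta A^{-1}\widetilde{\eta}^{T}=-\tfrac{i}{m}\tfrac{u\overline{v'}+v\overline{u}}{\rho}$, and $\tfrac12\widetilde{\eta}A^{-1}\widetilde{\eta}^{T}=-\tfrac{m-1}{2m}\big(\tfrac{u\overline{v'}+v\overline{u}}{\rho}\big)^{2}$; hence the error-function argument in Theorem \ref{steepestDecentonEdge} collapses to $\pm\tfrac{u\overline{v'}+v\overline{u}}{\rho\sqrt{2m}}$, the sign fixed by the branch of $\sqrt{\Det{A}}$ and the orientation (consistent with $\mathrm{erf}$ being odd), and assembling all the factors gives the displayed asymptotics, whose prefactor agrees with that of part \ref{ginibresum1}, as it must.

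The routine verifications aside, the genuinely delicate point is twofold: one must check that the elementary pieces $(z\overline{z'})^{-a_{m}}$, $q(\tilde{s})$, $e^{-Np_{2}(\tilde{s})}$, $e^{-\sqrt{N}f(\tilde{s})}$, $e^{\frac12\eta A^{-1}\eta^{T}}$ and $i\pi/\sqrt{\Det{A}}$ recombine into exactly the asserted powers of $\abs{u}$ and the exhibited Gaussian and error-function factors; and, in part \ref{ginibresum2}, that the hypothesis $\Re{B}>0$ genuinely holds in the local coordinates chosen on $\gamma_{m}$ (this is where the strict minimality in Lemma \ref{twopartslemma}, transported through the parametrization of the arcs, is used). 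Uniformity for $v,v'$ in compact subsets of $\mathbb{C}$ is automatic, since these variables enter only through the smooth factors $f$ and $q$, on which they depend continuously and boundedly, and the conclusions of Theorems \ref{laplaceMethod1} and \ref{steepestDecentonEdge} are uniform in such data, cf.\ Remark \ref{remark2.4}.
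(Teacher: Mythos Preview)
Your proposal is correct and follows essentially the same approach as the paper: identify the saddle $\tilde{s}$, verify the decay hypotheses via Lemma \ref{twopartslemma}, then apply Theorem \ref{laplaceMethod1} when $\partial p_{2}(\tilde{s})/\partial t_{m}=1-\abs{u}^{2/m}\neq 0$ and Theorem \ref{steepestDecentonEdge} when it vanishes. Your explicit computation with $A=-(I+J)$ and $(I+J)^{-1}=I-\tfrac1m J$ in Case \ref{ginibresum2} is exactly the algebraic content that the paper leaves implicit after quoting Theorem \ref{steepestDecentonEdge}; the paper's proof is otherwise identical in structure and data.
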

    \begin{proof}
      It is easy to check that  $\tilde{s}: = (\abs{u}^{2/m}, \cdots, \abs{u}^{2/m}, 1)$ is the unique point such that
      \begin{equation}
        \Re{p_{2}(t)} > \Re{p_{2}(\tilde{s})},\quad t \neq \tilde{s}.
      \end{equation}
      And moreover,  for $p_{2}(t)$  Condition \ref{condition3'} in Theorem \ref{steepestDecentonEdge} holds from Lemma \ref{twopartslemma} and others are easy to verify. Our computation is as follows.

      \textbf{Case \ref{ginibresum1}}. Simple calculation shows
      \begin{equation*}
        \frac{\partial p_{2}(\tilde{s})}{\partial t_{k}} = \begin{cases}
          0 ,& 1\leq k <m,\\
          1-\abs{u}^{2/m} ,& k = m,
        \end{cases}
      \end{equation*}
     and the Hessian of first $m-1$ variables
      \begin{equation*}
        A = \paren{\frac{\partial^{2} p_{2}(\tilde{s})}{\partial t_{k} \partial t_{j}}}_{1 \leq k,j \leq m-1},
      \end{equation*}
      where
      \begin{equation*}
        \frac{\partial^{2} p_{2}(\tilde{s})}{\partial t_{k} \partial t_{j}} = \begin{cases}
          -\abs{u}^{-2/m} ,& k \neq j,\\
          -2\abs{u}^{-2/m} ,& k = j.
        \end{cases}
      \end{equation*}

     We also have
      \begin{equation*}
        \frac{\partial f(\tilde{s})}{\partial t_{k}} = \begin{cases}
          \frac{u\overline{v'} + v\overline{u}}{\rho} \abs{u}^{-2}, & 1\leq k<m,\\
          -\frac{u\overline{v'} + v\overline{u}}{\rho} \abs{u}^{-2(m-1)/m}, & k =m.
        \end{cases}
      \end{equation*}
      Let $\eta = (\frac{\partial f(\tilde{s})}{\partial t_{1}}, \cdots, \frac{\partial f(\tilde{s})}{\partial t_{m-1}})$ and $\eta' = \frac{\partial f(\tilde{s})}{\partial t_{m}}$.  For $\abs{u} \neq 1$,  application of  Theorem \ref{laplaceMethod1}   gives
      \begin{equation*}
        \begin{split}
          \Phi&\sim (-1)^{m-1} \abs{u}^{-2a_m} i\pi    (2\pi/N)^{(m-1)/2} q(\tilde{s}) e^{-N p_{2}(\tilde{s})-\sqrt{N} f(\tilde{s})} e^{\frac{1}{2} \eta A^{-1} \eta^{T}} \frac{\mathrm{sign}(\abs{u}^{2/m} - 1)}{\sqrt{\det{A}}}
        \end{split}
      \end{equation*}
      from which Statement \ref{ginibresum1}  follows.

\textbf{Case \ref{ginibresum2}}. Furthermore, we have
      \begin{equation*}
        \beta = (\frac{\partial^{2} p_{2}(\tilde{s})}{\partial t_{1} \partial t_{m}}, \cdots, \frac{\partial^{2} p_{2}(\tilde{s})}{\partial t_{m-1} \partial t_{m}}) = (1, \cdots, 1),
      \end{equation*}
      and
      \begin{equation*}
        \alpha = \frac{\partial^{2} p_{2}(\tilde{s})}{\partial t_{m}^{2}} = -1.
      \end{equation*}
      When $\abs{u}=1$,  by Theorem \ref{steepestDecentonEdge}  we get
      \begin{equation*}
        \begin{split}
          \Phi
          &\sim (-1)^{m-1} \abs{u}^{-2a_m} (2\pi/N)^{(m-1)/2}  q(\tilde{s}) e^{-N p(\tilde{s}) - \sqrt{N} f(\tilde{s})}  e^{\frac{1}{2} {\eta} A^{-1} {\eta}^{T}} \\
          &\quad \times  \,\frac{i\pi}{\sqrt{\det{A}}}\mathrm{erf}\big( \frac{
       i\eta' - i\beta A^{-1} {\eta}^{T}
        }{ \sqrt{2\alpha - 2\beta A^{-1} \beta^{T}}
        }\big)
        \end{split}
      \end{equation*}from which  Statement \ref{ginibresum2}  follows.
    \end{proof}
\subsection{Limiting eigenvalue density}
  In this subsection we prove pointwise convergence of one-point correlation functions as $N$ goes to infinity.  In the parameter-fixed case (cf. Theorem \ref{limitdensityfixed} below),   this result has been given in \cite{akemann2012universal}; see  e.g. \cite{burda2010spectrum} for the different convergence mode.
  \begin{thm}\label{limitdensityfixed}
    For  fixed $a_{1}, \ldots, a_{m} > -1$,    the limiting eigenvalue density
    \begin{equation}
      R_{1}(z): = \lim_{N \to \infty} N^{m-1}R_{N,1}(N^{m/2}z) = \frac{1}{m\pi} \abs{z}^{\frac{2}{m} -2} \chi_{\set{u:\abs{u} < 1}}(z)
    \end{equation}
    holds true for any complex $z\neq 0$  where    $\chi_{\set{u:\abs{u} < 1}}(z)$ is defined as in  \eqref{characteristicfuncion2}.
  \end{thm}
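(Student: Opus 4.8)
The plan is to take the diagonal $z'=z$ in the kernel, write $R_{N,1}(w)=K_N(w,w)=w_m(w)\,T_N(w,w)$ by \eqref{kernelFunctionofProductInducedGaussianWeight} and \eqref{correlationfunction} (recall $w_m$ depends only on $\abs{w}^{2}$, so $w_m(\overline w)=w_m(w)$), set $w=N^{m/2}z$, and simply multiply together the large-$N$ asymptotics already assembled in the preceding lemmas, all specialised to $v=v'=0$ (so that $z=z'=u$ in the notation of Proposition \ref{repsumprop}). For the weight I would invoke Lemma \ref{laplaceMathedinbulk1}: at $v=0$ all of its $v$-dependent exponential factors are trivial, so $w_m(N^{m/2}u)\sim\frac{1}{\pi\sqrt m}N^{\sum_k a_k}\big(\frac{2\pi}{N}\big)^{(m-1)/2}\prod_k\frac{1}{\Gamma(a_k+1)}\,e^{-Nm\abs{u}^{2/m}}\,\abs{u}^{-\frac{m-1}{m}+\frac2m\sum_k a_k}$. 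For the finite sum $T_N(N^{m/2}u,N^{m/2}u)$ I would insert the decomposition of Lemma \ref{twopartslemma}: at $v=v'=0$ its exponentially small term carries the factor $e^{Nm\abs{u}^{2/m}-c_0N}$, which when multiplied by the $e^{-Nm\abs{u}^{2/m}}$ coming from the weight is $O(e^{-c_0N})$, hence contributes nothing to $\lim_N N^{m-1}R_{N,1}(N^{m/2}z)$. Everything therefore reduces to the bracketed principal-value contour integral in \eqref{decayandleading}.

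Next I would evaluate that integral. Its $p_1$-part is handled by Lemma \ref{laplaceMathedinbulk2} and its $p_2$-part by Lemma \ref{laplaceMathedinbulk3}. At $v=v'=0$ both lemmas yield the \emph{same} explicit quantity $i^m\frac{\pi}{\sqrt m}\big(\frac{2\pi}{N}\big)^{(m-1)/2}\abs{u}^{-\frac{m-1}{m}-\frac2m\sum_k a_k}e^{Nm\abs{u}^{2/m}}$, multiplied respectively by the scalar $1$ (for the $p_1$-integral), by $\mathrm{sign}(\abs{u}^{2/m}-1)$ (for the $p_2$-integral when $0<\abs{u}\neq1$, Case \ref{ginibresum1}), and by $\mathrm{erf}(0)=0$ (for the $p_2$-integral when $\abs{u}=1$, Case \ref{ginibresum2}). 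Consequently $(z\overline{z'})^{-a_m}$ times the bracketed integral in \eqref{decayandleading} is $\sim i^m\frac{\pi}{\sqrt m}\big(\frac{2\pi}{N}\big)^{(m-1)/2}\abs{u}^{-\frac{m-1}{m}-\frac2m\sum_k a_k}e^{Nm\abs{u}^{2/m}}\big(1-\mathrm{sign}(\abs{u}^{2/m}-1)\big)$, where the last factor is to be read as $2$ when $0<\abs{u}<1$, as $1$ when $\abs{u}=1$, and as a quantity tending to $0$ when $\abs{u}>1$ (there the two leading terms coincide, so their difference is of strictly smaller order than the common leading term).

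Then it remains to assemble $N^{m-1}\times\big(\text{Lemma \ref{laplaceMathedinbulk1} asymptotics}\big)\times(2\pi i)^{-m}N^{-\sum_k a_k}\prod_j\Gamma(a_j+1)\times\big[(z\overline{z'})^{-a_m}\cdot(\text{bracketed integral in \eqref{decayandleading}})\big]$. Every power of $N$ cancels (indeed $N^{m-1}\cdot N^{\sum a_k}N^{-(m-1)/2}\cdot N^{-\sum a_k}\cdot N^{-(m-1)/2}=1$), the exponentials $e^{-Nm\abs{u}^{2/m}}$ and $e^{Nm\abs{u}^{2/m}}$ cancel, $\prod_k\frac{1}{\Gamma(a_k+1)}$ cancels $\prod_j\Gamma(a_j+1)$, $i^{-m}$ cancels $i^m$, the numerical constants combine to $\frac{1}{m}\cdot\frac{1}{2\pi}$, and the powers of $\abs{u}$ telescope to $\abs{u}^{-\frac{2(m-1)}{m}}=\abs{z}^{\frac2m-2}$. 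One is thus left with $\frac{1}{2\pi m}\abs{z}^{\frac2m-2}\big(1-\mathrm{sign}(\abs{u}^{2/m}-1)\big)$, which equals $\frac{1}{m\pi}\abs{z}^{\frac2m-2}$ for $0<\abs{u}<1$, equals $\frac{1}{2\pi m}\abs{z}^{\frac2m-2}=\frac{1}{2\pi m}$ for $\abs{u}=1$, and equals $0$ for $\abs{u}>1$; comparing with the definition \eqref{characteristicfuncion2} of $\chi_{\set{u:\abs{u}<1}}$, this is precisely $\frac{1}{m\pi}\abs{z}^{\frac2m-2}\chi_{\set{u:\abs{u}<1}}(z)$.

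The analytic heavy lifting being already done in Lemmas \ref{laplaceMathedinbulk1}--\ref{laplaceMathedinbulk3}, the two points needing care are essentially bookkeeping: keeping track of the powers of $\abs{u}$ correctly — in particular the factor $\abs{z}^{2a_m}$ hidden in the integral representation \eqref{productWeightFunctionSpecifiedbyGinibre} of the weight against the $(z\overline{z'})^{-a_m}$ coming out of Proposition \ref{repsumprop}, both of which are already absorbed into the clean $\abs{u}$-powers displayed in the lemmas — and the regime $\abs{u}>1$, where the conclusion $R_1(z)=0$ rests entirely on the $p_1$- and $p_2$-integrals being asymptotically \emph{equal} (ratios tending to $1$, as the lemmas provide), so that their difference, multiplied by a prefactor only polynomially large in $N$ and by $N^{m-1}$, still tends to $0$.
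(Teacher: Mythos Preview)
Your proposal is correct and follows essentially the same route as the paper: specialise Lemma \ref{laplaceMathedinbulk1} (for $w_m$) and Lemmas \ref{twopartslemma}, \ref{laplaceMathedinbulk2}, \ref{laplaceMathedinbulk3} (for $T_N$) to $v=v'=0$, then multiply. Your write-up is in fact more explicit than the paper's about the constant bookkeeping and about why the case $\abs{u}>1$ really gives zero (the paper simply packages all three cases into the indicator $\chi$ in \eqref{density2}), and your handling of that case via ``the $p_1$- and $p_2$-leading terms coincide, so their difference is $o$ of either, hence the product with the polynomially-bounded prefactor tends to $0$'' is the right justification.
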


  \begin{proof}
    Set  $v=0$ in Lemma \ref{laplaceMathedinbulk1}, we obtain
    \begin{equation}
      \begin{split}
        w_{m}(N^{m/2}z) &\sim \frac{1}{\pi\sqrt{m}} ( 2\pi/N)^{(m-1)/2} N^{\sum_{k=1}^{m}a_{k}} \prod_{k=1}^{m} \frac{1}{\Gamma(a_{k}+1)} \\
        &\quad \times  e^{-Nm\abs{u}^{2/m}} \abs{u}^{-\frac{m-1}{m}+\frac{2}{m} \sum_{k=1}^{m} a_{k}}.\label{density1}
      \end{split}
    \end{equation}

    Set $v=v'=0$ in   Lemmas \ref{twopartslemma}, \ref{laplaceMathedinbulk2} and \ref{laplaceMathedinbulk3}, together they show
    \begin{equation}
      \begin{split}
        T_{N}(N^{m/2}z, N^{m/2}z)&\sim \frac{1}{\sqrt{m}  }N^{-\sum_{k=1}^{m}a_{k}} (2\pi N)^{-(m-1)/2}\prod_{k=1}^{m}  \Gamma(a_{k} + 1)   \\
        &\quad \times   \abs{u}^{-\frac{m-1}{m}-\frac{2}{m} \sum_{k=1}^{m} a_{k}} e^{Nm\abs{u}^{2/m}} \chi_{\set{z:\abs{z} < 1}}(u).\label{density2}
      \end{split}
    \end{equation}

    Since    $R_{N,1}(z)=w_{m}(z)T_{N}(z,z)$ by definition, combination of \eqref{density1} and \eqref{density2} completes the proof.
  \end{proof}

 \subsection{Proof of Theorem \ref{univeralitytheoremfixed}}

  \begin{proof}
    Let
    \begin{equation}
      \psi_{N}(v) = e^{-\sqrt{N} \frac{u\overline{v} - v\overline{u}}{\rho} \abs{u}^{-2(m-1)/m}} e^{\frac{m-1}{4m} \frac{(u\overline{v})^{2} - (v\overline{u})^{2}}{\rho^{2}} \abs{u}^{-4 + 2/m}},
    \end{equation}
    define  a diagonal  matrix  by  $D = \mathrm{diag}(\psi_{N}(v_{1}), \cdots, \psi_{N}(v_{n}))$.

    For $0<\abs{u}<1$,   combining Lemmas \ref{twopartslemma}, \ref{laplaceMathedinbulk2}  and Statement \ref{ginibresum1} in   Lemma \ref{laplaceMathedinbulk3}, we obtain
    \begin{multline}
      T_{N}(N^{m/2}z_{k},N^{m/2}z_{j}) \sim (2\pi)^{-(m-1)/2}N^{-(m-1)/2-\sum_{k=1}^{m}a_{k}} \prod_{j=1}^{m}  \Gamma(a_{j} + 1) \\
      \times \frac{1}{\sqrt{m}} \abs{u}^{-\frac{m-1}{m}-\frac{2}{m}\sum_{k=1}^{m} a_{k}} e^{\frac{v\overline{v'}}{\rho^{2}} \abs{u}^{-2(m-1)/m}} \\
      \times \mathrm{exp}\Big\{Nm\abs{u}^{2/m}+\sqrt{N}\frac{u\overline{v'} + v\overline{u}}{\rho} \abs{u}^{-2(m-1)/m}\Big\}\\
      \times \mathrm{exp}\Big\{-\frac{m-1}{2m} \big(\frac{u\overline{v'} + v\overline{u}}{\rho}\big)^{2} \abs{u}^{-4(m-1)/m} \abs{u}^{-2/m}\Big\}.
    \end{multline}

    Now, combining the asymptotics of $T_{N}$ and Lemmas \ref{laplaceMathedinbulk1}, with the correlation kernel  \eqref{kernelFunctionofProductInducedGaussianWeight} in mind we find
    \begin{multline}
           \psi_{N}(v_{k}) K_{N}(N^{m/2}z_{k},N^{m/2}z_{j}) \psi_{N}^{-1}(v_{j})
        \\ \sim N^{-(m-1)} \frac{1}{m\pi} \abs{u}^{-2(m-1)/m} e^{-\frac{1}{2} (\abs{v_{k}}^{2} + \abs{v_{j}}^{2} - 2v_{k}\overline{v_{j}})}.
        \end{multline}
    Thus,
    \begin{equation}
      \begin{split}
        R_{N,n}(N^{m/2}z) &= \Det{D(K_{N}(N^{m/2}z_{k}, N^{m/2}z_{j}))D^{-1}}\\
        &\sim N^{-n(m-1)} {\rho^{2n}} \Det{\frac{1}{\pi} e^{-\frac{1}{2} (\abs{v_{k}}^{2} + \abs{v_{j}}^{2} - 2v_{k}\overline{v_{j}})}}_{1 \leq k, j \leq n}.
      \end{split}
    \end{equation}
    The bulk limit  of \eqref{scalingLimitBulk} then follows.

    Likewise, for $\abs{u}=1$, by Lemmas \ref{twopartslemma}, \ref{laplaceMathedinbulk2}  and Statement \ref{ginibresum2} in   Lemma \ref{laplaceMathedinbulk3},   we know
    \begin{multline}
      T_{N}(N^{m/2}z_{k},N^{m/2}z_{j}) \sim (2\pi)^{-(m-1)/2}N^{-(m-1)/2-\sum_{k=1}^{m}a_{k}} \prod_{j=1}^{m}  \Gamma(a_{j} + 1) \\
      \times \frac{1}{2\sqrt{m}} \abs{u}^{-\frac{m-1}{m}-\frac{2}{m}\sum_{k=1}^{m} a_{k}} e^{\frac{v\overline{v'}}{\rho^{2}} \abs{u}^{-2(m-1)/m}} \mathrm{erf}(\frac{u\overline{v'} + v\overline{u}}{\rho\sqrt{2m}})\\
      \times \mathrm{exp}\Big\{Nm\abs{u}^{2/m}+\sqrt{N}\frac{u\overline{v'} + v\overline{u}}{\rho} \abs{u}^{-2(m-1)/m}\Big\}\\
      \times \mathrm{exp}\Big\{-\frac{m-1}{2m} \big(\frac{u\overline{v'} + v\overline{u}}{\rho}\big)^{2} \abs{u}^{-4(m-1)/m} \abs{u}^{-2/m}\Big\}.
    \end{multline}
    Thus, combining the Lemma \ref{laplaceMathedinbulk1}, we get
    \begin{equation}
      \begin{split}
        &\psi_{N}(v_{k}) K_{N}(N^{m/2}z_{k},N^{m/2}z_{j}) \psi_{N}^{-1}(v_{j}) \\
        &\sim N^{-(m-1)} \frac{1}{2\pi} \rho^{2} e^{-\frac{1}{2} (\abs{v_{k}}^{2} + \abs{v_{j}}^{2} - 2v_{k}\overline{v_{j}})} \mathrm{erfc}(\frac{u\overline{v_{j}} + v_{k} \overline{u}}{\sqrt{2}})
      \end{split}
    \end{equation}
   from which the edge limit follows.

   Uniform convergence   on any compact subset  of $\mathbb{C}$    follows from that of the results in the involved lemmas.
  \end{proof}

  \section{Products of Ginibre matrices with varying parameters}\label{varyingginibre}
    In this  section  we turn to the product of Ginibre matrices with varying parameters  $a_{k} = \delta_{k} N, k=1,\ldots,m$. Without loss of generality, we assume that  $$\delta_{m} = \min \set{\delta_{k}: k = 1, \cdots, m} \geq 0.$$

    As in the fixed parameters case we first prove a series of lemmas and then complete the proofs of  main results.  For the latter use, let $\xi(u)$ be the unique positive solution of algebraic equation
    \begin{equation}\label{polynomialEquation}
      x \prod_{k=1}^{m-1} \paren{(\delta_{k} - \delta_{m}) + \abs{u}^{2} x} - 1 = 0
    \end{equation}
    for any nonzero $u$ in the complex plane.
\subsection{Several lemmas}
    \begin{lem}\label{vWeight} Suppose that $a_{k} = \delta_{k} N, k=1,\ldots,m$. Let\begin{equation}
        s_{k} = \sqrt{\delta_{k} - \delta_{m} + \abs{u}^{2} \xi(u)}, \quad k=1, \cdots, m-1,\label{s-constants}
      \end{equation}
   and  introduce a rescaling variable
      \begin{equation}
        z = u + \frac{v}{\rho \sqrt{N}}
      \end{equation}
      where $\rho$ is a real nonzero  parameter.
      Then  as $N\rightarrow \infty$ we have for $u\neq 0$
      \begin{align}
              w_{m}(N^{m/2} z) &\sim  \frac{\sqrt{\xi(u)}}{\pi \sqrt{1 + \abs{u}^{2} \xi(u) \sum_{k=1}^{m-1} {s_{k}^{-2}}}} \paren{\frac{2\pi}{N}}^{(m-1)/2}\prod_{k=1}^{m} \frac{N^{a_{k}}}{\Gamma(a_{k}   + 1)} \nonumber\\
              &\quad \times  \exp\Big\{-N m\abs{u}^{2}\xi(u)   -N \sum_{k=1}^{m-1} (\delta_{k} - \delta_{m}) \paren{1 - 2\ln s_{k}}\Big\}\nonumber\\
              &\quad \times \exp\Big\{\frac{1}{2} \xi^{2}(u) \frac{(u\overline{v} + v\overline{u})^{2}}{\rho^{2}} \frac{1}{1 + \sum_{k=1}^{m-1}  \abs{u}^{2} \xi(u) s_{k}^{-2} } \sum_{k=1}^{m-1}  s_{k}^{-2}\Big\}\nonumber\\
              &\quad \times
              \abs{z}^{2a_{m}} \exp\Big\{- \frac{\abs{v}^{2}}{\rho^{2}} \xi(u) -\sqrt{N} \frac{u\overline{v} + v\overline{u}}{\rho} \xi(u)\Big\}.
             \end{align}
 Moreover, it holds true uniformly for $v$ in any compact subset of $\mathbb{C}$.
    \end{lem}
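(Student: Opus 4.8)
The plan is to run the same rescaled Laplace argument as in the proof of Lemma \ref{laplaceMathedinbulk1}, the only genuinely new point being that the power‑law weight $q(r)$ now carries an exponent that grows linearly in $N$, so it must be absorbed into the exponential before Laplace's method is applied.

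First I would start from the integral representation \eqref{productWeightFunctionSpecifiedbyGinibre} evaluated at $N^{m/2}z$, so that the singular term becomes $N^{m}\abs{z}^{2}/(r_1^{2}\cdots r_{m-1}^{2})$, put $a_k=\delta_k N$, and split
\[
q(r)=\Bigl(\prod_{k=1}^{m-1}r_k^{-1}\Bigr)\exp\Bigl\{2N\sum_{k=1}^{m-1}(\delta_k-\delta_m)\ln r_k\Bigr\}.
\]
Rescaling $r_k=\sqrt N\,\tau_k$ produces a prefactor $N^{(m-1)/2}N^{N\sum_{k<m}(\delta_k-\delta_m)}$ (the $N^{(m-1)/2}$ from $d^{m-1}r$ cancels the $N^{-(m-1)/2}$ generated by $q$), and, after expanding $\abs{z}^{2}=\abs{u}^{2}+(u\overline v+v\overline u)/(\rho\sqrt N)+\abs{v}^{2}/(\rho^{2}N)$ to separate powers of $N$, turns the integral into $\int_{\Real{R}_+^{m-1}}e^{-N\Psi_0(\tau)-\sqrt N f(\tau)-g(\tau)}\bigl(\prod\tau_k^{-1}\bigr)d^{m-1}\tau$ with
\[
\Psi_0(\tau)=\sum_{k=1}^{m-1}\tau_k^{2}+\frac{\abs{u}^{2}}{\tau_1^{2}\cdots\tau_{m-1}^{2}}-2\sum_{k=1}^{m-1}(\delta_k-\delta_m)\ln\tau_k,
\]
$f(\tau)=\frac{u\overline v+v\overline u}{\rho}\,(\tau_1^{2}\cdots\tau_{m-1}^{2})^{-1}$ and $g(\tau)=\frac{\abs{v}^{2}}{\rho^{2}}(\tau_1^{2}\cdots\tau_{m-1}^{2})^{-1}$.

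Next I would locate the saddle point of $\Psi_0$ on $\Real{R}_+^{m-1}$: the stationarity equations read $\tau_j^{2}=(\delta_j-\delta_m)+\abs{u}^{2}/(\tau_1^{2}\cdots\tau_{m-1}^{2})$, and setting $\tau_1^{2}\cdots\tau_{m-1}^{2}=1/\xi(u)$ with $\xi(u)$ the unique positive root of \eqref{polynomialEquation} one checks these are consistent, so the unique saddle is $\tau^{\ast}=(s_1,\dots,s_{m-1})$ with $s_k$ as in \eqref{s-constants}; it is an interior point because $\delta_m$ is the minimum and $\xi(u)>0$. Since $\Psi_0(\tau)\to+\infty$ as $\tau$ approaches the boundary of $\Real{R}_+^{m-1}$ or $\infty$ (driven by the singular term and, where $\delta_k>\delta_m$, the logarithm), $\Psi_0-\Psi_0(\tau^{\ast})$ has a positive lower bound off any neighbourhood of $\tau^{\ast}$ — this is exactly Condition \ref{condition3} — so the tail is exponentially small and ordinary Laplace's method \cite[p.495]{wong2001asymptotic} applies (there is no $1/t$ singularity here, the weight $\prod\tau_k^{-1}$ being regular and nonzero at $\tau^{\ast}$); the linear‑in‑$\sqrt N$ term $f$ is handled by completing the square exactly as in Remark \ref{remark2.4}. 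A direct computation gives $A=4\bigl(I_{m-1}+\abs{u}^{2}\xi(u)\,\mathbf v\mathbf v^{T}\bigr)$ with $\mathbf v=(s_1^{-1},\dots,s_{m-1}^{-1})^{T}$ and $\eta:=\nabla f(\tau^{\ast})=-\frac{2(u\overline v+v\overline u)}{\rho}\xi(u)\,\mathbf v$, whence $\det A=4^{m-1}\bigl(1+\abs{u}^{2}\xi(u)\sum_k s_k^{-2}\bigr)$ and $\frac12\eta A^{-1}\eta^{T}=\frac12\xi^{2}(u)\frac{(u\overline v+v\overline u)^{2}}{\rho^{2}}\cdot\frac{\sum_k s_k^{-2}}{1+\abs{u}^{2}\xi(u)\sum_k s_k^{-2}}$ by the Sherman–Morrison formula recorded in Appendix \ref{appendixa}.

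Finally I would assemble the pieces. Laplace's method contributes $(2\pi/N)^{(m-1)/2}(\det A)^{-1/2}\prod s_k^{-1}\,e^{-N\Psi_0(\tau^{\ast})-\sqrt N f(\tau^{\ast})-g(\tau^{\ast})}e^{\frac12\eta A^{-1}\eta^{T}}$; the $2^{m-1}$ from \eqref{productWeightFunctionSpecifiedbyGinibre} cancels $4^{-(m-1)/2}=2^{-(m-1)}$ from $\det A$, $\prod s_k^{-1}=\sqrt{\xi(u)}$ since $\prod s_k^{2}=1/\xi(u)$, and the $N$‑powers combine via $(N^{m/2}\abs{z})^{2a_m}\cdot N^{N\sum_{k<m}(\delta_k-\delta_m)}=\abs{z}^{2a_m}\prod_{k=1}^{m}N^{a_k}$, reproducing (together with $\prod_k\Gamma(a_k+1)^{-1}$ and $1/\pi$) the stated prefactor. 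Substituting $\Psi_0(\tau^{\ast})=m\abs{u}^{2}\xi(u)+\sum_{k<m}(\delta_k-\delta_m)(1-2\ln s_k)$, $f(\tau^{\ast})=\frac{u\overline v+v\overline u}{\rho}\xi(u)$, $g(\tau^{\ast})=\frac{\abs{v}^{2}}{\rho^{2}}\xi(u)$ and the value of $\frac12\eta A^{-1}\eta^{T}$ above yields the claimed asymptotics; uniformity in $v$ on compacta is immediate because $\tau^{\ast}$, $A$ and the exponential rate $\Psi_0(\tau^{\ast})$ do not depend on $v$, which enters only through $f$, $g$ and the subleading Taylor remainders. I expect the main obstacle to be purely bookkeeping: correctly tracking the $N$‑powers created by merging $q(r)$ into the exponent and by the rescaling $r_k=\sqrt N\,\tau_k$, and verifying the saddle/decay conditions that legitimize Laplace's method; everything else duplicates the computation of Lemma \ref{laplaceMathedinbulk1}.
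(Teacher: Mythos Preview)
Your proposal is correct and follows essentially the same approach as the paper's own proof: rescale the integration variables by $\sqrt{N}$, absorb the $N$-dependent part of the monomial weight into the exponent to form the phase $\Psi_0$ (the paper's $p$), identify the unique interior saddle $(s_1,\dots,s_{m-1})$ via \eqref{polynomialEquation}, compute the Hessian (a rank-one perturbation of $4I$), its determinant and inverse via Appendix~\ref{appendixa}, and apply Laplace's method with the $\sqrt{N}$-linear perturbation $f$ handled as in Remark~\ref{remark2.4}. The only cosmetic difference is that you isolate the $|v|^2$ contribution as a separate $g(\tau)$ while the paper carries it inside $q(r)$; the bookkeeping of $N$-powers and the final assembly are identical.
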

    \begin{proof}
      The weight function (\ref{productWeightFunctionSpecifiedbyGinibre}) can be rewritten as
      \begin{multline}
                 w_{m}(N^{m/2} z)  =  \abs{z}^{2a_{m}} \frac{2^{m-1}}{\pi} \prod_{k=1}^{m} \frac{N^{ a_{k}  }}{\Gamma(a_{k}  + 1)} \\
           \quad \times \int_{\Real{R}_{+}^{m-1}} q(r) e^{-\sqrt{N} f(r)} e^{-N p(r)} d^{m-1}r,
             \end{multline}
      where
      \begin{equation*}
        p(r) = \sum_{k=1}^{m-1} \big(r_{k}^{2} - 2(\delta_{k} - \delta_{m}) \ln{r_{k}}\big) + \frac{\abs{u}^{2}}{\prod_{k=1}^{m-1} r_{k}^{2}},
      \end{equation*}
      \begin{equation*}
        q(r) = \exp\big\{- \frac{\abs{v}^{2}}{\rho^{2}} \frac{1}{\prod_{k=1}^{m-1} r_{k}^{2}}\big\} \,\prod_{k=1}^{m-1} \frac{1}{r_{k}},
      \end{equation*}
      and
      \begin{equation*}
        f(r) = \frac{u\overline{v} + v\overline{u}}{\rho}  \prod_{k=1}^{m-1}  \frac{1}{r_{k}^{2}}.
      \end{equation*}

     We know from the first derivatives of $p(r)$
      \begin{equation*}
        \frac{\partial p}{\partial r_{k}} = 2r_{k} - 2(\delta_{k} - \delta_{m}) \frac{1}{r_{k}} - 2\frac{\abs{u}^{2}}{r_{k} \prod_{j=1}^{m-1} r_{j}^{2}}
      \end{equation*}
that $s := (s_{1}, \cdots, s_{m-1})$
      is the unique saddle point of $p(r)$ in $\Real{R}_{+}^{m-1}$. Also noting \eqref{polynomialEquation} we have
      \begin{equation*}
        p(s) = m\abs{u}^{2}\xi(u) + \sum_{k=1}^{m-1} (\delta_{k} - \delta_{m}) \paren{1 - \ln\big(\delta_{k} - \delta_{m}  + \abs{u}^{2} \xi(u)\big)}.
      \end{equation*}

      On the other hand, we can easily calculate the Hessian matrix of $p(r)$  at the point $s=(s_{1}, \cdots, s_{m-1})$ as follows
      \begin{equation*}
        A := \paren{\frac{\partial^{2} p(s)}{\partial r_{k} \partial r_{j}}}_{1 \leq k, j \leq m-1}
      \end{equation*}
      where
      \begin{equation*}
        \frac{\partial^{2} p(s)}{\partial r_{k} \partial r_{j}} = \begin{cases}
          4\abs{u}^{2} \xi(u) \frac{1}{s_{k} s_{j}}, & k \neq j;\\
          4 + 4 \abs{u}^{2} \xi(u) \frac{1}{s_{k}^{2}}, & k = j.
        \end{cases}
      \end{equation*}
          Therefore, $A$ is positive definite, its inverse reads off (cf. Appendix \ref{appendixa})
      \begin{align}
       & A^{-1} = \frac{1}{4} \frac{1}{1 + \abs{u}^{2} \xi(u) \sum_{k=1}^{m-1} {s_{k}^{-2}}}\times \nonumber \\
        & \begin{pmatrix}
          1 + \sum_{k \neq 1} \frac{\abs{u}^{2} \xi(u)}{s_{k}^{2}} &- \frac{\abs{u}^{2} \xi(u)}{s_{1}s_{2}}& \cdots & - \frac{\abs{u}^{2} \xi(u)}{s_{1}s_{m-1}}\\
         - \frac{\abs{u}^{2} \xi(u)}{s_{2}s_{1}}   &1 + \sum_{k \neq 2} \frac{\abs{u}^{2} \xi(u)}{s_{k}^{2}}& \cdots & - \frac{\abs{u}^{2} \xi(u)}{s_{2}s_{m-1}}\\
         \vdots& \vdots & \ddots & \vdots \\
          - \frac{\abs{u}^{2} \xi(u)}{s_{m-1}s_{1}} &- \frac{\abs{u}^{2} \xi(u)}{s_{m-1}s_{1}}& \cdots & 1 + \sum_{k \neq m-1} \frac{\abs{u}^{2} \xi(u)}{s_{k}^{2}}
        \end{pmatrix}
      \end{align}
      and the determinant of the Hessian   at $s$ is
      \begin{equation*}
        \begin{split}
          \Det{A}  = 4^{m-1} \Big(1 + \abs{u}^{2} \xi(u) \sum_{k=1}^{m-1} {s_{k}^{-2}}\Big).
        \end{split}
      \end{equation*}

      Similarly, we can get  $$
        f(s) = \xi(u) \frac{u\overline{v} + v\overline{u}}{\rho}$$
      and  the gradient of $f(r)$ at $s$
      \begin{equation*}
        \eta := \nabla f(s)=-2 \xi(u) \frac{u\overline{v} + v\overline{u}}{\rho} \Big(\frac{1}{s_{1}}, \cdots, \frac{1}{s_{m-1}}\Big).
      \end{equation*}
      Thus simple manipulation shows
      \begin{equation*}
        \eta A^{-1} \eta^{T} = \xi^{2}(u) \frac{(u\overline{v} + v\overline{u})^{2}}{\rho^{2}} \frac{1}{1 + \abs{u}^{2} \xi(u) \sum_{k=1}^{m-1} s_{k}^{-2}} \sum_{k=1}^{m-1} {s_{k}^{-2}}.
      \end{equation*}

      We see that $p(r) + \frac{1}{\sqrt{N}}f(r)$ has a positive lower bound  on the  region of integration except for a neighbour of $s$ for sufficiently  large $N$. As in the proof of Theorem \eqref{laplaceMethod1} (see Condition \ref{condition3}  therein), the integral except for the part in that neighbour of $s$ is exponentially decaying. Furthermore, Laplace's approximation \cite[p.495]{wong2001asymptotic} can be used to give
      \begin{equation*}
        \begin{split}
          w_{m}(N^{m/2} z) &\sim \abs{z}^{2a_{m}} \frac{2^{m-1}}{\pi} \prod_{k=1}^{m} \frac{N^{ a_{k}  }}{\Gamma(a_{k}  + 1)} \\
          &\times \,\paren{\frac{2\pi}{N}}^{(m-1)/2}  \big(\det{A}\big)^{-1/2} q(s)       e^{-Np(s) -\sqrt{N}f(s)} e^{\frac{1}{2}  \eta A^{-1} \eta^{T}}
        \end{split}
      \end{equation*}
       from which the desired result follows.

                Obviously, the asymptotics holds true uniformly for $v$ in any compact subset of $\mathbb{C}$.
    \end{proof}

    Next we turn to consider the asymptotics of the truncated series given in \eqref{finitesum}.
    \begin{lem} \label{twopartslemmavarying} For a fixed nonzero complex number $u$, let $\rho=\rho(u)\neq 0$. Suppose that $a_{j} = \delta_{j} N$ where $\delta_{j} \geq 0$ ($j=1,\ldots,m$) and
    \begin{equation}
      \tilde{s}_{k} =  \delta_{k} - \delta_{m} + \abs{u}^{2} \xi(u), \quad k=1, \cdots, m-1\  \text{while}\   \tilde{s}_{m}=1.\label{tildes-constants}
    \end{equation}
         Introduce rescaling variables
  \begin{equation} \label{rescalingzvarying}
    z = u + \frac{v}{\rho \sqrt{N}}, \qquad z'= u + \frac{v'}{\rho \sqrt{N}}
  \end{equation} where  $v,v'$ lie in a compact set of $\Real{C}$,   then for sufficiently large $N$ there exists $c_0>0$ such that
  \begin{align}
    &T_{N}(N^{m/2}z, N^{m/2}z')  =   (2\pi i)^{-m}N^{-\sum_{k=1}^{m}a_{k}}(z\overline{z'})^{-a_m}\prod_{j=1}^{m}  \Gamma(a_{j} + 1)\nonumber \\ &  \times \Big(e^{-Np_{1}(\tilde{s})-c_0 N}  e^{-\sqrt{N} f(\tilde{s})}O(1) \nonumber\\
    &\hspace{3cm}+\int_{\gamma_{1} \times \cdots \times \gamma_{m}}q(t) e^{-\sqrt{N} f(t)} \frac{e^{-Np_{1}(t)} - e^{-Np_{2}(t)}}{t_{m}-1}\, d^mt\Big) \label{decayandleadingvarying}
  \end{align}
  where
  \begin{equation}\label{primativep1varying}
  {p}_{1}(t) = -\sum_{k=1}^{m-1}\big(t_{k} - (\delta_{k} - \delta_{m})\ln{t_{k}}\big) - \frac{\abs{u}^{2}t_{m}}{t_{1} \cdots t_{m-1}} + \delta_{m} \ln{t_{m}}
   ,
  \end{equation}

  \begin{equation}\label{primativep2varying}
      {p}_{2}(t) = -\sum_{k=1}^{m-1}\big(t_{k} - (\delta_{k} - \delta_{m})\ln{t_{k}}\big) - \frac{\abs{u}^{2}t_{m}}{t_{1} \cdots t_{m-1}} + (1 + \delta_{m}) \ln{t_{m}},
    \end{equation}
    \begin{equation}\label{primativefvarying}
      {f}(t) = -\frac{u\overline{v'} + v\overline{u}}{\rho} \frac{t_{m}}{t_{1} \cdots t_{m-1}},
    \end{equation}
    and
    \begin{equation} \label{primativeqvarying}
      {q}(t) =   \prod_{k=1}^{m-1} t_{k}^{-1} e^{\frac{v\overline{v'}}{\rho^{2}} \frac{t_{m}}{t_{1} \cdots t_{m-1}}}.
    \end{equation}
 Here    $\gamma_{k}=\{t_k:\abs{t_k}= \tilde{s}_{k}, |t_k- \tilde{s}_{k}|<\varepsilon_{0}\}$, $k=1,\ldots,m$ for some $\varepsilon_{0}>0$. Moreover, both $\Re{p_{1}(t)}$ and $\Re{p_{2}(t)}$
 attain their unique minimum   over $\gamma_{1} \times \cdots \times \gamma_{m}$ at the point $  \tilde{s}  = ( \tilde{s}_{1}, \cdots,  \tilde{s}_{m-1},\tilde{s}_{m}).$
  \end{lem}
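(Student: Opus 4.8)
The plan is to follow the proof of Lemma~\ref{twopartslemma} step by step, the only new feature being that the powers $t_{k}^{-a_{k}}=t_{k}^{-\delta_{k}N}$ must now be carried inside the large-$N$ exponential rather than left in the amplitude. First I would start from the Hankel integral representation \eqref{saddleform} of Proposition~\ref{repsumprop} (whose contours ``can be chosen as needed''), taking the circular part of $\mathcal{C}_{k}$ to have radius $r_{k}=\tilde{s}_{k}=\delta_{k}-\delta_{m}+\abs{u}^{2}\xi(u)$ for $1\le k\le m-1$ and $r_{m}=\xi(u)$. The point of this choice is that the defining equation \eqref{polynomialEquation} of $\xi(u)$ says exactly $\xi(u)\prod_{k=1}^{m-1}\tilde{s}_{k}=1$, hence $r_{1}\cdots r_{m}=1$; consequently $\abs{t_{1}\cdots t_{m}}\ge 1$ on the whole of $D=\mathcal{C}_{1}\times\cdots\times\mathcal{C}_{m}$ (with equality on the product of circular arcs), which is precisely what makes the elementary bound $\abs{1-(t_{1}\cdots t_{m})^{-N}}/\abs{t_{1}\cdots t_{m}-1}\le N$ available. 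Deforming the original contours to these radii is legitimate because the integrand is holomorphic on $(\mathbb{C}\setminus(-\infty,0])^{m}$ away from the hypersurface $t_{1}\cdots t_{m}=1$, and the homotopy can be run keeping $\prod_{k}r_{k}=1$ so that no singularity is crossed.

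Next I would rewrite the exponent. Setting $a_{k}=\delta_{k}N$ turns $\prod_{k}t_{k}^{-a_{k}}$ in $Q$ into $e^{-N\sum_{k}\delta_{k}\ln t_{k}}$, so the two exponentials in \eqref{saddleform} carry $\hat{P}_{j}(t):=P_{j}(t)+\sum_{k}\delta_{k}\ln t_{k}$, $j=1,2$, with $P_{1},P_{2}$ as in \eqref{primativeP1}--\eqref{primativeP2}. Then I would perform the change of variables $(t_{1},\dots,t_{m-1},t_{m})\mapsto(t_{1},\dots,t_{m-1},t_{m}/(t_{1}\cdots t_{m-1}))$, which is biholomorphic on a neighbourhood of the relevant part of $D$ (there $t_{1}\cdots t_{m-1}\ne 0$), has Jacobian $1/(t_{1}\cdots t_{m-1})$, and satisfies $t_{1}\cdots t_{m-1}\cdot t_{m}^{\mathrm{old}}=t_{m}^{\mathrm{new}}$, so that the singular denominator $t_{1}\cdots t_{m}-1$ becomes simply $t_{m}-1$. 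A direct substitution then shows that $\hat{P}_{1},\hat{P}_{2}$ become $p_{1},p_{2}$ of \eqref{primativep1varying}--\eqref{primativep2varying}, that $F$ becomes $f$ of \eqref{primativefvarying}, and that $e^{v\overline{v'}t_{m}^{\mathrm{old}}/\rho^{2}}$ together with the Jacobian assembles into $q$ of \eqref{primativeqvarying}; moreover the all-positive point $t_{0}:=(\tilde{s}_{1},\dots,\tilde{s}_{m-1},\xi(u))$ of $D$ is mapped to $\tilde{s}=(\tilde{s}_{1},\dots,\tilde{s}_{m-1},1)$, since $\tilde{s}_{1}\cdots\tilde{s}_{m-1}=1/\xi(u)$.

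I would then split $I=\mathrm{P.V.}\int_{D_{\varepsilon_{0}}}+\mathrm{P.V.}\int_{D\setminus D_{\varepsilon_{0}}}$ with $D_{\varepsilon_{0}}$ as in the statement and $\varepsilon_{0}$ small enough that $D_{\varepsilon_{0}}$ lies in the product of circular arcs. On $D\setminus D_{\varepsilon_{0}}$ one argues as for Condition~\ref{condition3} of Theorem~\ref{laplaceMethod1}: $\hat{P}_{1}$ is separable in the original variables, so on the circular arcs $\Re{\hat{P}_{1}}=\sum_{k<m}(-\tilde{s}_{k}\cos\phi_{k}+\delta_{k}\ln\tilde{s}_{k})+(-\abs{u}^{2}\xi(u)\cos\phi_{m}+\delta_{m}\ln\xi(u))$ is strictly minimised at $t_{0}$, while along the Hankel rays $\Re{\hat{P}_{1}}\to+\infty$ linearly and dominates the at-most-linear growth of $F/\sqrt{N}$; since $\Re{\hat{P}_{2}}-\Re{\hat{P}_{1}}=\ln\abs{t_{1}\cdots t_{m}}\ge 0$ on $D$, the same holds for $\hat{P}_{2}$. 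Hence for large $N$ there is $c_{0}>0$ with $\Re{\hat{P}_{j}(t)-\hat{P}_{j}(t_{0})+(F(t)-F(t_{0}))/\sqrt{N}}\ge 2c_{0}$ on $D\setminus D_{\varepsilon_{0}}$, and combining with the bound $N$ on the singular factor produces the term $e^{-Np_{1}(\tilde{s})-c_{0}N}e^{-\sqrt{N}f(\tilde{s})}O(1)$ (using $\hat{P}_{1}(t_{0})=p_{1}(\tilde{s})$ and $F(t_{0})=f(\tilde{s})$). On $D_{\varepsilon_{0}}$ the change of variables above carries the domain onto $\gamma_{1}\times\cdots\times\gamma_{m}$ with $\gamma_{k}=\{\abs{t_{k}}=\tilde{s}_{k}\}$ near $\tilde{s}_{k}$ and $\gamma_{m}=\{\abs{t_{m}}=1\}$ near $1$, giving the remaining integral in \eqref{decayandleadingvarying}. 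Finally the extremum statement follows by transporting the same computation: with $t_{k}=\tilde{s}_{k}e^{i\phi_{k}}$ on $\gamma_{k}$ and $t_{m}=e^{i\phi_{m}}$ on $\gamma_{m}$ one gets $\Re{p_{1}}=\sum_{k<m}(-\tilde{s}_{k}\cos\phi_{k}+(\delta_{k}-\delta_{m})\ln\tilde{s}_{k})-\abs{u}^{2}\xi(u)\cos(\phi_{m}-\sum_{k<m}\phi_{k})$, strictly minimised at $\phi_{1}=\cdots=\phi_{m}=0$, i.e.\ at $\tilde{s}$; and since $\Re{\ln t_{m}}=0$ on $\gamma_{m}$ one has $\Re{p_{2}}=\Re{p_{1}}$ on $\gamma_{1}\times\cdots\times\gamma_{m}$, so $p_{2}$ attains its minimum there too.

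The step I expect to be the real obstacle is the uniform-in-$N$ lower bound on $D\setminus D_{\varepsilon_{0}}$: one must simultaneously control the $O(1/\sqrt{N})$ perturbation from $F$ (bounded only on compact parts of the contour, growing along the Hankel rays), the non-compactness of $D$, and the singular factor near $t_{1}\cdots t_{m}=1$ — the last handled by $\abs{1-w^{-N}}/\abs{w-1}\le N$ for $\abs{w}\ge 1$, which is the sole reason the normalisation $r_{1}\cdots r_{m}=1$ forced by \eqref{polynomialEquation} is indispensable. Once this is in place, the rest is the bookkeeping already done for Lemma~\ref{twopartslemma}.
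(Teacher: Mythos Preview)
Your proposal is correct and follows essentially the same route as the paper's proof: start from the Hankel-contour representation of Proposition~\ref{repsumprop} with the radii $r_{k}=\tilde{s}_{k}$ for $k<m$ and $r_{m}=\xi(u)=1/(\tilde{s}_{1}\cdots\tilde{s}_{m-1})$ (so that $\abs{t_{1}\cdots t_{m}}\ge 1$ on $D$ and the bound $\abs{1-(t_{1}\cdots t_{m})^{-N}}/\abs{t_{1}\cdots t_{m}-1}\le N$ applies), absorb the powers $t_{k}^{-\delta_{k}N}$ into the exponent, localise to $D_{\varepsilon_{0}}$ via the uniform lower bound on $\Re{\hat{P}_{j}+F/\sqrt{N}}$ outside, and then change variables $t_{m}\mapsto t_{m}/(t_{1}\cdots t_{m-1})$ to obtain the integral over $\gamma_{1}\times\cdots\times\gamma_{m}$. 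Your explicit verification that $\Re{p_{1}}=\Re{p_{2}}$ on the arcs (since $\Re{\ln t_{m}}=0$ there) and your identification of the unique minimum at $\tilde{s}$ match what the paper asserts, only with more detail; the one harmless over-caution is worrying about the hypersurface $t_{1}\cdots t_{m}=1$ during the contour deformation---the apparent singularity there is removable, so no constraint on the homotopy is actually needed.
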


    \begin{proof} By Proposition \ref{repsumprop}  we have
  \begin{align}
      T_{N}(N^{m/2}z, N^{m/2}z')  &=   (2\pi i)^{-m}N^{-\sum_{k=1}^{m}a_{k}}(z\overline{z'})^{-a_m}\prod_{j=1}^{m}  \Gamma(a_{j} + 1)\nonumber \\ &\times \int_{\mathcal{C}_{1} \times \cdots \times \mathcal{C}_{m}}Q(t) e^{-\sqrt{N} {F}(t)} \frac{e^{-N{P}_{1}(t)} - e^{-N{P}_{2}(t)}}{t_1 \cdots t_{m}-1}\, d^mt \label{saddleform}
    \end{align}
    where
  \begin{equation}\label{primativeP1}
    {P}_{1}(t) = -\sum_{k=1}^{m-1}\big(t_{k} -  \delta_{k}\ln{t_{k}}\big)  -  \big(\abs{u}^{2}t_m-\delta_m \ln t_{m}\big),
  \end{equation}
  \begin{equation}\label{primativeP2}
    {P}_{2}(t) = -\sum_{k=1}^{m-1}\big(t_{k} -  \delta_{k}\ln{t_{k}}\big)-  \big(\abs{u}^{2}t_m-\delta_m \ln t_{m}\big) + \ln(t_1 \cdots t_{m}),
  \end{equation}
  and
  \begin{equation}\label{primativeQ}
    {F}(t) = -  \frac{u\overline{v'} + v\overline{u}}{\rho} t_{m}, \qquad Q(t) =  e^{\frac{v\overline{v'}}{\rho^{2}}  t_{m}}.
  \end{equation}
  Here    $\mathcal{C}_{k}$    is a path first going from   $-\infty$  to $r_{k}e^{i(-\pi+\theta_0)}$  ($0<\theta_0<\pi/2$) along the line parallel to the x-axis,  then going anticlockwise   along  the circle with radius of $r_k$  to $r_k e^{i(\pi-\theta_0)}$ and returning to  $-\infty$  along the line parallel to the x-axis. Choose $r_k=\tilde{s}_k$ for any $k<m$ and $r_m=1/(\tilde{s}_1 \cdots \tilde{s}_{m-1})$.

The subsequent procedure is the same as that in Lemma \ref{twopartslemma}.
  Let $D=\mathcal{C}_{1} \times \cdots \times \mathcal{C}_{m}$ and  $t_{0} = (\tilde{s}_1, \cdots, \tilde{s}_{m-1}, 1/(\tilde{s}_1 \cdots \tilde{s}_{m-1}))$. Note that  for all $t\in D$   the  following relations hold true
      \begin{equation}
        \Re{P_{1}(t)} > \Re{P_{1}(t_{0})}\  \mathrm{and}\   \Re{P_{2}(t)} > \Re{P_{2}(t_{0})} \ \mathrm{for}\ t \neq t_{0}.
      \end{equation}
     Moreover, by choosing a small $\varepsilon_{0}>0$, setting $$D_{\varepsilon_{0}}=\Big\{t\in D :\abs{t_1\cdots t_m-1}<\varepsilon_0; \abs{t_k-\tilde{s}_k}<\varepsilon_{0}, 1\leq k< m-1\Big\},$$ one can verify  that  there exists a positive number $c_0$ such that for sufficiently large $N$ and for every $t\in D \backslash D_{\varepsilon_0}$
     $$ \Re{P_1(t)-P_1(t_0)+\frac{F(t)-F(t_0)}{\sqrt{N}}} \geq 2 c_0$$
     and   $$ \Re{P_2(t)-P_2(t_0)+\frac{F(t)-F(t_0)}{\sqrt{N}}} \geq 2c_0.$$

     The remaining part is the same as that in Lemma \ref{twopartslemma} and then  the lemma immediately follows.
%
  \end{proof}

    \begin{lem}\label{vSumP1} With the same notation and assumptions as in Lemma \ref{twopartslemmavarying}, let
   $$\Upsilon =(z\overline{z'})^{a_{m}}\,\mathrm{P.V.} \int_{\gamma_{1} \times \cdots \gamma_{m}} {q}(t) e^{-\sqrt{N} {f}(t)} e^{-N{p}_{1}(t)} \frac{1}{t_{m}-1} d^{m}t,$$
    then the following asymptotics holds true uniformly for $v, v'$ in any compact subset of $\mathbb{C}$.
      \begin{enumerate}[label = $\mathrm{(\Roman*)}$]
        \item\label{vStatement1} For $\abs{u} \neq \sqrt{\delta_{1} \cdots \delta_{m}}$  and $u\neq 0$,
          \begin{equation*}
            \begin{split}
              &           \Upsilon\sim - i^{m}\pi \paren{\frac{2\pi}{N}}^{(m-1)/2} \frac{\mathrm{sign}\big( \delta_{m} - \abs{u}^{2} \xi(u)\big)}{\sqrt{1 + \abs{u}^{2}\xi(u) \sum_{k=1}^{m-1} \frac{1}{\tilde{s}_{k}}}} \sqrt{\xi(u)}  \\
              &\quad \times \exp\Big\{\xi(u)\frac{v\overline{v'}}{\rho^{2}}-\frac{1}{2} \xi^{2}(u) \frac{(u\overline{v'} + v\overline{u})^{2}}{\rho^{2}} \frac{\sum_{k=1}^{m-1} \frac{1}{\tilde{s}_{k}}}{1+\abs{u}^{2} \xi(u) \sum_{k=1}^{m-1} \frac{1}{\tilde{s}_{k}}}   \Big\}  \\
              &\quad \times \exp\Big\{\sqrt{N} \xi(u) \frac{u\overline{v'} + v\overline{u}}{\rho}+Nm \abs{u}^{2} \xi(u) + N\sum_{k=1}^{m-1} (\delta_{k} - \delta_{m}) \paren{1 - \ln \tilde{s}_{k}}\Big\}.
            \end{split}
          \end{equation*}
        \item\label{vStatement2} For  $\abs{u} = \sqrt{\delta_{1} \cdots \delta_{m}} \neq 0$,
          \begin{equation*}
            \begin{split}
              &\Upsilon\sim              i^{m}\pi \paren{\frac{2\pi}{N}}^{(m-1)/2} \frac{1}{\sqrt{1 + \abs{u}^{2} \xi(u) \sum_{k=1}^{m-1} \frac{1}{\tilde{s}_{k}}}}\sqrt{\xi(u)} \\
              &\quad \times \exp\Big\{\xi(u)\frac{v\overline{v'}}{\rho^{2}}-\frac{1}{2} \xi^{2}(u) \frac{(u\overline{v'} + v\overline{u})^{2}}{\rho^{2}} \frac{\sum_{k=1}^{m-1} \frac{1}{\tilde{s}_{k}}}{1+\abs{u}^{2} \xi(u) \sum_{k=1}^{m-1} \frac{1}{\tilde{s}_{k}}}   \Big\}  \\
              &\quad \times \exp\Big\{\sqrt{N} \xi(u) \frac{u\overline{v'} + v\overline{u}}{\rho}+Nm \abs{u}^{2} \xi(u) + N\sum_{k=1}^{m-1} (\delta_{k} - \delta_{m}) \paren{1 - \ln \tilde{s}_{k}}\Big\}\\
                            & \quad \times \mathrm{erf}\Big(\frac{\sqrt{\xi(u)}}{\sqrt{1+\abs{u}^{2} \xi(u) \sum_{k=1}^{m-1} \frac{1}{\tilde{s}_{k}}}} \frac{u\overline{v'} + v\overline{u}}{\rho\sqrt{2\delta_{1} \cdots \delta_{m}}}\Big).
            \end{split}
          \end{equation*}
      \end{enumerate}
    \end{lem}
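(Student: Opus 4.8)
The strategy is the one already used for the $p_1$-integral in the fixed-parameter case (Lemma \ref{laplaceMathedinbulk2}), now supplemented by a second regime: because a positive $\delta_m$ creates an inner (hard) edge, the direction associated with the singularity $t_m=1$ can degenerate, and this is exactly the dichotomy $\abs{u}\neq\sqrt{\delta_1\cdots\delta_m}$ versus $\abs{u}=\sqrt{\delta_1\cdots\delta_m}$. Lemma \ref{twopartslemmavarying} has already carried out the localization: $T_N(N^{m/2}z,N^{m/2}z')$ is, up to explicit prefactors, a sum of an exponentially small term and the P.V.\ integral over $\gamma_1\times\cdots\times\gamma_m$ of $q(t)e^{-\sqrt N f(t)}\big(e^{-Np_1(t)}-e^{-Np_2(t)}\big)/(t_m-1)$, with $\Re{p_{1}}$ attaining its unique minimum on the contour at $\tilde{s}=(\tilde{s}_1,\ldots,\tilde{s}_{m-1},1)$. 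Since $\Upsilon$ is, up to an explicit prefactor, exactly the $p_1$-term of that integral, it remains to evaluate it by the saddle-point theorems of Section \ref{saddlesection}.

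First I would record the Taylor data of $p_1,f,q$ at $\tilde{s}$. From the defining relation $\xi(u)\prod_{k=1}^{m-1}\tilde{s}_k=1$ one gets directly $q(\tilde{s})=\xi(u)e^{\xi(u)v\overline{v'}/\rho^2}$, $f(\tilde{s})=-\xi(u)(u\overline{v'}+v\overline{u})/\rho$ and $p_1(\tilde{s})=-m\abs{u}^2\xi(u)-\sum_{k=1}^{m-1}(\delta_k-\delta_m)(1-\ln\tilde{s}_k)$, which account for the exponential factors in both statements. The first $m-1$ partials of $p_1$ vanish at $\tilde{s}$, while a short computation gives $\partial p_1(\tilde{s})/\partial t_m=\delta_m-\abs{u}^2\xi(u)$; using the defining equation of $\xi(u)$, this is zero precisely when $\abs{u}=\sqrt{\delta_1\cdots\delta_m}$. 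The Hessian $A$ of the first $m-1$ variables is a diagonal-plus-rank-one matrix of the type treated in Appendix \ref{appendixa} — essentially the same $A$ as in the proof of Lemma \ref{vWeight} — so $\det A=(-1)^{m-1}\xi(u)\big(1+\abs{u}^2\xi(u)\sum_{k=1}^{m-1}\tilde{s}_k^{-1}\big)$, $A^{-1}$ is explicit, and $\tfrac12\eta A^{-1}\eta^T=-\tfrac12\xi^2(u)\tfrac{(u\overline{v'}+v\overline{u})^2}{\rho^2}\cdot\tfrac{\sum_k\tilde{s}_k^{-1}}{1+\abs{u}^2\xi(u)\sum_k\tilde{s}_k^{-1}}$.

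For statement \ref{vStatement1} the singular direction is nondegenerate, so I would apply Theorem \ref{laplaceMethod1} with $(\theta;t)=(t_1,\ldots,t_{m-1};t_m-1)$ and $\lambda=N$: Condition \ref{condition1} holds with $\alpha=\delta_m-\abs{u}^2\xi(u)\neq0$, Condition \ref{condition2} since $\det A\neq0$, Condition \ref{condition2.5} trivially (the contours are compact arcs, so the only issue is the integrable P.V.\ singularity at $t_m=1$), Condition \ref{condition3} is exactly the extremum statement of Lemma \ref{twopartslemmavarying}, and $q(\tilde{s})\neq0$. Collecting $q(\tilde{s})/\sqrt{\det A}$ and folding $\sqrt{(-1)^{m-1}}=i^{m-1}$ into the $i\pi$ of Theorem \ref{laplaceMethod1} to produce the $i^m\pi$, one obtains the stated form, with $\epsilon(\alpha,A)$ determined to be $-\mathrm{sign}(\delta_m-\abs{u}^2\xi(u))$ for a fixed branch of $\sqrt{\det A}$ — exactly the mechanism by which the $(-1)^{m-1}$ arose in Lemma \ref{laplaceMathedinbulk2}, where $\alpha=-\abs{u}^{2/m}$ has constant sign. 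For statement \ref{vStatement2} the full gradient of $p_1$ vanishes at $\tilde{s}$, so one instead applies Theorem \ref{steepestDecentonEdge} (with $m$ there replaced by $m-1$ and the singular coordinate $t_{m+1}\leftrightarrow t_m-1$), whose hypotheses are checked exactly as in Lemma \ref{laplaceMathedinbulk3}; computing $\alpha=\partial^2 p_1(\tilde{s})/\partial t_m^2=-\delta_m$ and $\beta=\abs{u}^2\xi(u)(\tilde{s}_1^{-1},\ldots,\tilde{s}_{m-1}^{-1})$ and using the inner-edge identity $\abs{u}^2\xi(u)=\delta_m$ (hence $\abs{u}^2=\delta_1\cdots\delta_m$) gives $2\alpha-2\beta A^{-1}\beta^T=-2\delta_m/\big(1+\delta_m\sum_k\tilde{s}_k^{-1}\big)$ and reduces the error-function argument to $\dfrac{\sqrt{\xi(u)}}{\sqrt{1+\abs{u}^2\xi(u)\sum_k\tilde{s}_k^{-1}}}\cdot\dfrac{u\overline{v'}+v\overline{u}}{\rho\sqrt{2\delta_1\cdots\delta_m}}$. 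Uniformity for $v,v'$ in compact sets is inherited from Lemma \ref{twopartslemmavarying} and from the two saddle-point theorems.

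The routine but delicate part is the bookkeeping: turning the explicit Taylor data — $p_1(\tilde{s})$, $f(\tilde{s})$, $q(\tilde{s})$, $\det A$, $A^{-1}$, $\eta A^{-1}\eta^T$, and for \ref{vStatement2} also $\alpha,\beta,\beta A^{-1}\beta^T,\beta A^{-1}\eta^T$ — into the stated closed forms, and tracking the phase and sign conventions (the power of $i$, the branch of $\sqrt{\det A}$, the $\mathrm{sign}(\delta_m-\abs{u}^2\xi(u))$ factor, and the oddness of $\mathrm{erf}$). The Appendix \ref{appendixa} formulas for a diagonal-plus-rank-one matrix are what make the $(m-1)$-dimensional Gaussian integral explicit; without them the lemma would be unmanageable. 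No genuinely new obstacle arises beyond what was already met in Lemmas \ref{laplaceMathedinbulk2}, \ref{laplaceMathedinbulk3} and \ref{vWeight}: the whole content is the reduction to Theorems \ref{laplaceMethod1} and \ref{steepestDecentonEdge} followed by the simplification.
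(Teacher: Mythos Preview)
Your proposal is correct and follows essentially the same approach as the paper: localize via Lemma \ref{twopartslemmavarying}, compute the Taylor data of $p_1,f,q$ at $\tilde{s}$ (including $\det A$, $A^{-1}$, $\eta A^{-1}\eta^T$ via Appendix \ref{appendixa}), then apply Theorem \ref{laplaceMethod1} when $\alpha=\partial p_1(\tilde{s})/\partial t_m=\delta_m-\abs{u}^2\xi(u)\neq0$ and Theorem \ref{steepestDecentonEdge} when it vanishes. The paper's proof carries out exactly these computations with the same intermediate quantities you list.
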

    \begin{proof}
     Let $\tilde{s}_{k}$ be defined in \eqref{tildes-constants}, write  $\tilde{s} = (\tilde{s}_{1}, \cdots, \tilde{s}_{m})$. Then we know that   the following inequality
             \begin{equation}
        \Re{p_{1}(t)} > \Re{p_{1}(\tilde{s})}
      \end{equation}
    holds   for all $t \in \gamma_{1} \times \cdots \times \gamma_{m}$ except for   $\tilde{s}$, and also  \begin{equation*}
        p_{1}(\tilde{s}) = -m \abs{u}^{2} \xi(u) - \sum_{k=1}^{m-1} (\delta_{k} - \delta_{m}) \paren{1 - \ln \tilde{s}_{k}}.
      \end{equation*}

      Notice
      \begin{equation}
        \frac{\partial p_{1}}{\partial t_{k}} = \begin{cases}
          -1 + (\delta_{k} - \delta_{m}) \frac{1}{t_{k}} + \frac{\abs{u}^{2}t_{m}}{t_{1} \cdots t_{m-1} t_{k}},& k <m,\\
          -\frac{\abs{u}^{2}}{t_{1} \cdots t_{m-1}} + \delta_{m} \frac{1}{t_{m}}, & k=m,
        \end{cases}
      \end{equation}
we have       \begin{equation*}
        \frac{\partial p_{1}(\tilde{s})}{\partial t_{k}} = 0, \quad  k=1, \cdots, m-1.
      \end{equation*}

     On the other hand, for the Hessian matrix
      \begin{equation*}
        A = \paren{\frac{\partial^{2} p_{1} (\tilde{s})}{\partial t_{k} \partial t_{j}}}_{1 \leq k, j \leq m-1},
      \end{equation*}
      where
      \begin{equation*}
        \frac{\partial^{2} p_{1} (\tilde{s})}{\partial t_{k} \partial t_{j}} = \begin{cases}
          - \abs{u}^{2} \xi(u) \frac{1}{\tilde{s}_{k} \tilde{s}_{j}}, & k \neq j,\\
          -\frac{1}{\tilde{s}_{k}} - \abs{u}^{2} \xi(u) \frac{1}{\tilde{s}_{k}^{2}}, & k = j,
        \end{cases}
      \end{equation*}
      simple manipulation gives the determinant
      \begin{equation}
        \Det{A} = (-1)^{m-1} \xi(u) \big(1 + \xi(u) \abs{u}^{2} \sum_{k=1}^{m-1} \frac{1}{\tilde{s}_{k}}\big)
      \end{equation}
      and the inverse
      \begin{equation}
        A^{-1} = -\mathrm{diag}(\tilde{s}_{1}, \cdots, \tilde{s}_{m-1}) + \frac{\abs{u}^{2} \xi(u)}{1+\abs{u}^{2} \xi(u) \sum_{k = 1}^{m-1} \frac{1}{\tilde{s}_{k}}} 1_{(m-1) \times (m-1)},
      \end{equation}
      where $1_{(m-1) \times (m-1)}$ is the $(m-1) \times (m-1)$ matrix with all entries $1$.
      Hence if letting    $\eta = (\frac{\partial f (\tilde{s})}{\partial t_{1}}, \cdots, \frac{\partial f (\tilde{s})}{\partial t_{m-1}})$ where
      \begin{equation}
        \frac{\partial f (\tilde{s})}{\partial t_{k}} = \xi(u) \frac{u\overline{v'} + v\overline{u}}{\rho} \frac{1}{\tilde{s}_{k}},
      \end{equation}
     then
      \begin{equation}
        \eta A^{-1} \eta^{T} = - \xi^{2}(u) \frac{(u\overline{v'} + v\overline{u})^{2}}{\rho^{2}} \frac{1}{1+\abs{u}^{2} \xi(u) \sum_{k=1}^{m-1} \frac{1}{\tilde{s}_{k}}} \sum_{k=1}^{m-1} \frac{1}{\tilde{s}_{k}}.
      \end{equation}

       When $\abs{u} \neq \sqrt{\delta_{1} \cdots \delta_{m}}$  and $u\neq 0$, let $\alpha: = \frac{\partial p_{1}(\tilde{s})}{\partial t_{m}} = \delta_{m} - \abs{u}^{2} \xi(u)$, application of  Theorem \ref{laplaceMethod1} shows
      \begin{equation*}
        \begin{split}
          \Upsilon &\sim (-1)^{m} i\pi \paren{\frac{2\pi}{N}}^{(m-1)/2} \xi(u)\, e^{\xi(u)\frac{v\overline{v'}}{\rho^{2}}} e^{\sqrt{N} \xi(u) \frac{u\overline{v'} + v\overline{u}}{\rho}}\\
          &\quad \times e^{Nm \abs{u}^{2} \xi(u) + N\sum_{k=1}^{m-1} (\delta_{k} - \delta_{m}) \paren{1 - \ln s_{k}}} e^{\frac{1}{2} \eta A^{-1} \eta^{T}} \frac{\mathrm{sign}(\Re{\alpha})}{\sqrt{\Det{A}}}
        \end{split}
      \end{equation*}
     from which Statement \ref{vStatement1} follows.

      When $\abs{u} = \sqrt{\delta_{1} \cdots \delta_{m}} \neq 0$, we then have
      \begin{equation*}
        \frac{\partial p_{1}(\tilde{s})}{\partial t_{k}} = 0, \quad, k=1, \cdots, m.
      \end{equation*}
      Write       \begin{equation*}
        \beta: = (\frac{\partial^{2} p_{1}(\tilde{s})}{\partial t_{1} \partial t_{m}}, \cdots, \frac{\partial^{2} p_{1}(\tilde{s})}{\partial t_{m-1} \partial t_{m}}) = \abs{u}^{2} \xi(u) (\frac{1}{\tilde{s}_{1}}, \cdots, \frac{1}{\tilde{s}_{m-1}}),
      \end{equation*}
      and
      \begin{equation*}
        \alpha': = \frac{\partial^{2} p_{1}(\tilde{s})}{\partial t_{m}^{2}} = -\delta_{m}, \qquad
        \eta_{m}: = \frac{\partial f(\tilde{s})}{\partial t_{m}} = - \xi(u) \frac{u\overline{v'} + v\overline{u}}{\rho}.
      \end{equation*}
      Following  Theorem \ref{steepestDecentonEdge}, we obtain
      \begin{equation*}
        \begin{split}
         \Upsilon
          &\sim (-1)^{m-1} \paren{\frac{2\pi}{N}}^{(m-1)/2} \xi(u) e^{\xi(u)\frac{v\overline{v'}}{\rho^{2}}} e^{\sqrt{N} \xi(u) \frac{u\overline{v'} + v\overline{u}}{\rho}} e^{\frac{1}{2} \eta A^{-1} \eta^{T}} \frac{\mathrm{sign}(\Re{\alpha'})}{\sqrt{\Det{A}}}\\
          &  \times e^{Nm \abs{u}^{2} \xi(u) + N\sum_{k=1}^{m-1} (\delta_{k} - \delta_{m}) \paren{1 - \ln \tilde{s}_{k}}}  i \pi \,
           \mathrm{erf}\big( \frac{
        i\eta_{m} - i\beta A^{-1}  \eta^{T}
        }{ \sqrt{2\alpha' - 2\beta A^{-1} \beta^{T}}
        }\big)
        \end{split}
      \end{equation*}
     from which Statement \ref{vStatement2} follows.

     Obviously,   the asymptotics holds true  uniformly for $v, v'$ in any compact subset of $\mathbb{C}$.
    \end{proof}

    Similar calculations as in the proof of Lemma \ref{vSumP1} may afford us the  asymptotics corresponding to the function $p_{2}(t)$.
    \begin{lem}\label{vSumP2}
      With the same notation and assumptions as in Lemma \ref{twopartslemmavarying}, let
   $$\Phi =(z\overline{z'})^{a_{m}}\,\mathrm{P.V.} \int_{\gamma_{1} \times \cdots \gamma_{m}} {q}(t) e^{-\sqrt{N} {f}(t)} e^{-N{p}_{2}(t)} \frac{1}{t_{m}-1} d^{m}t,$$
    then the following asymptotics holds true uniformly for $v, v'$ in any compact subset of $\mathbb{C}$.
      \begin{enumerate}[label = $\mathrm{(\Roman*)}$]
        \item\label{vStatement3} For  $\abs{u} \neq \sqrt{(1+\delta_{1}) \cdots (1+\delta_{m})}$ and $u\neq 0$,
          \begin{equation*}
            \begin{split}
              & \Phi \sim
              - i^{m}\pi \paren{\frac{2\pi}{N}}^{(m-1)/2} \frac{\mathrm{sign}\big( 1+\delta_{m} - \abs{u}^{2} \xi(u)\big)}{\sqrt{1 + \abs{u}^{2}\xi(u) \sum_{k=1}^{m-1} \frac{1}{\tilde{s}_{k}}}} \sqrt{\xi(u)}  \\
              &\quad \times \exp\Big\{\xi(u)\frac{v\overline{v'}}{\rho^{2}}-\frac{1}{2} \xi^{2}(u) \frac{(u\overline{v'} + v\overline{u})^{2}}{\rho^{2}} \frac{\sum_{k=1}^{m-1} \frac{1}{\tilde{s}_{k}}}{1+\abs{u}^{2} \xi(u) \sum_{k=1}^{m-1} \frac{1}{\tilde{s}_{k}}}   \Big\}  \\
              &\quad \times \exp\Big\{\sqrt{N} \xi(u) \frac{u\overline{v'} + v\overline{u}}{\rho}+Nm \abs{u}^{2} \xi(u) + N\sum_{k=1}^{m-1} (\delta_{k} - \delta_{m}) \paren{1 - \ln \tilde{s}_{k}}\Big\}.
            \end{split}
          \end{equation*}
        \item\label{vStatement4} For $\abs{u} = \sqrt{(1+\delta_{1}) \cdots (1+\delta_{m})}$,
          \begin{equation*}
            \begin{split}
              &\Phi\sim    i^{m}\pi \paren{\frac{2\pi}{N}}^{(m-1)/2} \frac{1}{\sqrt{1 + \abs{u}^{2}\xi(u) \sum_{k=1}^{m-1} \frac{1}{\tilde{s}_{k}}}}\sqrt{\xi(u)} \\
              &\quad \times \exp\Big\{\xi(u)\frac{v\overline{v'}}{\rho^{2}}-\frac{1}{2} \xi^{2}(u) \frac{(u\overline{v'} + v\overline{u})^{2}}{\rho^{2}} \frac{\sum_{k=1}^{m-1} \frac{1}{\tilde{s}_{k}}}{1+\abs{u}^{2} \xi(u) \sum_{k=1}^{m-1} \frac{1}{\tilde{s}_{k}}}   \Big\}  \\
              &\quad \times \exp\Big\{\sqrt{N} \xi(u) \frac{u\overline{v'} + v\overline{u}}{\rho}+Nm \abs{u}^{2} \xi(u) + N\sum_{k=1}^{m-1} (\delta_{k} - \delta_{m}) \paren{1 - \ln \tilde{s}_{k}}\Big\}\\
                            & \quad \times \mathrm{erf}\Big(\frac{\sqrt{\xi(u)}}{\sqrt{1+\abs{u}^{2} \xi(u) \sum_{k=1}^{m-1} \frac{1}{\tilde{s}_{k}}}} \frac{u\overline{v'} + v\overline{u}}{\rho\sqrt{2(1+\delta_{1}) \cdots (1+\delta_{m})}}\Big).
            \end{split}
          \end{equation*}
      \end{enumerate}
    \end{lem}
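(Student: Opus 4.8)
The plan is to follow the proof of Lemma~\ref{vSumP1} essentially verbatim, exploiting the identity $p_{2}(t)=p_{1}(t)+\ln t_{m}$. Since the extra term depends on $t_{m}$ alone, the first $m-1$ partial derivatives of $p_{2}$ agree with those of $p_{1}$, so the point $\tilde{s}=(\tilde{s}_{1},\ldots,\tilde{s}_{m-1},1)$ from Lemma~\ref{twopartslemmavarying} is still stationary in the first $m-1$ coordinates; moreover $p_{2}(\tilde{s})=p_{1}(\tilde{s})$ because $\ln\tilde{s}_{m}=\ln 1=0$, and Lemma~\ref{twopartslemmavarying} already guarantees that $\Re{p_{2}(t)}$ attains its unique minimum on $\gamma_{1}\times\cdots\times\gamma_{m}$ at $\tilde{s}$, which supplies the away-from-saddle decay and absolute-convergence hypotheses of Theorems~\ref{laplaceMethod1} and \ref{steepestDecentonEdge}. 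Consequently the Hessian $A$ of the first $m-1$ variables, its inverse, the mixed block $\beta=\abs{u}^{2}\xi(u)(\tilde{s}_{1}^{-1},\ldots,\tilde{s}_{m-1}^{-1})$, and the vector $\eta=(\partial f(\tilde{s})/\partial t_{1},\ldots,\partial f(\tilde{s})/\partial t_{m-1})$ are all identical to their counterparts in Lemma~\ref{vSumP1}. The only new computation, which uses $\xi(u)\prod_{k=1}^{m-1}\tilde{s}_{k}=1$, is in the $t_{m}$ direction: $\partial p_{2}(\tilde{s})/\partial t_{m}=1+\delta_{m}-\abs{u}^{2}\xi(u)$ and $\partial^{2}p_{2}(\tilde{s})/\partial t_{m}^{2}=-(1+\delta_{m})$, while $\partial f(\tilde{s})/\partial t_{m}=-\xi(u)(u\overline{v'}+v\overline{u})/\rho$ is unchanged.

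With this in hand, when $\abs{u}\neq\sqrt{(1+\delta_{1})\cdots(1+\delta_{m})}$ and $u\neq 0$ the first $t_{m}$-derivative $\alpha:=1+\delta_{m}-\abs{u}^{2}\xi(u)$ is nonzero, so Theorem~\ref{laplaceMethod1} applies directly with $(\theta;t)=(t_{1},\ldots,t_{m-1};t_{m}-1)$ and yields the factor $\mathrm{sign}(1+\delta_{m}-\abs{u}^{2}\xi(u))/\sqrt{\Det{A}}$; inserting $\Det{A}$, $\eta A^{-1}\eta^{T}$, $p_{2}(\tilde{s})$ and $q(\tilde{s})$ gives Statement~\ref{vStatement3}. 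When $\abs{u}=\sqrt{(1+\delta_{1})\cdots(1+\delta_{m})}$ one checks, again from $\xi(u)\prod_{k=1}^{m-1}\tilde{s}_{k}=1$, that $\alpha$ vanishes and that $\tilde{s}_{k}=1+\delta_{k}$, so the singular direction is degenerate and Theorem~\ref{steepestDecentonEdge} is the appropriate tool; feeding in $\alpha'=-(1+\delta_{m})$, $\beta$, $\eta_{m}=\partial f(\tilde{s})/\partial t_{m}$ and simplifying $i\eta_{m}-i\beta A^{-1}\eta^{T}$ and $2\alpha'-2\beta A^{-1}\beta^{T}$ produces the error-function argument containing $\sqrt{2(1+\delta_{1})\cdots(1+\delta_{m})}$, which is Statement~\ref{vStatement4}.

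Everything here is routine once Lemmas~\ref{twopartslemmavarying} and \ref{vSumP1} are granted; the two points that warrant care are the algebraic reduction of $2\alpha'-2\beta A^{-1}\beta^{T}$ to the stated closed form --- carried out with the explicit $A^{-1}=-\mathrm{diag}(\tilde{s}_{1},\ldots,\tilde{s}_{m-1})+\frac{\abs{u}^{2}\xi(u)}{1+\abs{u}^{2}\xi(u)\sum_{k=1}^{m-1}\tilde{s}_{k}^{-1}}\,1_{(m-1)\times(m-1)}$ together with the edge relation $\abs{u}^{2}\xi(u)=1+\delta_{m}$ --- and the bookkeeping of the overall branch/sign constant $\epsilon$, which depends on the orientation of the contours $\gamma_{k}$ and on the choice of $\sqrt{\Det{A}}$, exactly as in Lemma~\ref{vSumP1}. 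Uniformity in $v,v'$ over compact subsets of $\mathbb{C}$ is inherited from the corresponding uniform statements in Theorems~\ref{laplaceMethod1} and \ref{steepestDecentonEdge}.
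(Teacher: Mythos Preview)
Your proposal is correct and matches the paper's approach exactly; the paper itself simply states that ``similar calculations as in the proof of Lemma~\ref{vSumP1} may afford us the asymptotics corresponding to the function $p_{2}(t)$'' without elaboration, and your write-up fills in precisely those details---identifying $p_{2}(t)=p_{1}(t)+\ln t_{m}$, recomputing only the $t_{m}$-derivatives, and applying Theorems~\ref{laplaceMethod1} and~\ref{steepestDecentonEdge} accordingly.
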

\subsection{Proofs of Theorems \ref{limitdensityvarying}  and \ref{univeralitytheoremvarying}}
    We are now  ready to prove the limiting eigenvalue density and local universality in the  parameter-varying case.

    \begin{proof}[Proof of Theorems \ref{limitdensityvarying}]
      Taking $v = 0$ in Lemma \ref{vWeight}, we have

      \begin{equation}
        \begin{split}
          w_{m}(N^{m/2}z) &\sim \sqrt{\xi(u)} \abs{z}^{2a_{m}} \paren{\frac{2\pi}{N}}^{(m-1)/2}\prod_{k=1}^{m} \frac{N^{a_{k}}}{\Gamma(a_{k}   + 1)}\\
              & \times \frac{1}{\pi \sqrt{1 + \abs{u}^{2}\xi(u) \sum_{k=1}^{m-1} \frac{1}{\tilde{s}_{k}}}}\exp\big\{-N m\abs{u}^{2}\xi(u)\big\} \\
              &  \times \exp\Big\{ -N \sum_{k=1}^{m-1} (\delta_{k} - \delta_{m}) \big(1 - \ln (\delta_{k} - \delta_{m} + \abs{u}^{2} \xi(u) )\big)\Big\}. \label{varyingdensity1}
                     \end{split}
      \end{equation}

      Taking $v, v' = 0$ in Lemmas \ref{twopartslemmavarying}, \ref{vSumP1} and \ref{vSumP2}, we get the large $N$ asymptotics of  \eqref{decayandleadingvarying}
      \begin{equation}
        \begin{split}
          T_{N}(N^{m/2}z, &N^{m/2}z)  \sim  \pi \abs{z}^{-2a_{m}} \sqrt{\xi(u)} \paren{\frac{2\pi}{N}}^{(m-1)/2} \prod_{k=1}^{m} \frac{\Gamma(a_{k} N + 1)}{2\pi N^{a_{k} N}}\\
          &  \times  \exp\Big\{ N\sum_{k=1}^{m-1} (\delta_{k} - \delta_{m})
           \big(1 - \ln (\delta_{k} - \delta_{m} + \abs{u}^{2} \xi(u) )\big)
          \Big\}\\
          & \times \exp\big\{Nm \abs{u}^{2} \xi(u)\big\} \frac{\chi_{\set{z:\sqrt{\delta_{1} \cdots \delta_{m}} < \abs{z} < \sqrt{(1+\delta_{1}) \cdots (1+\delta_{m})}}}}{\sqrt{1 + \abs{u}^{2}\xi(u) \sum_{k=1}^{m-1} \frac{1}{\tilde{s}_{k}}}}. \label{varyingdensity2}
        \end{split}
      \end{equation}

      Since
        $R_{N,1}(N^{m/2}z)=   w_{m}(N^{m/2}z) T_{N}(N^{m/2}z, N^{m/2}z)$ and $z = u$, combination of \eqref{varyingdensity1} and \eqref{varyingdensity2} completes the proof.
    \end{proof}

    \begin{proof}[Proof of Theorem \ref{univeralitytheoremvarying}]
      Let
      \begin{align*}
          \psi_{N}(v) &= \exp\Big\{
            \delta_{m} \sqrt{N} \frac{u\overline{v} - v\overline{u}}{2\abs{u}^{2} \rho} -  \sqrt{N} \xi_{m}(u) \frac{u\overline{v} - v\overline{u}}{2\rho}
           -  \delta_{m} \frac{(u\overline{v})^{2} - (v\overline{u})^{2}}{4\abs{u}^{4} \rho^{2}} \Big\}\\
          &  \times \exp\Big\{\frac{1}{4} \xi_{m}^{2}(u) \frac{(u\overline{v})^{2} - (v\overline{u})^{2}}{\rho^{2}} \frac{\sum_{k=1}^{m-1} \frac{1}{\delta_{k} - \delta_{m} + \abs{u}^{2} \xi_{m}(u)}}{1+\abs{u}^{2} \xi(u) \sum_{k=1}^{m-1} \frac{1}{\delta_{k} - \delta_{m} + \abs{u}^{2} \xi_{m}(u)}}\Big\},
        \end{align*}
      and let the diagonal matrix   $D = \mathrm{diag}(\psi_{N}(v_{1}), \cdots, \psi_{N}(v_{n}))$.

      In the bulk,    combining    Lemma \ref{vWeight}, Lemma \ref{twopartslemmavarying}, Statement \ref{vStatement1} in Lemma \ref{vSumP1} and  \ref{vStatement3} in Lemma \ref{vSumP2}, we obtain
      \begin{equation*}
               \psi_{N}(v_{k}) K_{N}(N^{m/2}z_{k},N^{m/2}z_{j}) \psi_{N}^{-1}(v_{j})          \sim N^{-(m-1)} \frac{1}{\pi} \rho^{2}
            e^{-\frac{1}{2} (\abs{v_{k}}^{2} + \abs{v_{j}}^{2} - 2v_{k}\overline{v_{j}})}.
             \end{equation*}
     Furthermore,
      \begin{equation*}
        \begin{split}
          R_{N,n}(N^{m/2}z_1, \ldots,N^{m/2}z_n)& = \Det{D(K_{N}(N^{m/2}z_{k}, N^{m/2}z_{j}))D^{-1}}\\
          &\sim N^{-n(m-1)} {\rho^{2n}} \Det{\frac{1}{\pi} e^{-\frac{1}{2} (\abs{v_{k}}^{2} + \abs{v_{j}}^{2} - 2v_{k}\overline{v_{j}})}}_{1 \leq k, j \leq n}.
        \end{split}
      \end{equation*}
     This completes the bulk limit of Theorem   \ref{univeralitytheoremvarying}.

     Likewise, combining    Lemmas \ref{vWeight},   \ref{twopartslemmavarying},  \ref{vSumP1} and    \ref{vSumP2} we can prove the inner and outer edge cases.
    \end{proof}

 \section{Products of truncated unitary matrices} \label{truncatedproductsection}
 We take  the same procedure as    in Section \ref{varyingginibre} to tackle the product of independent  truncated unitary matrices  $X^{(m)} = X_{m} X_{m-1} \cdots X_{1}$  where each $X_k$ has the joint probability  density  proportional to \begin{equation} \big(\Det{X^{*}_{k}X_{k}}\big)^{a_k} \big(\Det{I_{N}-X^{*}_{k}X_{k}}\big)^{L_k-N}1_{\{  I_{N}-X^{*}_{k}X_{k}>0 \}},   \end{equation}
  where all $a_k>-1$ and $L_k\geq N$. 
   The joint eigenvalue density function of $X^{(m)}$ reads off
  \begin{equation}
    P_{N}(z_1,\ldots,z_N)=   \frac{1}{N!} \Det{K_{N}(z_{j}, z_{k})}_{1 \leq j, k \leq N}
  \end{equation}
   where the correlation kernel  equals to
  \begin{equation}\label{kernelfortruncated}
    K_{N}(z, z') = \sqrt{w_{m}(z)w_{m}(\overline{z'})}\,T_{N}(z,z')
  \end{equation}
  with all $\abs{z_j}<1$; as a matter of fact, the density (5.2) is applicable for all $L_k>0$ and includes the  product of  truncated rectangular matrices, see \cite{adhikari2013determinantal,akemann2014universal,ipsen2014weak}.  Further, the $n$-point  correlation functions can be expressed as
  \begin{align}
    R_{N,n}(z_{1}, \cdots, z_{n}) =\Det{K_{N}(z_{j}, z_{k})}_{1 \leq j, k \leq n}.\label{correlationfunctionfortruncated}
  \end{align}
  If letting $a_k=\sigma_{k}N + b_{k}$ and $L_k=\tau_k N$ with $\sigma_{1}, \ldots, \sigma_{m}\geq 0$ and $\tau_{1}, \ldots, \tau_{m}> 0$,
   the weight function can be written as the   integral (cf. Section 2.2 \cite{akemann2014universal} and Eq. (99) \cite{ipsen2014weak})
  \begin{multline}\label{weightTr}
    w_{m}(z) = \frac{2^{m-1} \abs{z}^{2\sigma_{m}N+2b_m}}{\pi} \prod_{k=1}^{m} \frac{\Gamma((\sigma_{k} + \tau_{k})N + b_{k} + 2)}{\Gamma(\sigma_{k} N + b_{k} + 1) \Gamma(\tau_{k}N + 1)}\\ \times \int_{[0,1]^{m-1}} e^{-Np(r)} q(r) d^{m-1}r,
  \end{multline}
  where
  \begin{equation}\label{pforWeightTr}
    \begin{split}
      p(r) &= - \sum_{k=1}^{m-1} \big(2(\sigma_{k} - \sigma_{m})\ln{r_{k}} + \tau_{k} \ln(1-r_{k}^{2})_{+}\big) - \tau_{m} \ln{\Big(1-\frac{\abs{z}^{2}}{\prod_{k=1}^{m-1} r_{k}^{2}}\Big)_{+}},
    \end{split}
  \end{equation}
  and
  \begin{equation}
    q(r) = \prod_{k=1}^{m-1} r_{k}^{2(b_{k}-b_m)-1}.
  \end{equation}
Also, the truncated series  equals to
     \begin{equation}\label{finitesumTr}
       \begin{split}
         T_{N}(z,z') &= \prod_{k=1}^{m} \frac{\Gamma(\sigma_{k}N + b_{k} + 1)}{\Gamma((\sigma_{k} + \tau_{k}) N + b_{k} + 2)} \sum_{l=0}^{N-1} \prod_{k=1}^{m} \frac{\Gamma((\sigma_{k} + \tau_{k}) N + b_{k} + l + 2)}{\Gamma(\sigma_{k} N + b_{k} + l + 1)} (z \overline{z'})^{l}.
       \end{split}
     \end{equation}

   \subsection{Integral representations}\label{repsumpropTr}
     As in the Ginibre case  we first rewrite the truncated series $T_{N}(z,z')$  as  multivariate  integrals.
     \begin{prop}
       For a fixed nonzero complex number $u$, let $\rho = \rho(u) \neq 0$. Introduce the rescaling variables
       \begin{equation}\label{rescalingzTr}
         z = u + \frac{v}{\rho \sqrt{N}}, \quad z' = u + \frac{v'}{\rho \sqrt{N}},
       \end{equation}
       where $v$ and $v'$  lie in a compact set of $\Real{C}$. Then
       \begin{align}
           T_{N}(z,z')&= N^{2m + \sum_{k=1}^{m} \tau_{k}} (z\overline{z'})^{-(\sigma_{m}N + b_{m})} (2\pi i)^{-m} \prod_{k=1}^{m} \frac{\Gamma(\sigma_{k}N + b_{k} + 1)}{\Gamma((\sigma_{k} + \tau_{k})N + b_{k} + 2)} \nonumber\\
           &\quad \times \int_{(0,\infty)^{m} \times \mathcal{C}_{1} \times \cdots \times \mathcal{C}_{m}} Q(t) e^{-\sqrt{N}F(t)} \frac{e^{-NP_{1}(t)} - e^{-NP_{2}(t)}}{t_{m+1} \cdots t_{2m} - 1} d^{2m}t.
                \end{align}
       where
       \begin{align}
                    P_{1}(t) &= \sum_{k=1}^{m} \big(t_{k} - (\sigma_{k} - \sigma_{m} + \tau_{k}) \ln{t_{k}}\big) - \sum_{k=1}^{m-1} (t_{m+k} - \sigma_{k} \ln{t_{m+k}})\nonumber\\
           &\quad  - (t_{1} \cdots t_{m}) \abs{u}^{2} t_{2m} + \sigma_{m} \ln{t_{2m}},
               \end{align}
       \begin{equation}
                P_{2}(t) =P_{1}(t) +   \ln{(t_{m+1}\cdots t_{2m})},
       \end{equation}
       \begin{equation}
           Q(t) = \prod_{k=1}^{m} t_{k}^{b_{k} - b_{m} + 1} \prod_{k=1}^{m} t_{m+k}^{-b_{k}} e^{t_{2m} t_{1} \cdots t_{m} \frac{v\overline{v'}}{\rho^{2}}},
       \end{equation}
       \begin{equation}
           F(t) = -t_{2m} t_{1} \cdots t_{m} \frac{u\overline{v'} + v\overline{u}}{\rho}.
       \end{equation}
        Here    $\mathcal{C}_{k}$    is a path first going from   $-\infty$ to $r_{k}e^{i(-\pi+\theta_0)}$  ($0<\theta_0<\pi/2$) along the line parallel to the x-axis,  then going anticlockwise   along  the circle with radius of $r_k$  to $r_k e^{i(\pi-\theta_0)}$ and returning to   $-\infty$  along the line parallel to the x-axis. The radius $r_k$ will be chosen properly as required.
     \end{prop}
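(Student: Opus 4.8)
The plan is to follow the route of Proposition~\ref{repsumprop}, now feeding into the explicit finite sum \eqref{finitesumTr} \emph{two} integral representations of the Gamma function at once: an Euler integral $\Gamma(a)=\int_{0}^{\infty}s^{a-1}e^{-s}\,ds$ for each numerator factor $\Gamma((\sigma_{k}+\tau_{k})N+b_{k}+l+2)$ (legitimate since all the exponents $(\sigma_{k}+\tau_{k})N+b_{k}+l+1$ are $>-1$), and the Hankel-type contour integral \eqref{intrepgamma1} for each reciprocal factor $1/\Gamma(\sigma_{k}N+b_{k}+l+1)$. This introduces $2m$ variables: $t_{1},\dots,t_{m}\in(0,\infty)$ from the Euler integrals and $t_{m+1},\dots,t_{2m}$ on Hankel contours $\mathcal{C}_{1},\dots,\mathcal{C}_{m}$ from the reciprocal ones. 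Since the sum over $l$ is finite it may be brought inside the integral, where it becomes a geometric series
\[
  \sum_{l=0}^{N-1}\Big(\frac{z\overline{z'}\,t_{1}\cdots t_{m}}{t_{m+1}\cdots t_{2m}}\Big)^{l}=\frac{1-w^{N}}{1-w},\qquad w:=\frac{z\overline{z'}\,t_{1}\cdots t_{m}}{t_{m+1}\cdots t_{2m}},
\]
whose only singularity, at $w=1$, is removable. At this stage $T_{N}(z,z')$ is already a $2m$-fold integral, carrying the $l$-independent factors $\prod_{k}t_{k}^{(\sigma_{k}+\tau_{k})N+b_{k}+1}e^{-t_{k}}$ and $\prod_{k}t_{m+k}^{-(\sigma_{k}N+b_{k}+1)}e^{t_{m+k}}$.

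Next I would bring this to the stated normal form by a change of variables: rescale $t_{k}\mapsto Nt_{k}$ for $1\le k\le m$, rescale $t_{m+k}\mapsto Nt_{m+k}$ for $1\le k\le m-1$, and rescale the last variable by $t_{2m}\mapsto N\,z\overline{z'}\,(t_{1}\cdots t_{m})\,t_{2m}$. The last substitution is the crucial one: it sends $w\mapsto(t_{m+1}\cdots t_{2m})^{-1}$ and, upon inserting $z\overline{z'}=\abs{u}^{2}+\frac{u\overline{v'}+v\overline{u}}{\rho\sqrt{N}}+\frac{v\overline{v'}}{\rho^{2}N}$, converts $e^{t_{2m}}=e^{Nz\overline{z'}(t_{1}\cdots t_{m})t_{2m}}$ into $e^{N\abs{u}^{2}(t_{1}\cdots t_{m})t_{2m}}\,e^{-\sqrt{N}F(t)}\,e^{(v\overline{v'}/\rho^{2})(t_{1}\cdots t_{m})t_{2m}}$, which supplies the $t_{2m}$-term of $-NP_{1}$, the full factor $e^{-\sqrt{N}F}$, and the exponential in $Q$. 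Performing this in two steps — first the global dilation by $N$, then the holomorphic rescaling of $t_{2m}$ at fixed $t_{1},\dots,t_{m}$, whose image contour is deformed back onto $\mathcal{C}_{m}$ by Cauchy's theorem since $\mathrm{Re}(z\overline{z'})>0$ for large $N$ — gives a block-triangular Jacobian equal to $N^{2m}z\overline{z'}(t_{1}\cdots t_{m})$. Regrouping the exponentials $e^{-\sum t_{k}}$, $e^{\sum_{k<m}t_{m+k}}$, $e^{N\abs{u}^{2}(t_{1}\cdots t_{m})t_{2m}}$ together with the $N$-dependent powers $t_{k}^{N(\sigma_{k}-\sigma_{m}+\tau_{k})}$, $t_{m+k}^{-N\sigma_{k}}$, $t_{2m}^{-N\sigma_{m}}$ split off from the integral exponents, one recognises $e^{-NP_{1}(t)}$; the identity $P_{2}=P_{1}+\ln(t_{m+1}\cdots t_{2m})$ together with $\frac{1-w^{N}}{1-w}=\frac{(t_{m+1}\cdots t_{2m})\bigl(1-(t_{m+1}\cdots t_{2m})^{-N}\bigr)}{t_{m+1}\cdots t_{2m}-1}$ produces the numerator $e^{-NP_{1}}-e^{-NP_{2}}$ over the denominator $t_{m+1}\cdots t_{2m}-1$; and the residual powers of the $t_{j}$ — the gap between the integral exponents and the logarithmic coefficients of $P_{1}$, corrected by the factor $t_{1}\cdots t_{m}$ from the Jacobian and the factor $t_{m+1}\cdots t_{2m}$ from $1/(1-w)$ — assemble into $\prod_{k}t_{k}^{b_{k}-b_{m}+1}\prod_{k}t_{m+k}^{-b_{k}}$, i.e.\ the power part of $Q(t)$. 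The numerical, Gamma and $(z\overline{z'})$-powers collect into the prefactor displayed in the statement, and the radii $r_{k}$ of the circular arcs of the $\mathcal{C}_{k}$ remain free, to be pinned down at the relevant saddle in the ensuing lemmas; here they are required only to keep the integral absolutely convergent.

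I expect the main obstacle to be the bookkeeping of the second paragraph rather than anything conceptual: one must check that every power of $N$, of $z\overline{z'}$, and of each $t_{j}$ lands exactly where the claimed $P_{1},P_{2},F,Q$ put it, and that the Euler representation, the Hankel representation, the interchange of the finite sum with the integral, and the deformation of the rotated $t_{2m}$-contour are all valid for an admissible choice of the $r_{k}$. The one genuinely non-routine ingredient is the combined (non-product) change of variables in the last coordinate, which mixes $t_{2m}$ with $t_{1},\dots,t_{m}$; this is handled by the two-step decomposition and the block-triangular Jacobian above, after which the remaining work is the same Taylor expansion of $z\overline{z'}$ in powers of $N^{-1/2}$ already used in the Ginibre case of Proposition~\ref{repsumprop}.
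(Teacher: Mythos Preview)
Your proposal is correct and follows essentially the same route as the paper's own proof: combine the Euler integral for each numerator $\Gamma((\sigma_{k}+\tau_{k})N+b_{k}+l+2)$ with the Hankel representation \eqref{intrepgamma1} for each $1/\Gamma(\sigma_{k}N+b_{k}+l+1)$, sum the resulting geometric series in $l$, and then apply the change of variables $t_{k}\to Nt_{k}$ for $k=1,\dots,2m-1$ together with $t_{2m}\to N\,z\overline{z'}\,(t_{1}\cdots t_{m})\,t_{2m}$, expanding $z\overline{z'}$ in powers of $N^{-1/2}$. The paper's argument is terser about the Jacobian and the contour deformation, but your two-step decomposition and block-triangular Jacobian simply make explicit what the paper leaves to the reader.
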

     \begin{proof}
       Using   integral  representations of  reciprocal Gamma functions (\ref{intrepgamma1}) and Gamma functions, we find
       \begin{align*}
           T_{N}(z, z') &= \prod_{k=1}^{m} \frac{\Gamma(\sigma_{k}N + b_{k} + 1)}{\Gamma((\sigma_{k} + \tau_{k})N + b_{k} + 2)} \sum_{l=0}^{N-1} (z\overline{z'})^{l} \prod_{k=1}^{m} \int_{(0,\infty)} t_{k}^{(\sigma_{k} + \tau_{k}) N + b_{k} + l + 1} e^{-t_{k}} dt_{k} \\
           &\quad \times \prod_{k=1}^{m} \frac{1}{2\pi i} \int_{\mathcal{C}_{k}} t_{m+k}^{-(\sigma_{k}N + b_{k} + l + 1)} e^{t_{m+k}} dt_{m+k} \\
           &= (2\pi i)^{-m} \prod_{k=1}^{m} \frac{\Gamma(\sigma_{k}N + b_{k} + 1)}{\Gamma((\sigma_{k} + \tau_{k})N + b_{k} + 2)} \int_{(0,\infty)^{m} \times \mathcal{C}_{1} \times \cdots \times \mathcal{C}_{m}} \frac{1 - \paren{\frac{t_{1} \cdots t_{m} z\overline{z'}}{t_{m+1} \cdots t_{2m}}}^{N}}{1 - \frac{t_{1} \cdots t_{m} z\overline{z'}}{t_{m+1} \cdots t_{2m}}} \\
           &\quad \times \prod_{k=1}^{m} \big(t_{k}^{(\sigma_{k} + \tau_{k}) N + b_{k} + 1} t_{m+k}^{-(\sigma_{k}N + b_{k} + 1)} e^{-t_{k} + t_{m+k}}\big) d^{2m} t.
              \end{align*}
       Since
       \begin{equation*}
         z\overline{z'} = \abs{u}^{2} + \frac{u\overline{v'} + v\overline{u}}{\rho \sqrt{N}} + \frac{v\overline{v'}}{\rho^{2} N},
       \end{equation*}
       taking change of variables $t_{k} \to N t_{k}$, $k =1, \cdots, 2m-1$, and $t_{2m} \to N t_{2m} t_{1} \cdots t_{m} z\overline{z'}$ for the large $N$, we thus obtain the claimed result.
     \end{proof}

   \subsection{Several lemmas}
     Without loss of generality, we assume that
     \begin{equation}
       \sigma_{m} = \min\set{\sigma_{k}: k = 1, \cdots, m}.
     \end{equation}
     For  any  $0<\abs{u} < 1$, we know that there exists a unique root  in the interval $(\abs{u}, 1)$ for the algebraic equation
     \begin{multline}\label{algebraicEqTr1}
     x\prod_{k=1}^{m-1} \paren{(\tau_{k} + \sigma_{k} - \sigma_{m}) (x - \abs{u}^{2}) + \tau_{m} \abs{u}^{2}} \\
  \quad - \prod_{k=1}^{m-1} \paren{(\sigma_{k} - \sigma_{m}) (x - \abs{u}^{2}) + \tau_{m} \abs{u}^{2}} =0.
           \end{multline}
   Let $\xi(u)$ be such a root.

     Now the asymptotics for the weight function can be stated as follows.
     \begin{lem}\label{TrWeight}
       Let
       \begin{equation}\label{s-constantsTr}
         s_{k} = \paren{\frac{(\sigma_{k} - \sigma_{m}) (\xi(u) - \abs{u}^{2}) + \tau_{m} \abs{u}^{2}}{(\tau_{k} + \sigma_{k} - \sigma_{m}) (\xi(u) - \abs{u}^{2}) + \tau_{m} \abs{u}^{2}}}^{1/2}, \quad k = 1, \cdots, m-1.
       \end{equation}
       and introduce a rescaling variable
       \begin{equation}
         z = u + \frac{v}{\rho \sqrt{N}},
       \end{equation}
       where $\rho$ is a real nonzero parameter depending on $u$. Then for $0<\abs{u} < 1$ as $N \to \infty$ we have
             \begin{align*}
           w_{m}(z) &\sim  \Big(\frac{2\pi}{N}\Big)^{(m-1)/2} \prod_{k=1}^{m} \frac{\Gamma((\sigma_{k} + \tau_{k})N + b_{k} + 2)}{\Gamma(\sigma_{k} N + b_{k} + 1) \Gamma(\tau_{k}N + 1)}\, \abs{z}^{2(\sigma_{m}N + b_{m})}     \\
           &  \times   \frac{1}{\pi \sqrt{ \Big(1 + \frac{\tau_{m} \abs{u}^{2} \xi(u)}{(\xi(u) - \abs{u}^{2})^{2}}  \sum_{k=1}^{m-1} \frac{(1-s_{k}^{2})^{2}}{\tau_{k} s_{k}^{2} }\Big)\prod_{k=1}^{m-1} \frac{\tau_{k}}{(1-s_{k}^{2})^{2}}}} \prod_{k=1}^{m-1} s_{k}^{2(b_{k} - b_{m})-1}\\
           & \times \mathrm{exp}\Big\{N\tau_{m} \ln\Big(1-\frac{1}{\xi(u) - \abs{u}^{2}} \big(\frac{u\overline{v} + v\overline{u}}{\rho \sqrt{N}} + \frac{\abs{v}^{2}}{\rho^{2} N}\big)\Big)+N\tau_{m} \ln{\big(1-\frac{\abs{u}^{2}}{\xi(u)}\big)}\Big\} \\
           &  \times \mathrm{exp}\Big\{N\sum_{k=1}^{m-1} \big(2(\sigma_{k} - \sigma_{m})\ln{s_{k}} + \tau_{k} \ln(1-s_{k}^{2})\big)\Big\} \\
           &  \times \mathrm{exp}\Big\{\frac{1}{2}\frac{\tau_{m}^{2} \xi^{2}(u)}{(\xi(u) - \abs{u}^{2})^{4}} \frac{(u\overline{v}+v\overline{u})^{2}}{\rho^{2}} \frac{ \sum_{k=1}^{m-1} \frac{(1-s_{k}^{2})^{2}}{\tau_{k} s_{k}^{2} }}{1 + \frac{\tau_{m} \abs{u}^{2} \xi(u)}{(\xi(u) - \abs{u}^{2})^{2}} \sum_{k=1}^{m-1} \frac{(1-s_{k}^{2})^{2}}{\tau_{k} s_{k}^{2}}}\Big\}.
         \end{align*}

     \end{lem}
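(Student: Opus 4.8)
The plan is to repeat, for the integral \eqref{weightTr}, the Laplace‑type argument used in the proof of Lemma~\ref{vWeight}. First I would insert the rescaling $z=u+v/(\rho\sqrt{N})$, so that $\abs{z}^{2}=\abs{u}^{2}+\eps_{N}$ with $\eps_{N}:=(u\overline{v}+v\overline{u})/(\rho\sqrt{N})+\abs{v}^{2}/(\rho^{2}N)$, and isolate the $z$‑dependence of the phase \eqref{pforWeightTr}, which sits entirely in the term $\tau_{m}N\ln\big(1-\abs{z}^{2}/\prod_{k=1}^{m-1}r_{k}^{2}\big)$. Writing $X=\prod_{k=1}^{m-1}r_{k}^{2}$ and splitting
\[
\ln\Big(1-\frac{\abs{z}^{2}}{X}\Big)=\ln\Big(1-\frac{\abs{u}^{2}}{X}\Big)+\ln\Big(1-\frac{\eps_{N}}{X-\abs{u}^{2}}\Big),
\]
the first summand produces an $N$‑independent phase
\[
p(r)=-\sum_{k=1}^{m-1}\big(2(\sigma_{k}-\sigma_{m})\ln r_{k}+\tau_{k}\ln(1-r_{k}^{2})\big)-\tau_{m}\ln\Big(1-\frac{\abs{u}^{2}}{X}\Big)
\]
on the region $R=\{r\in(0,1)^{m-1}:X>\abs{u}^{2}\}$, which is nonempty precisely because $0<\abs{u}<1$; since $\eps_{N}=O(N^{-1/2})$, the second summand is $O(N^{-1/2})$ uniformly near the relevant saddle, and multiplied by $N$ it yields a $\sqrt{N}$‑order term $-\sqrt{N}f(r)$ with $f(r)=-\tau_{m}(u\overline{v}+v\overline{u})/\big(\rho(X-\abs{u}^{2})\big)$ plus a bounded correction, which is exactly the situation covered by Remark~\ref{remark2.4}. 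Up to the Gamma‑function prefactor in \eqref{weightTr} and the retained factor $\abs{z}^{2(\sigma_{m}N+b_{m})}$, the weight function thus becomes $\int_{R}e^{-Np(r)-\sqrt{N}f(r)}q(r)\,d^{m-1}r$ with $q(r)=\prod_{k=1}^{m-1}r_{k}^{2(b_{k}-b_{m})-1}$.

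Next I would locate the saddle. A short computation shows that $\partial p(r)/\partial r_{k}=0$ is equivalent to $\tau_{k}r_{k}^{2}/(1-r_{k}^{2})=(\sigma_{k}-\sigma_{m})+\tau_{m}\abs{u}^{2}/(X-\abs{u}^{2})$ for every $k$; solving for $r_{k}^{2}$ and imposing the consistency relation $X=\prod_{k}r_{k}^{2}$ recovers exactly the algebraic equation \eqref{algebraicEqTr1} for $\xi(u)$, and shows that the unique critical point of $p$ in $R$ is $s=(s_{1},\dots,s_{m-1})$ with $s_{k}$ as in \eqref{s-constantsTr}, with the key identity $\prod_{k=1}^{m-1}s_{k}^{2}=\xi(u)$; in particular $X-\abs{u}^{2}=\xi(u)-\abs{u}^{2}>0$ at $s$ and each $s_{k}\in(0,1)$. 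Evaluating gives
\[
p(s)=-\tau_{m}\ln\Big(1-\frac{\abs{u}^{2}}{\xi(u)}\Big)-\sum_{k=1}^{m-1}\big(2(\sigma_{k}-\sigma_{m})\ln s_{k}+\tau_{k}\ln(1-s_{k}^{2})\big),
\]
which matches the $\exp\{-Np(s)\}$ factors in the claim, while $f(s)$ and $q(s)$ are read off directly, and evaluating the split‑off logarithm at $X=\xi(u)$ produces the factor $\exp\{N\tau_{m}\ln(1-\tfrac{1}{\xi(u)-\abs{u}^{2}}(\tfrac{u\overline{v}+v\overline{u}}{\rho\sqrt{N}}+\tfrac{\abs{v}^{2}}{\rho^{2}N}))\}$ appearing in the statement.

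Then I would compute the Hessian $A=\big(\partial^{2}p(s)/\partial r_{k}\partial r_{j}\big)_{1\le k,j\le m-1}$; one finds it is a positive diagonal matrix plus a positive rank‑one term, so the formulas of Appendix~\ref{appendixa} give both its determinant, which is precisely the square‑root normalization displayed in the statement, and its inverse, which together with $\eta=\nabla f(s)$ yields the Gaussian factor $\exp\{\tfrac12\eta A^{-1}\eta^{T}\}$ and the coefficient in front of $(u\overline{v}+v\overline{u})^{2}/\rho^{2}$. Positive definiteness of $A$ also shows $s$ is a nondegenerate minimum of $r\mapsto\Re{p(r)}$ on the real slice. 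It remains to check the decay condition, i.e.\ the analogue of Condition~\ref{condition3} of Theorem~\ref{laplaceMethod1}: the real part of $p(r)+f(r)/\sqrt{N}$ exceeds its value at $s$ by a fixed positive amount on $R$ outside any given neighbourhood of $s$, once $N$ is large. Here the essential observation is that $p(r)\to+\infty$ as $r$ approaches the boundary of $R$ — where some $r_{k}\to0$, or $1-r_{k}^{2}\to0^{+}$, or $X\to\abs{u}^{2}$ from above — because all the coefficients $\sigma_{k}-\sigma_{m}\ge0$ and $\tau_{k},\tau_{m}>0$ enter with the correct sign; hence there is no boundary contribution and the tail estimate goes through verbatim as in Lemma~\ref{vWeight}. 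Finally, Laplace's approximation \cite[p.495]{wong2001asymptotic}, in the form allowing the $\sqrt{N}$ term (cf.\ Remark~\ref{remark2.4}), gives $w_{m}(z)$ as the Gamma prefactor times $\abs{z}^{2(\sigma_{m}N+b_{m})}(2\pi/N)^{(m-1)/2}(\det A)^{-1/2}q(s)\,e^{-Np(s)-\sqrt{N}f(s)}e^{\frac12\eta A^{-1}\eta^{T}}(1+o(1))$, which after reassembling the pieces above is exactly the asserted asymptotics; uniformity in $v$ over compact sets is immediate since $s$, $A$ and all error bounds are independent of $v$. The main obstacle is this saddle‑point analysis — proving that the root $\xi(u)\in(\abs{u},1)$ of \eqref{algebraicEqTr1} does give the unique interior critical point $s$ of $p$ in $R$ (equivalently $s_{k}\in(0,1)$, $\prod_{k}s_{k}^{2}\in(\abs{u}^{2},1)$) and that the Hessian there is positive definite; everything downstream is the same bookkeeping as in the parameter‑varying Ginibre case.
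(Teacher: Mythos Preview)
Your proposal is correct and follows essentially the same approach as the paper's own proof: split the phase into an $N$-independent part $p(r)$ and an $N$-dependent correction $f_{N}(r)$ of order $\sqrt{N}$, identify the unique interior saddle $s$ via the algebraic equation \eqref{algebraicEqTr1}, compute the Hessian (a diagonal plus rank-one matrix handled by Appendix~\ref{appendixa}), check the decay away from $s$, and apply Laplace's approximation in the form allowed by Remark~\ref{remark2.4}. Your write-up is in fact slightly more explicit than the paper's on a couple of points---the derivation of the saddle equations leading to \eqref{algebraicEqTr1}, the identity $\prod_{k}s_{k}^{2}=\xi(u)$, and the boundary blow-up of $p$ ensuring the tail estimate---but the argument is the same.
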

     \begin{proof}
       For $0<\abs{u} < 1$, if the variable $z$ in  the function $p(r)$ of (\ref{pforWeightTr})  is replaced by $u$,  we can verify   that $s = (s_{1}, \cdots, s_{m-1})$ is its unique saddle point and is also a minimum point.  Thus it suffices for us to analyze the behavior near the neighborhood of $s$. Noting \eqref{algebraicEqTr1} and \eqref{s-constantsTr}, we always choose a small   neighborhood of $s$ such that
       \begin{equation*}
         1 - \frac{\abs{u}^{2}}{r_{1}^{2} \cdots r_{m-1}^{2}} > 0
       \end{equation*}
       Furthermore, by setting $z = u + \frac{v}{\rho \sqrt{N}}$
       one gets  for $N$  large enough
       \begin{equation}
         \begin{split}
           & \prod_{k=1}^{m} \frac{\Gamma(\sigma_{k} N + b_{k} + 1) \Gamma(\tau_{k}N + 1)}{\Gamma((\sigma_{k} + \tau_{k})N + b_{k} + 2)}\  w_{m}(z)\\
           &= \frac{2^{m-1} \abs{z}^{2(\sigma_{m}N + b_{m})}}{\pi} \int_{[0,1]^{m-1}} q(r) e^{-\sqrt{N} f_{N}(r)}e^{-Np(r)} d^{m-1}r,
         \end{split}
       \end{equation}
       where (for simplicity we use the same notation $ p(r)$ as that in \eqref{pforWeightTr})
       \begin{equation}
         p(r) = - \sum_{k=1}^{m-1} \big(2(\sigma_{k} - \sigma_{m})\ln{r_{k}} + \tau_{k} \ln(1-r_{k}^{2} )_{+}\big) - \tau_{m} \ln{\big(1-\frac{\abs{u}^{2}}{\prod_{k=1}^{m-1} r_{k}^{2}}\big)_{+}} ,
       \end{equation}
       \begin{equation}
         q(r) = \prod_{k=1}^{m-1} r_{k}^{2(b_{k}-b_m)-1},
       \end{equation}
       and
       \begin{equation}
         f_{N}(r) = -\sqrt{N} \tau_{m} \ln\Big(1-\frac{1}{r_{1}^{2} \cdots r_{m-1}^{2} - \abs{u}^{2}} \big(\frac{u\overline{v} + v\overline{u}}{\rho \sqrt{N}} + \frac{\abs{v}^{2}}{\rho^{2} N}\big)\Big).
       \end{equation}

Next, we do some explicit calculations as follows.  First, the Hessian matrix of $p(t)$ at $s$  reads off
       \begin{equation}
         A = \paren{\frac{\partial^{2} p(s)}{\partial r_{k} \partial r_{j}}}_{1\leq j,k\leq m-1},
       \end{equation}
       where
       \begin{equation}
         \frac{\partial^2 p(s)}{\partial r_{k} \partial r_{j}} = \begin{cases}
           4 \tau_{m} \frac{1}{s_{k} s_{j}} \frac{\abs{u}^{2} \xi(u)}{(\xi(u) - \abs{u}^{2})^{2}}, & k \neq j;\\
           4 \tau_{k} \frac{1}{(1-s_{k}^{2})^{2}} + 4 \tau_{m} \frac{1}{s_{k}^{2}} \frac{\abs{u}^{2} \xi(u)}{(\xi(u) - \abs{u}^{2})^{2}}, & k =j.
         \end{cases}
       \end{equation}
       It is a positive definite matrix and   
      the inverse is equal to $A^{-1} = (A_{kj}^{*})$  where for $k\neq j$
      \begin{equation}
         A_{kj}^{*}  =
           -  \frac{\frac{(1-s_{k}^{2})^{2}}{ \tau_{k}s_{k}} \frac{(1-s_{j}^{2})^{2}}{ \tau_{j}s_{j} } \frac{\tau_{m} \abs{u}^{2} \xi(u)}{(\xi(u) - \abs{u}^{2})^{2}}}{1 + \frac{\tau_{m} \abs{u}^{2} \xi(u)}{(\xi(u) - \abs{u}^{2})^{2}} \sum_{l = 1}^{m-1} \frac{(1-s_{l}^{2})^{2}}{\tau_{l}s_{l}^{2}}}\end{equation}
                     and for $k=j$ \begin{equation}
           A_{kk}^{*}= \frac{(1-s_{k}^{2})^{2}}{\tau_{k}} \frac{1 + \frac{\tau_{m} \abs{u}^{2} \xi(u)}{(\xi(u) - \abs{u}^{2})^{2}} \sum_{l \neq k} \frac{(1-s_{l}^{2})^{2}}{\tau_{l}s_{l}^{2} }}{1 + \frac{\tau_{m} \abs{u}^{2} \xi(u)}{(\xi(u) - \abs{u}^{2})^{2}} \sum_{l = 1}^{m-1} \frac{(1-s_{l}^{2})^{2}}{\tau_{l}s_{l}^{2} }}.
                \end{equation}

Secondly, the determinant of the Hessian at $s$ is equal to
       \begin{equation}
         \Det{A} = 4^{m-1}  \Big(1 + \frac{\tau_{m} \abs{u}^{2} \xi(u)}{(\xi(u) - \abs{u}^{2})^{2}} \sum_{k=1}^{m-1} \frac{(1-s_{k}^{2})^{2}}{\tau_{k}s_{k}^{2} }\Big) \prod_{k=1}^{m-1} \frac{\tau_{k}}{(1-s_{k}^{2})^{2}}
       \end{equation}
and        \begin{equation}
         \eta := \lim_{N \to \infty} \nabla f_{N}(s) = -2 \frac{\tau_{m} \xi(u)}{(\xi(u) - \abs{u}^{2})^{2}} \frac{u\overline{v} + v\overline{u}}{\rho} \big(\frac{1}{s_{1}}, \cdots, \frac{1}{s_{m-1}}\big).
       \end{equation}

       We see that $p(r) + \frac{1}{\sqrt{N}}f_{N}(r)$ has a positive lower bound  on the  region of integration except for a neighbour of $s$ for sufficiently  large $N$. As in the proof of Theorem \eqref{laplaceMethod1} (see Condition \ref{condition3}  therein and Remark \ref{remark2.4}), the integral except for the part in that neighbour of $s$ is exponentially decaying. Furthermore, Laplace's approximation \cite[p.495]{wong2001asymptotic} can be used to give
       \begin{equation}
         \begin{split}
           w_{m}(z) &\sim \frac{2^{m-1} \abs{z}^{2(\sigma_{m}N + b_{m})}}{\pi} \prod_{k=1}^{m} \frac{\Gamma((\sigma_{k} + \tau_{k})N + b_{k} + 2)}{\Gamma(\sigma_{k} N + b_{k} + 1) \Gamma(\tau_{k}N + 1)}\\
           &\quad \times \Big(\frac{2\pi}{N}\Big)^{(m-1)/2} \big(\Det{A}\big)^{-1/2} q(s) e^{-Np(s) - \sqrt{N} f_{N}(s)} e^{\frac{1}{2} \eta A^{-1} \eta^{T}}
         \end{split}
       \end{equation}
       from which the desired result follows.
     \end{proof}

    We now  turn to consider the asymptotics of the truncated series given in (\ref{finitesumTr}). For $u\neq 0$, let $\zeta(u)$ be the unique positive solution of the algebraic equation
     \begin{equation}\label{algebraicEqTr2}
       \prod_{k=1}^{m} \frac{\sigma_{k} - \sigma_{m} + x\abs{u}^{2}}{\sigma_{k} - \sigma_{m} + \tau_{k} + x\abs{u}^{2}} - \abs{u}^{2} = 0.
     \end{equation}
     For  $1\leq k\leq m$ and $1\leq j\leq m-1$ set
     \begin{equation}\label{tildes-constantsTr}
      \tilde{s}_{k}  = \sigma_{k} - \sigma_{m} + \tau_{k} + \zeta(u)\abs{u}^{2},\  \tilde{s}_{m+j}  = \sigma_{j} - \sigma_{m} + \zeta(u)\abs{u}^{2},\ \tilde{s}_{2m}=1.\end{equation}
     Then we have the following lemma. Its proof is just the same as that of Lemma \ref{twopartslemmavarying} and thus may be omitted.
     \begin{lem}\label{twopartslemmavaryingTr}
       Suppose that $u$ is a fixed nonzero complex number and  $\tilde{s}_{k}$ is defined as in \eqref{tildes-constantsTr}.  Let   $\rho=\rho(u)\neq 0$ and introduce rescaling variables
       \begin{equation}
         z = u + \frac{v}{\rho \sqrt{N}}, \qquad z'= u + \frac{v'}{\rho \sqrt{N}}
       \end{equation}
       where $v$ and $v'$ lie in a compact set of $\Real{C}$, then for sufficiently large $N$ there exists $c_0>0$ such that
       \begin{align}
         T_{N}&(z, z')   = N^{2m + \sum_{k=1}^{m} \tau_{k}} (z\overline{z'})^{-(\sigma_{m}N + b_{m})} (2\pi i)^{-m} \prod_{k=1}^{m} \frac{\Gamma(\sigma_{k}N + b_{k} + 1)}{\Gamma(\sigma_{k} N + \tau_{k} N + b_{k} + 2)} \nonumber \\ & \quad  \times \Big(e^{-Np_{1}(\tilde{s})-c_0 N}  e^{-\sqrt{N} f(\tilde{s})}O(1) \nonumber\\
         &\hspace{2cm}+\int_{\gamma_{1} \times \cdots \times \gamma_{2m}}q(t) e^{-\sqrt{N} f(t)} \frac{e^{-Np_{1}(t)} - e^{-Np_{2}(t)}}{t_{2m}-1}\, d^{2m}t\Big) \label{decayandleadingvaryingTr}
       \end{align}
           where
           \begin{equation}\label{primativep1Tr}
             \begin{split}
               p_{1}(t) &= \sum_{k=1}^{m} (t_{k} - (\sigma_{k} - \sigma_{m} + \tau_{k}) \ln{t_{k}}) - \frac{t_{1} \cdots t_{m}}{t_{m+1} \cdots t_{2m-1}} \abs{u}^{2} t_{2m} \\
               &\quad - \sum_{k=1}^{m-1} (t_{m+k} - (\sigma_{k} - \sigma_{m}) \ln{t_{m+k}}) + \sigma_{m} \ln{t_{2m}},
             \end{split}
           \end{equation}
           \begin{equation}\label{primativep2Tr}
               p_{2}(t)  =  p_1(t) +   \ln{t_{2m}},
           \end{equation}
           \begin{equation}\label{primativeqTr}
             q(t) = \big(\prod_{k=1}^{m} t_{k}^{b_{k} - b_{m} + 1}\big) \big(\prod_{k=1}^{m-1} t_{m+k}^{-(b_{k} - b_{m} + 1)}\big) \, t_{2m}^{-b_{m}} e^{\frac{t_{1} \cdots t_{m}}{t_{m+1} \cdots t_{2m-1}} \frac{v\overline{v'}}{\rho^{2}} t_{2m}},
           \end{equation}
           and
           \begin{equation}\label{primativefTr}
             f(t) = -\frac{t_{1} \cdots t_{m}}{t_{m+1} \cdots t_{2m-1}} \frac{u\overline{v'} + v\overline{u}}{\rho} t_{2m}.
           \end{equation}
            Here    $\gamma_{k}=\{t_k: -\varepsilon_0 < t_k- \tilde{s}_{k} <\varepsilon_{0}\}$ and $\gamma_{m+k}=\{t_{m+k}:\abs{t_{m+k}}= \tilde{s}_{m+k}, |t_{m+k}- \tilde{s}_{m+k}|<\varepsilon_{0}\}$  for some $\varepsilon_{0}>0$ ($k=1,\ldots,m$). Moreover, both $\Re{p_{1}(t)}$ and $\Re{p_{2}(t)}$ attain their unique minimum over $\gamma_{1} \times \cdots \times \gamma_{2m}$ at the point $ \tilde{s}  = (\tilde{s}_{1}, \cdots, \tilde{s}_{m-1},\tilde{s}_{2m}).  $
     \end{lem}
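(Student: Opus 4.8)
The plan is to mirror, essentially line for line, the proof of Lemma~\ref{twopartslemmavarying}; the only structural novelty is the extra block of $m$ genuine (as opposed to Hankel-type) variables produced by the Gamma-function representations. First I would start from the integral representation of $T_{N}(z,z')$ obtained in Section~\ref{repsumpropTr}, which writes $T_{N}$ as a $2m$-fold integral over $D:=(0,\infty)^{m}\times\mathcal{C}_{1}\times\cdots\times\mathcal{C}_{m}$ with phases $P_{1},P_{2}$, the $\sqrt{N}$-correction $F$ and the amplitude $Q$ displayed there, the radii of the Hankel contours being still at our disposal. I would fix them by $r_{k}=\tilde{s}_{m+k}$ for $k=1,\ldots,m-1$ and $r_{m}=1/(\tilde{s}_{m+1}\cdots\tilde{s}_{2m-1})$, with $\tilde{s}_{\bullet}$ as in \eqref{tildes-constantsTr}, so that the point $t_{0}=(\tilde{s}_{1},\ldots,\tilde{s}_{m},\tilde{s}_{m+1},\ldots,\tilde{s}_{2m-1},1/(\tilde{s}_{m+1}\cdots\tilde{s}_{2m-1}))$ lies on the integration set; a short computation using the defining equation \eqref{algebraicEqTr2} of $\zeta(u)$ then shows $\nabla P_{1}(t_{0})=\nabla P_{2}(t_{0})=0$.

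Next I would establish the global extremum property: $\Re{P_{1}(t)}>\Re{P_{1}(t_{0})}$ and $\Re{P_{2}(t)}>\Re{P_{2}(t_{0})}$ for every $t\in D$ with $t\neq t_{0}$. On the real-positive block $t_{1},\ldots,t_{m}$ each term $t_{k}-(\sigma_{k}-\sigma_{m}+\tau_{k})\ln t_{k}$ of $P_{1}$ blows up at both ends $0$ and $+\infty$, forcing an interior minimum; along each Hankel contour $\mathcal{C}_{k}$ the real part of the corresponding term decays on the two horizontal tails running to $-\infty$ and, restricted to the circular arc of radius $r_{k}$, is minimized at argument zero, exactly as in the single- and varying-Ginibre analyses; and the coupling term $-(t_{1}\cdots t_{m})\abs{u}^{2}t_{2m}+\sigma_{m}\ln t_{2m}$, together with the other terms, produces the $2m$ stationarity equations $\nabla P_{1}=0$, which reduce after elimination to the single equation \eqref{algebraicEqTr2} for $\zeta(u)$ whose unique positive root pins the minimum down to $t_{0}$; the same holds for $P_{2}$. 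This verification, with its $2m$ coupled variables and mixed real/contour directions, is the step that carries all of the bookkeeping and I expect it to be the main obstacle.

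Then I would carry out the exponential-decay estimate. Fix $\varepsilon_{0}>0$ and let $D_{\varepsilon_{0}}$ be as in the statement. By uniform continuity in $v,v'$ and the strict inequalities just established, there is $c_{0}>0$ such that $\Re{\big(P_{j}(t)-P_{j}(t_{0})+(F(t)-F(t_{0}))/\sqrt{N}\big)}\ge 2c_{0}$ for $j=1,2$, for all large $N$ and all $t\in D\setminus D_{\varepsilon_{0}}$. Writing the integrand as $Q(t)e^{-\sqrt{N}F(t)}e^{-NP_{1}(t)}\big(1-(t_{m+1}\cdots t_{2m})^{-N}\big)/(t_{m+1}\cdots t_{2m}-1)$, using $\abs{t_{m+1}\cdots t_{2m}}\ge 1$ on $D$ (which holds because our choice of radii makes the product of radii equal to $1$) to bound the last factor by $N$, and invoking absolute convergence of the surviving integral in the $(0,\infty)^{m}$ directions (where $t_{k}^{(\sigma_{k}+\tau_{k})N+\cdots}e^{-t_{k}}$ is integrable) and along the Hankel tails, one gets that the part of the integral over $D\setminus D_{\varepsilon_{0}}$ equals $e^{-Np_{1}(\tilde{s})-c_{0}N}e^{-\sqrt{N}f(\tilde{s})}O(1)$, since $P_{1}(t_{0})=p_{1}(\tilde{s})$ and $F(t_{0})=f(\tilde{s})$. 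On $D_{\varepsilon_{0}}$ I would apply the change of variables $(t_{m+1},\ldots,t_{2m-1},t_{2m})\mapsto(t_{m+1},\ldots,t_{2m-1},t_{2m}/(t_{m+1}\cdots t_{2m-1}))$, which turns $P_{1},P_{2},F,Q$ into $p_{1},p_{2},f,q$ of \eqref{primativep1Tr}--\eqref{primativefTr}, maps $D_{\varepsilon_{0}}$ onto $\gamma_{1}\times\cdots\times\gamma_{2m}$, and transports the minimum property from $t_{0}$ to $\tilde{s}$; assembling the two pieces yields \eqref{decayandleadingvaryingTr}.

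Apart from the extremum verification above, nothing genuinely new enters: the whole argument is a longer but faithful replica of the proofs of Lemma~\ref{twopartslemma} and Lemma~\ref{twopartslemmavarying}, the extra $m$ Gamma-type variables merely contributing integrable real integrations that do not interact with the $1/(t_{2m}-1)$ singularity.
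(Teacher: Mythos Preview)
Your plan is essentially the paper's own: the paper simply says the proof is the same as that of Lemma~\ref{twopartslemmavarying}, and you reproduce that scheme faithfully---fix radii so $t_{0}$ lies on $D$, verify the strict-minimum property of $\Re{P_{j}}$ at $t_{0}$, peel off the exponentially small piece over $D\setminus D_{\varepsilon_{0}}$ using $|t_{m+1}\cdots t_{2m}|\ge 1$, then change variables $t_{2m}\mapsto t_{2m}/(t_{m+1}\cdots t_{2m-1})$ on the localized integral. The identification $P_{1}(t_{0})=p_{1}(\tilde{s})$, $F(t_{0})=f(\tilde{s})$ and the transport of the minimum to $\tilde{s}$ are exactly as in Lemma~\ref{twopartslemmavarying}.

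One inaccuracy: the claim ``$\nabla P_{1}(t_{0})=\nabla P_{2}(t_{0})=0$'' is false in general. A direct computation gives $\partial P_{1}/\partial t_{2m}\big|_{t_{0}}=\tilde{s}_{m+1}\cdots\tilde{s}_{2m-1}\,(\sigma_{m}-\zeta(u)|u|^{2})$ and $\partial P_{2}/\partial t_{2m}\big|_{t_{0}}=\tilde{s}_{m+1}\cdots\tilde{s}_{2m-1}\,(1+\sigma_{m}-\zeta(u)|u|^{2})$, neither of which vanishes away from the respective edge radii; this is precisely why Theorem~\ref{laplaceMethod1} (rather than Theorem~\ref{steepestDecentonEdge}) applies in the bulk in Lemma~\ref{TrSumP1}. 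What you actually need---and what you go on to argue---is only that $\Re{P_{j}}$ attains its unique minimum over the chosen contours at $t_{0}$; the first $2m-1$ partial derivatives vanish there, and along the last contour the real part is minimized at the real point of the circle. Drop or correct the full-gradient assertion and the rest stands.
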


     \begin{lem}\label{TrSumP1}
       With the same notation and assumptions as in Lemma \ref{twopartslemmavaryingTr}, let
       \begin{equation}
         \Upsilon = \mathrm{P.V.} \int_{\gamma_{1} \times \cdots \times \gamma_{2m}} q(t) e^{-\sqrt{N}f(t)} e^{-Np_{1}(t)} \frac{1}{t_{2m} - 1} d^{2m}t,
       \end{equation}
       then the following hold true uniformly for $v, v'$ in any compact subset of $\mathbb{C}$.
       \begin{enumerate}[label = $\mathrm{(\Roman*)}$]
         \item\label{TrStatement1} For $\abs{u} \neq \sqrt{\frac{\sigma_{1} \cdots \sigma_{m}}{(\sigma_{1} + \tau_{1}) \cdots (\sigma_{m} + \tau_{m})}}$,
             \begin{align*} \Upsilon &\sim \frac{- i^{m} \pi  (2\pi/N)^{(2m-1)/2}\, \mathrm{sign}(\sigma_{m} - \zeta(u)\abs{u}^{2})}{\pi \sqrt{ \big(1 - \zeta(u) \abs{u}^{2} (\sum_{k=1}^{m} \frac{1}{\tilde{s}_{k}} - \sum_{l=1}^{m-1} \frac{1}{\tilde{s}_{m+l}})\big)\prod_{k=1}^{2m-1} \frac{1}{\tilde{s}_{k}}}}\\
                 &\quad \times \mathrm{exp}\Big\{-N\sum_{k=1}^{m} \big(\tilde{s}_{k} - (\sigma_{k} - \sigma_{m} + \tau_{k}) \ln{\tilde{s}_{k}}\big) + N\zeta(u) \abs{u}^{2} \Big\}\\
                 &\quad \times \mathrm{exp}\Big\{N\sum_{k=1}^{m-1} \big(\tilde{s}_{m+k} - (\sigma_{k} - \sigma_{m}) \ln{\tilde{s}_{m+k}}) - N\sigma_{m}\Big\}\\
                 &\quad \times \mathrm{exp}\Big\{\sqrt{N} \zeta(u) \frac{u\overline{v'} + v\overline{u}}{\rho}\Big\} \ \prod_{k=1}^{m} \tilde{s}_{k}^{b_{k} - b_{m} + 1} \prod_{l=1}^{m-1} \tilde{s}_{m+l}^{-(b_{l} - b_{m} + 1)} \, e^{\zeta(u) \frac{v\overline{v'}}{\rho^{2}}} \\
                 &\quad \times \mathrm{exp}\Big\{\frac{1}{2} \zeta^{2}(u) \frac{(u\overline{v'} + v\overline{u})^{2}}{\rho^{2}} \frac{\sum_{k=1}^{m} \frac{1}{\tilde{s}_{k}} - \sum_{l=1}^{m-1} \frac{1}{\tilde{s}_{m+l}}}{1-\zeta(u)\abs{u}^{2}(\sum_{k=1}^{m} \frac{1}{\tilde{s}_{k}} - \sum_{l=1}^{m-1} \frac{1}{\tilde{s}_{m+l}})}\Big\}.
              \end{align*}
         \item\label{TrStatement2} For $\abs{u} = \sqrt{\frac{\sigma_{1} \cdots \sigma_{m}}{(\sigma_{1} + \tau_{1}) \cdots (\sigma_{m} + \tau_{m})}} \neq 0,$
             \begin{equation*}
               \begin{split}
                 \Upsilon &\sim \frac{i^{m} \pi  (2\pi/N)^{(2m-1)/2}}{\pi \sqrt{\big(1 - \zeta(u) \abs{u}^{2} (\sum_{k=1}^{m} \frac{1}{\tilde{s}_{k}} - \sum_{l=1}^{m-1} \frac{1}{\tilde{s}_{m+l}})\big)\prod_{k=1}^{2m-1} \frac{1}{\tilde{s}_{k}} }}\\
                 &\quad \times \mathrm{exp}\Big\{-N\sum_{k=1}^{m} \big(\tilde{s}_{k} - (\sigma_{k} - \sigma_{m} + \tau_{k}) \ln{\tilde{s}_{k}}\big) + N\zeta(u) \abs{u}^{2} \Big\}\\
                 &\quad \times \mathrm{exp}\Big\{N\sum_{k=1}^{m-1} \big(\tilde{s}_{m+k} - (\sigma_{k} - \sigma_{m}) \ln{\tilde{s}_{m+k}}) - N\sigma_{m}\Big\}\\
                 &\quad \times \mathrm{exp}\Big\{\sqrt{N} \zeta(u) \frac{u\overline{v'} + v\overline{u}}{\rho}\Big\} \prod_{k=1}^{m} \tilde{s}_{k}^{b_{k} - b_{m} + 1} \prod_{l=1}^{m-1} \tilde{s}_{m+l}^{-(b_{l} - b_{m} + 1)} e^{\zeta(u) \frac{v\overline{v'}}{\rho^{2}}} \\
                 &\quad \times \mathrm{exp}\Big\{\frac{1}{2} \zeta^{2}(u) \frac{(u\overline{v'} + v\overline{u})^{2}}{\rho^{2}} \frac{\sum_{k=1}^{m} \frac{1}{\tilde{s}_{k}} - \sum_{l=1}^{m-1} \frac{1}{\tilde{s}_{m+l}}}{1-\zeta(u)\abs{u}^{2}(\sum_{k=1}^{m} \frac{1}{\tilde{s}_{k}} - \sum_{l=1}^{m-1} \frac{1}{\tilde{s}_{m+l}})}\Big\}\\
                 &\quad \times \mathrm{erf}\Big(\frac{\sqrt{\zeta(u)}}{\sqrt{1-\zeta(u)\abs{u}^{2}(\sum_{k=1}^{m} \frac{1}{\tilde{s}_{k}} - \sum_{l=1}^{m-1} \frac{1}{\tilde{s}_{m+l}})}} \frac{u\overline{v'} + v\overline{u}}{\rho \sqrt{2} \abs{u}}\Big).
               \end{split}
             \end{equation*}
       \end{enumerate}
     \end{lem}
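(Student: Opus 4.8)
The plan is to reduce Lemma~\ref{TrSumP1} to Theorems~\ref{laplaceMethod1} and~\ref{steepestDecentonEdge} in exactly the way Lemma~\ref{vSumP1} was handled in the Ginibre case, the only difference being that there are now $2m$ integration variables: the variable $t_{2m}$, which runs on the arc through $1$, carries the singular factor $1/(t_{2m}-1)$ and is absorbed by the principal‑value part of those theorems, while the remaining $2m-1$ variables are treated by Laplace/steepest‑descent around the point $\tilde s=(\tilde s_1,\ldots,\tilde s_{2m})$ of \eqref{tildes-constantsTr}. By Lemma~\ref{twopartslemmavaryingTr}, $\Re p_1$ has its unique minimum over $\gamma_1\times\cdots\times\gamma_{2m}$ at $\tilde s$; this is precisely Conditions~(iii) and~(iv) of the relevant theorem, it shows the integral over the complement of any neighbourhood of $\tilde s$ contributes an exponentially smaller term, and it disposes of all the analytic hypotheses, so the work reduces to the local expansion.

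First I would extract the first‑order data. Writing $g(t)=\dfrac{t_1\cdots t_m}{t_{m+1}\cdots t_{2m-1}}$ and differentiating \eqref{primativep1Tr}, one finds that the stationarity conditions $\partial p_1(\tilde s)/\partial t_k=0$ for $k=1,\ldots,2m-1$ reduce, after substituting \eqref{tildes-constantsTr}, to the single identity $g(\tilde s)=\zeta(u)$, which is exactly the defining equation \eqref{algebraicEqTr2} for $\zeta(u)$; meanwhile $\alpha:=\partial p_1(\tilde s)/\partial t_{2m}=\sigma_m-\zeta(u)\abs{u}^2$, and a short manipulation of \eqref{algebraicEqTr2} shows that $\alpha=0$ exactly when $\abs{u}^2=\sigma_1\cdots\sigma_m/\big((\sigma_1+\tau_1)\cdots(\sigma_m+\tau_m)\big)$, which is the dichotomy in the statement. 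I would also record $f(\tilde s)=-\zeta(u)(u\overline{v'}+v\overline u)/\rho$ and, since $\tilde s_{2m}=1$, $q(\tilde s)=\prod_{k=1}^m\tilde s_k^{\,b_k-b_m+1}\prod_{l=1}^{m-1}\tilde s_{m+l}^{\,-(b_l-b_m+1)}e^{\zeta(u)v\overline{v'}/\rho^2}$, which account for the $q$‑ and $e^{-\sqrt N f}$‑factors in both statements.

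Next I would compute the Hessian $A=\big(\partial^2 p_1(\tilde s)/\partial t_k\partial t_j\big)_{1\le k,j\le 2m-1}$. The only coupling between these variables comes through $-g\abs{u}^2 t_{2m}$, and since $\partial_i\ln g=\epsilon_i/t_i$ with $\epsilon_i=+1$ for $i\le m$ and $\epsilon_i=-1$ for $m<i\le 2m-1$, the matrix $A$ is of the ``diagonal plus rank‑one'' type inverted in Appendix~\ref{appendixa}; this gives $\det A$ in the closed form $\pm\big(1-\zeta(u)\abs{u}^2(\sum_{k=1}^{m}\tfrac1{\tilde s_k}-\sum_{l=1}^{m-1}\tfrac1{\tilde s_{m+l}})\big)\prod_{k=1}^{2m-1}\tfrac1{\tilde s_k}$ together with an explicit $A^{-1}$. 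Because $\eta:=\nabla f(\tilde s)$ restricted to the first $2m-1$ entries is a multiple of $(\epsilon_k/\tilde s_k)_k$ (indeed parallel to $\beta$ below), the form $\eta A^{-1}\eta^T$ collapses to the single quadratic term appearing in the exponent of the asserted asymptotics. In the generic case $\alpha\neq0$ I would then invoke Theorem~\ref{laplaceMethod1} (with its ``$m$'' equal to $2m-1$, after translating the singularity to the origin) and read off Statement~(I), the factor $\mathrm{sign}(\sigma_m-\zeta(u)\abs{u}^2)$ being the $\epsilon(\alpha,A)$ of that theorem.

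In the boundary case $\alpha=0$ the extra ingredients are the mixed second derivatives $\beta=\big(\partial^2 p_1(\tilde s)/\partial t_k\partial t_{2m}\big)_k=-\zeta(u)\abs{u}^2(\epsilon_k/\tilde s_k)_k$ and $\alpha'=\partial^2 p_1(\tilde s)/\partial t_{2m}^2=-\sigma_m$, together with $\eta_{2m}=\partial f(\tilde s)/\partial t_{2m}=-\zeta(u)(u\overline{v'}+v\overline u)/\rho$; feeding these into Theorem~\ref{steepestDecentonEdge} produces the error function, and with the explicit $A^{-1}$ one checks that $i\eta_{2m}-i\beta A^{-1}\eta^T$ and $\sqrt{2\alpha'-2\beta A^{-1}\beta^T}$ combine, after cancellation of the purely imaginary factors, into the real argument $\dfrac{\sqrt{\zeta(u)}}{\sqrt{1-\zeta(u)\abs{u}^2(\sum_k 1/\tilde s_k-\sum_l 1/\tilde s_{m+l})}}\cdot\dfrac{u\overline{v'}+v\overline u}{\rho\sqrt2\,\abs{u}}$, i.e.\ Statement~(II). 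Uniformity in $v,v'$ on compact sets is inherited from the uniformity in Theorems~\ref{laplaceMethod1} and~\ref{steepestDecentonEdge} and in Lemma~\ref{twopartslemmavaryingTr}. The only genuine labour, and hence the main obstacle, is the bookkeeping: carrying out the Appendix~\ref{appendixa} inversion without error, simplifying $\eta A^{-1}\eta^T$, $\beta A^{-1}\beta^T$ and $\beta A^{-1}\eta^T$ so that the many $\tilde s_k$'s collapse to the printed forms, and tracking the exponential prefactors $e^{-Np_1(\tilde s)}$ carried through the chain Proposition~$\to$~Lemma~\ref{twopartslemmavaryingTr}; there is no new analytic difficulty beyond what the Ginibre analogue in Lemma~\ref{vSumP1} already required.
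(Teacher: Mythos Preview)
Your proposal is correct and follows essentially the same approach as the paper: you apply Theorem~\ref{laplaceMethod1} in the generic case and Theorem~\ref{steepestDecentonEdge} in the boundary case, with the saddle point $\tilde s$ supplied by Lemma~\ref{twopartslemmavaryingTr}, and you correctly identify all the local data ($\alpha=\sigma_m-\zeta(u)\abs{u}^2$, the rank-one-plus-diagonal Hessian handled via Appendix~\ref{appendixa}, and $\beta,\alpha',\eta_{2m}$ at the edge). The paper's proof proceeds in exactly this way, writing out the partial derivatives and Hessian entries explicitly rather than packaging them through $g(t)$ and the signs $\epsilon_i$, but the substance is identical.
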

     \begin{proof}
      For  $\tilde{s}_{k}$ given in \eqref{tildes-constantsTr}, write  $\tilde{s} = (\tilde{s}_{1}, \cdots, \tilde{s}_{2m})$. Then we know that   the following inequality
       \begin{equation*}
         \Re{p_{1}(t) + \frac{1}{\sqrt{N}}f(t)} > \Re{p_{1}(\tilde{s}) + \frac{1}{\sqrt{N}}f(\tilde{s})}
       \end{equation*}
       holds for all $t \in \gamma_{1} \times \cdots \times \gamma_{2m}$ except for $\tilde{s}$.

       Notice
       \begin{equation*}
         \begin{split}
           \frac{\partial p_{1}(t)}{\partial t_{k}} &= 1 - (\sigma_{k} - \sigma_{m} + \tau_{k}) \frac{1}{t_{k}} - \frac{1}{t_{k}} \frac{t_{1} \cdots t_{m} \abs{u}^{2} t_{2m}}{t_{m+1} \cdots t_{2m-1}}, \quad 1\leq  k \leq  m;\\
           \frac{\partial p_{1}(t)}{\partial t_{m+k}} &= -1 + (\sigma_{k} - \sigma_{m}) \frac{1}{t_{k}} + \frac{1}{t_{k}} \frac{t_{1} \cdots t_{m} \abs{u}^{2} t_{2m}}{t_{m+1} \cdots t_{2m-1}}, \quad 1 \leq k< m;\\
           \frac{\partial p_{1}(t)}{\partial t_{2m}} &= - \frac{t_{1} \cdots t_{m} \abs{u}^{2}}{t_{m+1} \cdots t_{2m-1}} + \sigma_{m}\frac{1}{t_{2m}},
         \end{split}
       \end{equation*}
       we have
       \begin{equation*}
         \frac{\partial p_{1}(\tilde{s})}{\partial t_{k}} = 0,\quad k = 1, \cdots, 2m-1.
       \end{equation*}

       On the other hand, for the Hessian matrix
       \begin{equation}
         A = \paren{\frac{\partial^{2} p_{1}(\tilde{s})}{\partial t_{k} \partial t_{j}}}_{1 \leq k, j \leq 2m-1}
       \end{equation}
       where
       \begin{equation}
         \frac{\partial^{2} p_{1}(\tilde{s})}{\partial t_{k} \partial t_{j}} = \begin{cases}
           - \frac{1}{\tilde{s}_{k} \tilde{s}_{j}} \zeta(u) \abs{u}^{2}, & 1 \leq k \neq j \leq m \text{~or~} m < k \neq j < 2m;\\
           \frac{1}{\tilde{s}_{k} \tilde{s}_{j}} \zeta(u) \abs{u}^{2}, & 1 \leq k \leq m < j < 2m;\\
           \frac{1}{\tilde{s}_{k}} - \frac{1}{\tilde{s}_{k}^{2}} \zeta(u) \abs{u}^{2}, & 1 \leq k = j \leq m;\\
           -\frac{1}{\tilde{s}_{k}} - \frac{1}{\tilde{s}_{k}^{2}} \zeta(u) \abs{u}^{2}, & m < k = j < 2m,
         \end{cases}
       \end{equation}
       simple manipulation gives the determinant
       \begin{equation}
         \Det{A} = (-1)^{m-1} \Big(1 - \zeta(u) \abs{u}^{2} \big(\sum_{k=1}^{m} \frac{1}{\tilde{s}_{k}} - \sum_{l=1}^{m-1} \frac{1}{\tilde{s}_{m+l}}\big)\Big)\prod_{k=1}^{2m-1} \frac{1}{\tilde{s}_{k}}
       \end{equation}
       and the inverse
       \begin{multline}
         A^{-1} = \mathrm{diag}({\tilde{s}_{1}}, \cdots, {\tilde{s}_{m}}, -{\tilde{s}_{m+1}}, \cdots, -{\tilde{s}_{2m-1}}) \\
         + \frac{\zeta(u) \abs{u}^{2}}{ 1 - \zeta(u) \abs{u}^{2} \big(\sum_{k=1}^{m} \,\frac{1}{\tilde{s}_{k}} - \sum_{l=1}^{m-1} \frac{1}{\tilde{s}_{m+l}}\big)} 1_{(2m-1) \times (2m-1)},
       \end{multline}
       where $1_{(2m-1) \times (2m-1)}$ is a $(2m-1) \times (2m-1)$ matrix with all entries $1$.

       Write  $\eta = (\frac{\partial f(\tilde{s})}{\partial t_{1}}, \cdots, \frac{\partial f(\tilde{s})}{\partial t_{2m-1}})$  where
       \begin{equation}
         \frac{\partial f(\tilde{s})}{\partial t_{k}} = \begin{cases}
           -\zeta(u) \frac{u\overline{v'} + v\overline{u}}{\rho} \frac{1}{\tilde{s}_{k}}, &k = 1, \cdots, m, \\
           \zeta(u) \frac{u\overline{v'} + v\overline{u}}{\rho} \frac{1}{\tilde{s}_{k}}, &k = m+1, \cdots, 2m-1,
         \end{cases}
       \end{equation}
       then
       \begin{equation}
         \begin{split}
           \eta A^{-1} \eta^{T} &= \zeta^{2}(u) \frac{(u\overline{v'} + v\overline{u})^{2}}{\rho^{2}} \frac{\sum_{k=1}^{m} \frac{1}{\tilde{s}_{k}} - \sum_{l=1}^{m-1} \frac{1}{\tilde{s}_{m+l}}}{1-\zeta(u)\abs{u}^{2}(\sum_{k=1}^{m} \frac{1}{\tilde{s}_{k}} - \sum_{l=1}^{m-1} \frac{1}{\tilde{s}_{m+l}})}.
         \end{split}
       \end{equation}

       When $\abs{u} \neq \sqrt{\frac{\sigma_{1} \cdots \sigma_{m}}{(\sigma_{1} + \tau_{1}) \cdots (\sigma_{m} + \tau_{m})}}$, let
       \begin{equation}
         \alpha = \frac{\partial p_{1}(\tilde{s})}{\partial t_{2m}} = \sigma_{m} -\zeta(u) \abs{u}^{2},
       \end{equation}
       application of Theorem \ref{laplaceMethod1} gives
       \begin{equation}
         \begin{split}
           \Upsilon &\sim (-1)^{m} i\pi  (2\pi/\lambda)^{(2m-1)/2} q(\tilde{s}) e^{-\lambda p_{1}(\tilde{s})-\sqrt{\lambda} f(\tilde{s})} e^{\frac{1}{2} \eta A^{-1} \eta^{T}}\frac{\mathrm{sign}(\alpha)}{\sqrt{\det{A}}}
         \end{split}
       \end{equation}
       from which Statement \ref{TrStatement1} follows.

       If $\abs{u} = \sqrt{\frac{\sigma_{1} \cdots \sigma_{m}}{(\sigma_{1} + \tau_{1}) \cdots (\sigma_{m} + \tau_{m})}} \neq 0$, we then have
       \begin{equation}
         \frac{\partial p_{1}(\tilde{s})}{\partial t_{k}} = 0, \quad k = 1, \cdots, 2m.
       \end{equation}
       Write        \begin{equation}
         \beta = \zeta(u) \abs{u}^{2} \big(-\frac{1}{\tilde{s}_{1}}, \cdots, -\frac{1}{\tilde{s}_{m}}, \frac{1}{\tilde{s}_{m+1}}, \cdots, \frac{1}{\tilde{s}_{2m-1}}\big),
       \end{equation}
       and
       \begin{equation}
         \alpha' = \frac{\partial^{2} p_{1}(\tilde{s})}{\partial t_{2m}^{2}} = -\sigma_{m}, \qquad \eta_{2m} = \frac{\partial f(\tilde{s})}{\partial t_{2m}} = - \zeta(u) \frac{u\overline{v'} + v\overline{u}}{\rho}.
       \end{equation}
       By Theorem \ref{steepestDecentonEdge}, we obtain
       \begin{equation}
         \begin{split}
           \Upsilon &\sim (-1)^{m-1} (2\pi/\lambda)^{m/2} q(\tilde{s})  e^{-\lambda p(\tilde{s})-\sqrt{\lambda} f(\tilde{s})} e^{\frac{1}{2} \eta A^{-1} \eta^{T}} \frac{i\pi}{\sqrt{\det{A}}}\mathrm{erf}\big( \frac{
        i\eta_{2m} - i\beta A^{-1} \eta^{T}
        }{ \sqrt{2\alpha' - 2\beta A^{-1} \beta^{T}}
        }\big)
         \end{split}
       \end{equation}
       from which Statement \ref{TrStatement2} follows.

       Obviously, the asymptotics holds true uniformly for $v, v'$ in any compact subset of $\mathbb{C}$.
     \end{proof}

     Similar calculations as in the proof of Lemma \ref{TrSumP1} may afford us the asymptotics corresponding to the function $p_{2}(t)$.
     \begin{lem}\label{TrSumP2}
       With the same notation and assumptions as in Lemma \ref{twopartslemmavaryingTr}, let
       \begin{equation}
         \Phi = \mathrm{P.V.} \int_{\gamma_{1} \times \cdots \times \gamma_{2m}} q(t) e^{-\sqrt{N}f(t)} e^{-Np_{2}(t)} \frac{1}{t_{2m} - 1} d^{2m}t,
       \end{equation}
       then the following hold true uniformly for $v, v'$ in any compact subset of $\mathbb{C}$.
       \begin{enumerate}[label = $\mathrm{(\Roman*)}$]
         \item\label{TrStatement3} For $\abs{u} \neq \sqrt{\frac{(1+\sigma_{1}) \cdots (1+\sigma_{m})}{(1+\sigma_{1} + \tau_{1}) \cdots (1+\sigma_{m} + \tau_{m})}}$,
             \begin{equation*}
               \begin{split}
                 \Phi &\sim \frac{- i^{m} \pi  (2\pi/N)^{(2m-1)/2} \,\mathrm{sign}(1 + \sigma_{m} - \zeta(u)\abs{u}^{2})}{\pi \sqrt{\big(1 - \zeta(u) \abs{u}^{2} (\sum_{k=1}^{m} \frac{1}{\tilde{s}_{k}} - \sum_{l=1}^{m-1} \frac{1}{\tilde{s}_{m+l}})\big)\prod_{k=1}^{2m-1} \frac{1}{\tilde{s}_{k}} }}\\
                 &\quad \times \mathrm{exp}\Big\{-N\sum_{k=1}^{m} \big(\tilde{s}_{k} - (\sigma_{k} - \sigma_{m} + \tau_{k}) \ln{\tilde{s}_{k}}\big) + N\zeta(u) \abs{u}^{2} \Big\}\\
                 &\quad \times \mathrm{exp}\Big\{N\sum_{k=1}^{m-1} \big(\tilde{s}_{m+k} - (\sigma_{k} - \sigma_{m}) \ln{\tilde{s}_{m+k}}) - N\sigma_{m}\Big\}\\
                 &\quad \times \mathrm{exp}\Big\{\sqrt{N} \zeta(u) \frac{u\overline{v'} + v\overline{u}}{\rho}\Big\} \prod_{k=1}^{m} \tilde{s}_{k}^{b_{k} - b_{m} + 1} \prod_{l=1}^{m-1} \tilde{s}_{m+l}^{-(b_{l} - b_{m} + 1)} e^{\zeta(u) \frac{v\overline{v'}}{\rho^{2}}} \\
                 &\quad \times \mathrm{exp}\Big\{\frac{1}{2} \zeta^{2}(u) \frac{(u\overline{v'} + v\overline{u})^{2}}{\rho^{2}} \frac{\sum_{k=1}^{m} \frac{1}{\tilde{s}_{k}} - \sum_{l=1}^{m-1} \frac{1}{\tilde{s}_{m+l}}}{1-\zeta(u)\abs{u}^{2}(\sum_{k=1}^{m} \frac{1}{\tilde{s}_{k}} - \sum_{l=1}^{m-1} \frac{1}{\tilde{s}_{m+l}})}\Big\}
               \end{split}
             \end{equation*}
         \item\label{TrStatement4} For $\abs{u} = \sqrt{\frac{(1+\sigma_{1}) \cdots (1+\sigma_{m})}{(1+\sigma_{1} + \tau_{1}) \cdots (1+\sigma_{m} + \tau_{m})}},$
             \begin{equation*}
               \begin{split}
                 \Phi &\sim \frac{i^{m} \pi (2\pi/N)^{(2m-1)/2}}{\pi \sqrt{ \big(1 - \zeta(u) \abs{u}^{2} (\sum_{k=1}^{m} \frac{1}{\tilde{s}_{k}} - \sum_{k=1}^{m-1} \frac{1}{\tilde{s}_{m+k}})\big)\prod_{k=1}^{2m-1} \frac{1}{\tilde{s}_{k}}}}\\
                 &\quad \times \mathrm{exp}\Big\{-N\sum_{k=1}^{m} \big(\tilde{s}_{k} - (\sigma_{k} - \sigma_{m} + \tau_{k}) \ln{\tilde{s}_{k}}\big) + N\zeta(u) \abs{u}^{2} \Big\}\\
                 &\quad \times \mathrm{exp}\Big\{N\sum_{k=1}^{m-1} \big(\tilde{s}_{m+k} - (\sigma_{k} - \sigma_{m}) \ln{\tilde{s}_{m+k}}) - N\sigma_{m}\Big\}\\
                 &\quad \times \mathrm{exp}\Big\{\sqrt{N} \zeta(u) \frac{u\overline{v'} + v\overline{u}}{\rho}\Big\} \prod_{k=1}^{m} \tilde{s}_{k}^{b_{k} - b_{m} + 1} \prod_{l=1}^{m-1} \tilde{s}_{m+l}^{-(b_{l} - b_{m} + 1)} e^{\zeta(u) \frac{v\overline{v'}}{\rho^{2}}} \\
                 &\quad \times \mathrm{exp}\Big\{\frac{1}{2} \zeta^{2}(u) \frac{(u\overline{v'} + v\overline{u})^{2}}{\rho^{2}} \frac{\sum_{k=1}^{m} \frac{1}{\tilde{s}_{k}} - \sum_{l=1}^{m-1} \frac{1}{\tilde{s}_{m+l}}}{1-\zeta(u)\abs{u}^{2}(\sum_{k=1}^{m} \frac{1}{\tilde{s}_{k}} - \sum_{l=1}^{m-1} \frac{1}{\tilde{s}_{m+l}})}\Big\}\\
                 &\quad \times \mathrm{erf}\Big(\frac{\sqrt{\zeta(u)}}{\sqrt{1-\zeta(u)\abs{u}^{2}(\sum_{k=1}^{m} \frac{1}{\tilde{s}_{k}} - \sum_{l=1}^{m-1} \frac{1}{\tilde{s}_{m+l}})}} \frac{u\overline{v'} + v\overline{u}}{\rho \sqrt{2} \abs{u}}\Big).
               \end{split}
             \end{equation*}
       \end{enumerate}
     \end{lem}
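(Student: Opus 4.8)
The plan is to run the argument of Lemma~\ref{TrSumP1} essentially without change, exploiting the single structural difference $p_{2}(t)=p_{1}(t)+\ln t_{2m}$. The extra summand $\ln t_{2m}$ affects none of the derivatives $\partial_{t_{k}}p$ and $\partial_{t_{k}}\partial_{t_{j}}p$ with $k,j\le 2m-1$, so only the partials involving the last variable $t_{2m}$ are altered. First I would invoke Lemma~\ref{twopartslemmavaryingTr}, which already guarantees that $\Re{p_{2}(t)}$ attains its unique minimum over $\gamma_{1}\times\cdots\times\gamma_{2m}$ at $\tilde s=(\tilde s_{1},\ldots,\tilde s_{2m})$ with $\tilde s_{2m}=1$, and which supplies the exponential decay of the integrand away from $\tilde s$; hence Condition~\ref{condition3} of Theorem~\ref{laplaceMethod1} (respectively Condition~\ref{condition3'} of Theorem~\ref{steepestDecentonEdge}) holds, the remaining hypotheses being immediate. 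It then suffices to record the local data of $p_{2}$, $f$, $q$ at $\tilde s$.

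Since $\ln t_{2m}$ is inert in the first $2m-1$ variables, the Hessian $A=\big(\partial^{2}p_{2}(\tilde s)/\partial t_{k}\partial t_{j}\big)_{1\le k,j\le 2m-1}$, its inverse $A^{-1}$, the determinant $\Det{A}$, the vector $\beta=\big(\partial^{2}p_{2}(\tilde s)/\partial t_{1}\partial t_{2m},\ldots,\partial^{2}p_{2}(\tilde s)/\partial t_{2m-1}\partial t_{2m}\big)$, and the quantities $\eta=\nabla_{(t_{1},\ldots,t_{m-1})}f$, $\eta A^{-1}\eta^{T}$ and $\eta_{2m}=\partial f(\tilde s)/\partial t_{2m}$ all agree with those computed in the proof of Lemma~\ref{TrSumP1}. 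The only new quantities are
\[
\alpha:=\frac{\partial p_{2}(\tilde s)}{\partial t_{2m}}=\frac{\partial p_{1}(\tilde s)}{\partial t_{2m}}+1=1+\sigma_{m}-\zeta(u)\abs{u}^{2},\qquad \alpha':=\frac{\partial^{2}p_{2}(\tilde s)}{\partial t_{2m}^{2}}=\frac{\partial^{2}p_{1}(\tilde s)}{\partial t_{2m}^{2}}-1=-\sigma_{m}-1 .
\]
A short manipulation of the defining equation \eqref{algebraicEqTr2} for $\zeta(u)$ shows that $\alpha=0$ is equivalent to $\abs{u}=\sqrt{(1+\sigma_{1})\cdots(1+\sigma_{m})/\big((1+\sigma_{1}+\tau_{1})\cdots(1+\sigma_{m}+\tau_{m})\big)}$, the critical value singled out in Statement~\ref{TrStatement4}; away from it $\alpha\neq0$.

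For Statement~\ref{TrStatement3}, with $\alpha\neq0$ I would apply Theorem~\ref{laplaceMethod1}; the outcome is word for word that of Statement~\ref{TrStatement1} of Lemma~\ref{TrSumP1}, except that the sign factor $\epsilon(\alpha,A)$ becomes $\mathrm{sign}(1+\sigma_{m}-\zeta(u)\abs{u}^{2})$. For Statement~\ref{TrStatement4}, $\alpha=0$ forces the use of Theorem~\ref{steepestDecentonEdge} with the unchanged $A$, $\beta$, $\eta$, $\eta_{2m}$ and with $\alpha'=-\sigma_{m}-1$; at this edge $\zeta(u)\abs{u}^{2}=1+\sigma_{m}$, so $\alpha'=-\zeta(u)\abs{u}^{2}$, exactly as in Statement~\ref{TrStatement2} of Lemma~\ref{TrSumP1}, where $\alpha'=-\sigma_{m}=-\zeta(u)\abs{u}^{2}$. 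Because the error-function argument supplied by Theorem~\ref{steepestDecentonEdge}, namely $\frac{i\eta_{2m}-i\beta A^{-1}\eta^{T}}{\sqrt{2\alpha'-2\beta A^{-1}\beta^{T}}}$, depends on the particular edge only through the common value $\alpha'=-\zeta(u)\abs{u}^{2}$ together with the already-computed $\beta A^{-1}\beta^{T}$, $\beta A^{-1}\eta^{T}$, $\eta_{2m}$, it collapses to the same expression as in Statement~\ref{TrStatement2} of Lemma~\ref{TrSumP1}, which is precisely the erf argument displayed in Statement~\ref{TrStatement4}. Uniformity in $v,v'$ on compact subsets of $\mathbb{C}$ is inherited, as before, from the uniformity in Theorems~\ref{laplaceMethod1} and~\ref{steepestDecentonEdge}.

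I expect the only non-routine point to be that last collapse of the error-function argument: using the explicit diagonal-plus-rank-one form of $A^{-1}$ recorded in the proof of Lemma~\ref{TrSumP1} (cf.\ Appendix~\ref{appendixa}), one has to check that $2\alpha'-2\beta A^{-1}\beta^{T}$ reduces to $-2\zeta(u)\abs{u}^{2}/\big(1-\zeta(u)\abs{u}^{2}(\sum_{k=1}^{m}\tilde s_{k}^{-1}-\sum_{l=1}^{m-1}\tilde s_{m+l}^{-1})\big)$ and that $i\eta_{2m}-i\beta A^{-1}\eta^{T}$ reduces to $i\zeta(u)(u\overline{v'}+v\overline{u})/\big(\rho\,(1-\zeta(u)\abs{u}^{2}(\sum_{k=1}^{m}\tilde s_{k}^{-1}-\sum_{l=1}^{m-1}\tilde s_{m+l}^{-1}))\big)$; taking the ratio then yields the stated erf argument, and the surviving prefactors reproduce, mutatis mutandis, those of Lemma~\ref{TrSumP1}, the rest being bookkeeping.
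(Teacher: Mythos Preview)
Your proposal is correct and matches the paper's approach exactly: the paper simply states that ``similar calculations as in the proof of Lemma~\ref{TrSumP1}'' yield the result, and you have spelled out precisely those calculations, correctly identifying that only $\alpha=\partial p_{2}(\tilde s)/\partial t_{2m}=1+\sigma_{m}-\zeta(u)\abs{u}^{2}$ and $\alpha'=\partial^{2}p_{2}(\tilde s)/\partial t_{2m}^{2}=-(1+\sigma_{m})$ change. One small slip: you wrote $\eta=\nabla_{(t_{1},\ldots,t_{m-1})}f$ where you meant $\nabla_{(t_{1},\ldots,t_{2m-1})}f$.
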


    \subsection{Scaling limits}
      Now, we are ready to prove the limiting eigenvalue density and local universality for products of truncated unitary matrices.

      Notice that  when  $0<\abs{u} <1$   for  the solution $\xi(u)$ of the algebraic equation (\ref{algebraicEqTr1}) and the solution $\zeta(u)$ of  (\ref{algebraicEqTr2})  we have the following relation
      \begin{equation}
        \zeta(u) = \frac{\tau_{m}}{\xi(u) - \abs{u}^{2}}.
      \end{equation}
     In this case
      \begin{equation}
        s_{k} = \sqrt{\tilde{s}_{m+k}/\tilde{s}_{k}}, \quad k =1, \cdots, m-1,
      \end{equation}
      where $s_{k}$ and $\tilde{s}_{k}$ are  defined respectively by (\ref{s-constantsTr}) and (\ref{tildes-constantsTr}).

      In comparison with the limiting density  in \cite[eq.(3.12)]{akemann2014universal} where all $\sigma_{k}, b_{k}$ are zero and all $\tau_{k}$ are equal, we have a more general  result.
      \begin{thm} \label{limitdensityTr}
        For the weight function $w_{m}(z)$   given by (\ref{weightTr}), suppose that $$\sigma_{k}N + b_{k}>-1, \quad \sigma_{k}\geq 0 \ \ \mbox{and}  \quad \tau_{k}> 0 \ \ \mbox{for}\  k=1, \ldots, m.$$
        For $z\neq 0$, let $\zeta_{m}(z)$ be the largest real root of   algebraic  equation in $x$
        \begin{equation}
          \abs{z}^{2} \prod_{k=1}^{m} \big(\sigma_{k} - \sigma_{m} + \tau_{k} + x\abs{z}^{2}\big) - \prod_{k=1}^{m} \big(\sigma_{k} - \sigma_{m} + x\abs{z}^{2}\big) = 0.
        \end{equation} Then  the limiting  eigenvalue density
        \begin{equation}
          \begin{split}
            R_{1}(z) &:= \lim_{N \to \infty} \frac{1}{N}R_{N,1}(z)\\
            &= \frac{1}{\pi \abs{z}^{2}} \frac{1}{\sum_{k=1}^{m}\big(\frac{1}{ \sigma_{k} - \sigma_{m}   + \abs{z}^{2}\zeta(z)} - \frac{1}{ \sigma_{k} - \sigma_{m} + \tau_{k} + \abs{z}^{2}\zeta(z)}\big)} \\
            &  \times \chi_{\set{u:\sqrt{\frac{\sigma_{1} \cdots \sigma_{m}}{(\sigma_{1} + \tau_{1}) \cdots (\sigma_{m} + \tau_{m})}} < \abs{u} < \sqrt{\frac{(1+\sigma_{1}) \cdots (1+\sigma_{m})}{(1+\sigma_{1} + \tau_{1}) \cdots (1+\sigma_{m} + \tau_{m})}}}}(z)
          \end{split}
        \end{equation}
       holds true for any complex number $z$ such that   $0<\abs{z}<1$.
     \end{thm}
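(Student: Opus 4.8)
The plan is to follow the proof of Theorem \ref{limitdensityvarying} verbatim in structure. By \eqref{kernelfortruncated} and \eqref{correlationfunctionfortruncated}, $R_{N,1}(z) = K_N(z,z) = w_m(z)\,T_N(z,z)$, so I would multiply the large-$N$ asymptotics of the weight function and of the truncated series at the coincident unscaled point $z' = z = u$ and track the cancellations. First I would put $v = 0$ in Lemma \ref{TrWeight}: the Gaussian and error-function factors trivialize, leaving the leading term of $w_m(u)$, which carries the Gamma prefactor $\prod_{k=1}^m\Gamma((\sigma_k+\tau_k)N+b_k+2)/(\Gamma(\sigma_k N+b_k+1)\Gamma(\tau_k N+1))$, the power $\abs{u}^{2(\sigma_m N+b_m)}$, the factor $(2\pi/N)^{(m-1)/2}$, an explicit square-root built from the Hessian of \eqref{pforWeightTr} at $s$ (see \eqref{s-constantsTr}), and the $O(N)$-exponential $\exp\{N\tau_m\ln(1-\abs{u}^2/\xi(u)) + N\sum_{k=1}^{m-1}(2(\sigma_k-\sigma_m)\ln s_k + \tau_k\ln(1-s_k^2))\}$.

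Next I would put $v = v' = 0$ in Lemma \ref{twopartslemmavaryingTr}: the exponentially small remainder in \eqref{decayandleadingvaryingTr} is negligible, and the integral over $\gamma_1\times\cdots\times\gamma_{2m}$ is $\Upsilon - \Phi$, which I would evaluate via Lemmas \ref{TrSumP1} and \ref{TrSumP2}. After pulling out the prefactor $N^{2m+\sum_k\tau_k}\abs{u}^{-2(\sigma_m N+b_m)}(2\pi i)^{-m}\prod_k\Gamma(\sigma_k N+b_k+1)/\Gamma((\sigma_k+\tau_k)N+b_k+2)$, the leading term of $T_N(u,u)$ carries $(2\pi/N)^{(2m-1)/2}$, a square-root built from the Hessian of \eqref{primativep1Tr} at $\tilde s$, and the $O(N)$-exponential $\exp\{-N\sum_{k=1}^m(\tilde s_k - (\sigma_k-\sigma_m+\tau_k)\ln\tilde s_k) + N\sum_{k=1}^{m-1}(\tilde s_{m+k} - (\sigma_k-\sigma_m)\ln\tilde s_{m+k}) + N\zeta(u)\abs{u}^2 - N\sigma_m\}$, with $\tilde s_k$ as in \eqref{tildes-constantsTr}. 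The factor $\Upsilon - \Phi$ then produces the characteristic function: away from the two critical circles the contributions come with $\mathrm{sign}(\sigma_m - \zeta(u)\abs{u}^2)$ and $\mathrm{sign}(1 + \sigma_m - \zeta(u)\abs{u}^2)$ (Statements \ref{TrStatement1}, \ref{TrStatement3}), whose combination is nonzero exactly on the annulus $\sqrt{\sigma_1\cdots\sigma_m/((\sigma_1+\tau_1)\cdots(\sigma_m+\tau_m))} < \abs{u} < \sqrt{(1+\sigma_1)\cdots(1+\sigma_m)/((1+\sigma_1+\tau_1)\cdots(1+\sigma_m+\tau_m))}$ (using \eqref{algebraicEqTr2}); on the two boundary circles I would instead invoke Statements \ref{TrStatement2}, \ref{TrStatement4} with $v=v'=0$, where $\mathrm{erf}(0)=0$ kills one contribution and leaves half-weight at the other. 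This reproduces the factor $\chi_{\{\cdots\}}$.

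Finally I would multiply the two asymptotics and verify the cancellations. The Gamma prefactors cancel except for $\prod_k\Gamma(\tau_k N+1)^{-1}$, which combines with $N^{2m+\sum_k\tau_k}$ via Stirling; $\abs{u}^{2(\sigma_m N+b_m)}$ cancels $\abs{u}^{-2(\sigma_m N+b_m)}$; the units $(2\pi i)^{-m}$ and the $i^m$ of Statements \ref{TrStatement1}, \ref{TrStatement3} combine to a positive real constant; and a net power $N^{1}$ survives, matching the normalization $\frac1N R_{N,1}(z)$. Crucially, all $O(N)$-exponentials must cancel: using \eqref{algebraicEqTr2} for $\zeta(u)$, the relation $\zeta(u) = \tau_m/(\xi(u)-\abs{u}^2)$, and $s_k^2 = \tilde s_{m+k}/\tilde s_k$ (both valid for $0 < \abs{u} < 1$), the $O(N)$ exponents from $w_m$ and from $T_N$, together with the Stirling term $-N\sum_k\tau_k\ln(\tau_k N) + N\sum_k\tau_k$, should sum to zero. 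The two square-root (Hessian) factors then combine, again via $s_k^2 = \tilde s_{m+k}/\tilde s_k$ and $\zeta = \tau_m/(\xi-\abs{u}^2)$, into $\big(\pi\abs{z}^2\sum_{k=1}^m(\frac1{\sigma_k-\sigma_m+\abs{z}^2\zeta(z)} - \frac1{\sigma_k-\sigma_m+\tau_k+\abs{z}^2\zeta(z)})\big)^{-1}$; multiplying by the $\chi$-factor and setting $z'=z=u$ gives the claim.

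I expect the main obstacle to be the exact cancellation of the $O(N)$ exponential factors in the last step — the ``free energy'' identity equating the exponential rate from the saddle point of the weight integral with minus the rate from the saddle point of the truncated-sum integral. The ingredients for it are precisely the two algebraic equations \eqref{algebraicEqTr1}, \eqref{algebraicEqTr2} and the relations $\zeta = \tau_m/(\xi-\abs{u}^2)$, $s_k^2 = \tilde s_{m+k}/\tilde s_k$ recorded just before the statement, but the algebra is heavy; a secondary nuisance is pinning down the overall sign and the boundary value $1/2$, which needs care with the orientations of the contours $\gamma_k$ and with the positivity of $1 - \zeta(u)\abs{u}^2(\sum_k\frac1{\tilde s_k} - \sum_l\frac1{\tilde s_{m+l}})$, which equals $\zeta(u)\abs{u}^2$ times the density denominator.
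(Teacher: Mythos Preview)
Your proposal is correct and follows essentially the same approach as the paper's own proof: set $v=0$ in Lemma~\ref{TrWeight}, set $v=v'=0$ in Lemmas~\ref{twopartslemmavaryingTr}, \ref{TrSumP1}, \ref{TrSumP2}, and multiply, using $R_{N,1}=w_m\,T_N$. In fact you spell out the cancellation of the $O(N)$ exponents and the emergence of the $\chi$-factor in more detail than the paper does; the paper simply records the two asymptotic formulas \eqref{weightTr2} and \eqref{partialSumTr} and states that their product yields the result.
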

     \begin{proof}
       Taking $v = 0$ in Lemma \ref{TrWeight}, we have
         \begin{align} \label{weightTr2}
          & w_{m}(z)  \sim  \Big(\frac{2\pi}{N}\Big)^{(m-1)/2} \prod_{k=1}^{m} \frac{\Gamma((\sigma_{k} + \tau_{k})N + b_{k} + 2)}{\Gamma(\sigma_{k} N + b_{k} + 1) \Gamma(\tau_{k}N + 1)}\, \abs{z}^{2(\sigma_{m}N + b_{m})}   \nonumber  \\
           &  \times   \frac{1}{\pi \sqrt{ \Big(1 + \frac{\tau_{m} \abs{u}^{2} \xi(u)}{(\xi(u) - \abs{u}^{2})^{2}}  \sum_{k=1}^{m-1} \frac{(1-s_{k}^{2})^{2}}{\tau_{k} s_{k}^{2} }\Big)\prod_{k=1}^{m-1} \frac{\tau_{k}}{(1-s_{k}^{2})^{2}}}} \prod_{k=1}^{m-1} s_{k}^{2(b_{k} - b_{m})-1} \nonumber\\
           & \times \mathrm{exp}\Big\{ N\sum_{k=1}^{m-1} \big(2(\sigma_{k} - \sigma_{m})\ln{s_{k}} + \tau_{k} \ln(1-s_{k}^{2})\big)+N\tau_{m} \ln{\big(1-\frac{\abs{u}^{2}}{\xi(u)}\big)}\Big\}.
         \end{align}

       Taking $v, v' = 0$ in Lemmas \ref{twopartslemmavaryingTr}, \ref{TrSumP1} and \ref{TrSumP2}, we get the large $N$ asymptotics of (\ref{decayandleadingvaryingTr})
       \begin{align}\label{partialSumTr}
           T_{N}(z,z) &\sim N^{2m + N\sum_{k=1}^{m} \tau_{k}} \abs{z}^{-2(\sigma_{m}N + b_{m})} (2\pi)^{-m} \prod_{k=1}^{m} \frac{\Gamma(\sigma_{k}N + b_{k} + 1)}{\Gamma(\sigma_{k} N + \tau_{k} N + b_{k} + 2)} \nonumber\\
           & \times \frac{\pi  (2\pi/N)^{(2m-1)/2}}{\sqrt{\big(1 - \zeta(u) \abs{u}^{2} (\sum_{k=1}^{m} \frac{1}{\tilde{s}_{k}} - \sum_{l=1}^{m-1} \frac{1}{\tilde{s}_{m+l}})\big)\prod_{k=1}^{2m-1} \frac{1}{\tilde{s}_{k}} }}\nonumber\\
           &  \times \mathrm{exp}\Big\{-N\sum_{k=1}^{m} \big(\tilde{s}_{k} - (\sigma_{k} - \sigma_{m} + \tau_{k}) \ln{\tilde{s}_{k}}\big) + N\zeta(u) \abs{u}^{2}\Big\}\nonumber\\
           & \times \mathrm{exp}\Big\{N\sum_{k=1}^{m-1} \big(\tilde{s}_{m+k} - (\sigma_{k} - \sigma_{m}) \ln{\tilde{s}_{m+k}}\big)\Big\}\,\prod_{k=1}^{m} \tilde{s}_{k}^{b_{k} - b_{m} + 1} \prod_{l=1}^{m-1} \tilde{s}_{m+l}^{-(b_{l} - b_{m} + 1)}\nonumber\\
           &  \times \chi_{\set{y:\sqrt{\frac{\sigma_{1} \cdots \sigma_{m}}{(\sigma_{1} + \tau_{1}) \cdots (\sigma_{m} + \tau_{m})}} < \abs{y} < \sqrt{\frac{(1+\sigma_{1}) \cdots (1+\sigma_{m})}{(1+\sigma_{1} + \tau_{1}) \cdots (1+\sigma_{m} + \tau_{m})}}}}(u).
               \end{align}

       Noting $z = u$, since $R_{N,1}(z) = w_{m}(z) T_{N}(z,z)$, combination of (\ref{weightTr2}) and  (\ref{partialSumTr})  completes the proof.
     \end{proof}

     \begin{thm} \label{univeralitytheoremTr}
       With the same notation as in Theorem \ref{limitdensityTr}, let
       \begin{equation}
         \rho = \frac{1}{  \abs{z}} \frac{1}{ \sqrt{ \sum_{k=1}^{m}\big(\frac{1}{ \sigma_{k} - \sigma_{m}   + \abs{u}^{2}\zeta(u)} - \frac{1}{ \sigma_{k} - \sigma_{m} + \tau_{k} + \abs{u}^{2}\zeta(u)}\big)}}
       \end{equation}
       and introduce rescaling variables
       \begin{equation}
         z_{k} = u + \frac{v_{k}}{\rho \sqrt{N}}, \quad k=1, \cdots, n.
       \end{equation}
       Then  the following    hold true uniformly for  $v_{1}, \cdots, v_{n}$ in any compact subset of $\mathbb{C}$.
       \begin{enumerate} [label = $\mathrm{(\arabic*)}$]
         \item \textbf{Bulk limit}.   \label{TrScalingLimitBulk} For  $\sqrt{\frac{\sigma_{1} \cdots \sigma_{m}}{(\sigma_{1} + \tau_{1}) \cdots (\sigma_{m} + \tau_{m})}} < \abs{u} < \sqrt{\frac{(1+\sigma_{1}) \cdots (1+\sigma_{m})}{(1+\sigma_{1} + \tau_{1}) \cdots (1+\sigma_{m} + \tau_{m})}}$,
             \begin{multline}
               \lim_{N \to \infty} N^{-n} \rho^{-2n} R_{N,n}\big(z_1, \ldots,z_n\big) \\= \Det{\frac{1}{\pi}e^{-\frac{1}{2} (\abs{v_{k}}^{2} + \abs{v_{j}}^{2} - 2v_{k}\overline{v_{j}})}}_{1\leq k, j \leq n}.
             \end{multline}
         \item  \textbf{Inner edge}. For  $\delta_{1}, \ldots, \delta_{m}>0$ and $u = \sqrt{\frac{\sigma_{1} \cdots \sigma_{m}}{(\sigma_{1} + \tau_{1}) \cdots (\sigma_{m} + \tau_{m})}}e^{i\phi}$ with $0\leq \phi<2\pi$,
             \begin{multline}
               \lim_{N \to \infty}N^{-n} \rho^{-2n} R_{N,n}\big(z_1, \ldots,z_n\big) \\
               = \Det{\frac{1}{2\pi}e^{-\frac{1}{2} (\abs{v_{k}}^{2} + \abs{v_{j}}^{2} - 2v_{k}\overline{v_{j}})} \mathrm{erfc}(-\frac{e^{i\phi}\overline{v_{j}} + v_{k} e^{-i\phi}}{\sqrt{2} })}_{1\leq k, j \leq n}.
             \end{multline}
         \item  \textbf{Outer   edge}. For  $u = \sqrt{\frac{(1+\sigma_{1}) \cdots (1+\sigma_{m})}{(1+\sigma_{1} + \tau_{1}) \cdots (1+\sigma_{m} + \tau_{m})}} e^{i\phi}$ with $0\leq \phi<2\pi$,
             \begin{multline}
               \lim_{N \to \infty} N^{-n} \rho^{-2n} R_{N,n}\big(z_1, \ldots,z_n\big) \\
               = \Det{\frac{1}{2\pi}e^{-\frac{1}{2} (\abs{v_{k}}^{2} + \abs{v_{j}}^{2} - 2v_{k}\overline{v_{j}})} \mathrm{erfc}(\frac{e^{i\phi}\overline{v_{j}} + v_{k} e^{-i\phi}}{\sqrt{2} })}_{1\leq k, j \leq n}.
             \end{multline}
       \end{enumerate}
      \end{thm}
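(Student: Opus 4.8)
The plan is to follow verbatim the scheme used to prove Theorem \ref{univeralitytheoremvarying}, now feeding in the truncated-unitary analogues of the three basic estimates: Lemma \ref{TrWeight} for the weight function $w_m$, Lemma \ref{twopartslemmavaryingTr} for the decomposition of $T_N$ into an exponentially negligible remainder plus a principal value integral equal to $\Upsilon-\Phi$ up to an explicit prefactor, and Lemmas \ref{TrSumP1} and \ref{TrSumP2} for the large-$N$ asymptotics of $\Upsilon$ and $\Phi$. First I would fix $u$ with $0<|u|<1$ in the relevant range, insert the rescaling $z_k = u + v_k/(\rho\sqrt N)$, and assemble the asymptotics of $K_N(z_k,z_j) = \sqrt{w_m(z_k)w_m(\overline{z_j})}\,T_N(z_k,z_j)$ from \eqref{kernelfortruncated}. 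Using the relations $\zeta(u) = \tau_m/(\xi(u)-|u|^2)$ and $s_k^2 = \tilde s_{m+k}/\tilde s_k$ recorded just above Theorem \ref{limitdensityTr}, all the $N$-exponential and $\sqrt N$-exponential factors produced by the weight (Lemma \ref{TrWeight}) cancel against those produced by the series (Lemma \ref{twopartslemmavaryingTr} combined with the leading behaviour in Lemmas \ref{TrSumP1}, \ref{TrSumP2}), and the algebraic prefactors $\sqrt{w_m(z_k)w_m(\overline{z_j})}$ combine with the $\Gamma$-quotients of $T_N$ to leave a quantity of order $N\rho^2$ times a function of $v_k$ and $\overline{v_j}$.

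Next I would introduce a diagonal gauge matrix $D = \mathrm{diag}(\psi_N(v_1),\dots,\psi_N(v_n))$, with $\psi_N$ chosen exactly as in the proof of Theorem \ref{univeralitytheoremvarying}: an exponential of the terms of the form $\sqrt N(u\overline v - v\overline u)$ and $(u\overline v)^2-(v\overline u)^2$ that are anti-Hermitian under $(v_k,v_j)\mapsto(v_j,v_k)$ and hence invisible to the determinant. After this conjugation, the surviving kernel in the bulk is
\[
\psi_N(v_k)\, K_N(z_k,z_j)\, \psi_N(v_j)^{-1} \sim N\rho^2 \cdot \frac{1}{\pi}\, e^{-\frac{1}{2}(|v_k|^2 + |v_j|^2 - 2 v_k\overline{v_j})},
\]
where the precise choice of $\rho$ in the statement is forced so that the quadratic form in the exponent carries coefficient $\tfrac12$. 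Since $R_{N,n}(z_1,\dots,z_n) = \det[D K_N(z_k,z_j) D^{-1}]$ by \eqref{correlationfunctionfortruncated} and $\det D\cdot\det D^{-1}=1$, the bulk limit follows, with uniformity on compact $v$-sets inherited from the uniformity already asserted in the cited lemmas.

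For the inner edge $u = \sqrt{\sigma_1\cdots\sigma_m/((\sigma_1+\tau_1)\cdots(\sigma_m+\tau_m))}\,e^{i\phi}$ I would use Statement \ref{TrStatement2} of Lemma \ref{TrSumP1} (the case in which an extra $\mathrm{erf}$ factor survives) together with Statement \ref{TrStatement3} of Lemma \ref{TrSumP2}; for the outer edge $u = \sqrt{(1+\sigma_1)\cdots(1+\sigma_m)/((1+\sigma_1+\tau_1)\cdots(1+\sigma_m+\tau_m))}\,e^{i\phi}$ I would symmetrically use Statement \ref{TrStatement1} of Lemma \ref{TrSumP1} with Statement \ref{TrStatement4} of Lemma \ref{TrSumP2}. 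In each case the combination $\Upsilon-\Phi$ turns the constant term and the $\mathrm{erf}$ term into a complementary error function, $1-\mathrm{erf}(x)=\mathrm{erfc}(x)$ at the outer edge and $1+\mathrm{erf}(x)=\mathrm{erfc}(-x)$ at the inner edge, with argument $(e^{i\phi}\overline{v_j}+e^{-i\phi}v_k)/\sqrt2$ after substituting the value of $\rho$ at that edge; the remaining Gaussian factor and the gauge terms behave exactly as in the bulk, and the same determinantal argument yields the two edge limits.

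The main obstacle will be purely computational bookkeeping: one must check that the separate $N$- and $\sqrt N$-exponentials coming from the weight and from the two pieces $\Upsilon,\Phi$ of the series annihilate each other after invoking $\zeta(u)=\tau_m/(\xi(u)-|u|^2)$ and $s_k^2=\tilde s_{m+k}/\tilde s_k$, that the products of $\Gamma$-quotients and power prefactors collapse to precisely $N\rho^2/\pi$ (or $N\rho^2/2\pi$ at the edges), and that the various quadratic-in-$v$ exponents recombine — after the gauge conjugation by $\psi_N$ — into $-\tfrac12(|v_k|^2+|v_j|^2-2v_k\overline{v_j})$. None of this is conceptually new beyond Section \ref{varyingginibre}, but the truncated-unitary formulas are considerably bulkier, so the algebraic identities among $\xi(u)$, $\zeta(u)$, the $s_k$ and the $\tilde s_k$ must be exploited systematically to keep the cancellations transparent.
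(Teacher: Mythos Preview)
Your proposal is correct and follows essentially the same route as the paper: define the gauge $\psi_N$, combine Lemma \ref{TrWeight} with Lemma \ref{twopartslemmavaryingTr} and the appropriate statements of Lemmas \ref{TrSumP1} and \ref{TrSumP2} (I/III in the bulk, II/III at the inner edge, I/IV at the outer edge), and read off the determinantal limit after conjugation by $D$. Your write-up is in fact more explicit than the paper's own proof, which merely names the lemmas in the bulk case and dismisses the edge cases with ``Likewise''; your identification of the $1\pm\mathrm{erf}=\mathrm{erfc}(\mp\,\cdot\,)$ mechanism and of the algebraic relations $\zeta(u)=\tau_m/(\xi(u)-|u|^2)$, $s_k^2=\tilde s_{m+k}/\tilde s_k$ as the engine of the cancellations is exactly right.
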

      \begin{proof}
        Let
        \begin{equation}
          \begin{split}
            \psi_{N}(v) &= \mathrm{exp}\Big\{\frac{1}{4}\zeta^{2}(u)\frac{(v\overline{u})^{2}-(u\overline{v})^{2}}{\rho^{2}} \frac{\sum_{k=1}^{m}\frac{1}{\tilde{s}_{k}} - \sum_{l=1}^{m-1}\frac{1}{\tilde{s}_{m+l}}}{1-\zeta(u)\abs{u}^{2}\big(\sum_{k=1}^{m}\frac{1}{\tilde{s}_{k}} - \sum_{l=1}^{m-1}\frac{1}{\tilde{s}_{m+l}}\big)}\Big\} \\
            &\quad \times \mathrm{exp}\Big\{\frac{1}{2}\sqrt{N}\zeta(u)\frac{v\overline{u}-u\overline{v}}{\rho} + \sigma_{m} \sqrt{N} \frac{u\overline{v} - v\overline{u}}{2\abs{u}^{2} \rho} -  \sigma_{m} \frac{(u\overline{v})^{2} - (v\overline{u})^{2}}{4\abs{u}^{4} \rho^{2}}\Big\}
          \end{split}
        \end{equation}
        and let $D = \mathrm{diag}(\psi_{N}(v_{1}), \cdots, \psi_{N}(v_{n}))$.

        In the bulk,    combining    Lemma \ref{TrWeight}, Lemma \ref{twopartslemmavaryingTr}, Statement \ref{TrStatement1} in Lemma \ref{TrSumP1} and  \ref{TrStatement3} in Lemma \ref{TrSumP2}, we obtain
        \begin{equation*}
               \psi_{N}(v_{k}) K_{N}(z_{k},z_{j}) \psi_{N}^{-1}(v_{j})          \sim N \frac{1}{\pi} \rho^{2}
            e^{-\frac{1}{2} (\abs{v_{k}}^{2} + \abs{v_{j}}^{2} - 2v_{k}\overline{v_{j}})}.
        \end{equation*}
        Furthermore,
        \begin{equation*}
          \begin{split}
            R_{N,n}\big(z_1, \ldots,z_n\big) &= \Det{D(K_{N}(z_{k}, z_{j}))D^{-1}}\\
            &\sim N^{n} {\rho^{2n}} \det\Big(\frac{1}{\pi} e^{-\frac{1}{2} (\abs{v_{k}}^{2} + \abs{v_{j}}^{2} - 2v_{k}\overline{v_{j}})}\Big)_{1 \leq k, j \leq n}.
          \end{split}
        \end{equation*}
        This completes the bulk limit  of the theorem.

        Likewise, combining    Lemmas \ref{TrWeight},   \ref{twopartslemmavaryingTr},  \ref{TrSumP1} and    \ref{TrSumP2} we can prove the inner and outer edge cases.
      \end{proof}

      \begin{rem}
        Although we assume that all $\tau_{k} > 0$ in this section, our conclusions remain valid under the condition that at least one of the $\tau_{k}$'s is positive. Indeed, in the latter case the asymptotics of truncated series remains true and   asymptotics of   weight functions almost corresponds to the case of those  $\tau_{k} > 0$.
      \end{rem}

\begin{acknow}
  The work of D.-Z.~Liu  was  supported by the National Natural Science Foundation of China (Grants 11301499 and 11171005), and by the Fundamental Research Funds for the Central Universities (Grant WK0010000048). The first named author  would like to thank Gernot Akemann for  very helpful discussions on the first draft.
\end{acknow}

\begin{appendix}   \section { Determinant and inverse of  one matrix} \label{appendixa}
When calculating the Hessian matrix and the inverse   we frequently encounter  one   type of matrix  like
\begin{align}
        A :=A(a_1,\ldots,a_n) =
        \begin{pmatrix}
          1 + a_{1}  &1& \cdots & 1\\
         1   &1 + a_2& \cdots & 1\\
         \vdots& \vdots & \ddots & \vdots \\
          1 &1& \cdots & 1 + a_n
        \end{pmatrix}.
      \end{align}
The determinant reads off \begin{equation}\Det{A}=a_{1}a_2\cdots a_n \Big(1+\sum_{k=1}^{n}\frac{1}{a_k}\Big). \end{equation} Moreover, if $a_{1}a_2\cdots a_n\neq 0$ and also $1+\sum_{k=1}^{n}\frac{1}{a_k}\neq 0$, then the inverse $A^{-1}$ is equal to

  \begin{align}
    \frac{1}{1+\displaystyle\sum_{k = 1}^{n} \frac{ 1}{a_{k} }}
          \begin{pmatrix}
          \frac{1}{a_1} + \displaystyle\sum_{k \neq 1} \frac{1}{a_1 a_{k} } &- \frac{1}{a_{1}a_{2}}& \cdots & - \frac{1}{a_{1}a_{n}}\\
         - \frac{1}{a_{2}a_{1}}   & \frac{1}{a_2} + \displaystyle\sum_{k \neq 2} \frac{ 1}{a_2 a_{k} }& \cdots & - \frac{1}{a_{2}a_{n}}\\
         \vdots& \vdots & \ddots & \vdots \\
         - \frac{1}{a_{n}a_{1}} &- \frac{1}{a_{n}a_{2}}& \cdots &  \frac{1}{a_n} + \displaystyle\sum_{k \neq n} \frac{ 1}{a_n a_{k}}
        \end{pmatrix}.
      \end{align}
Equivalently,
\begin{equation}
         A^{-1} = -\frac{1}{1+s}\mathrm{diag}\,\big( \frac{1}{a_1}, \cdots, \frac{1}{a_n}\big)  A\big(- a_1+  a_{1}s,\ldots,-a_n+  a_{n}s\big)\,\mathrm{diag}\big( \frac{1}{a_1}, \cdots, \frac{1}{a_n}\big)
       \end{equation}
where \begin{equation}
         s =  \sum_{k=1}^{n}\frac{1}{a_k}.
       \end{equation}

\end{appendix}


\end{document}